\def\setliststart#1{\setcounter{\@listctr}{#1}%
  \addtocounter{\@listctr}{-1}}
\newtheorem{The}{Theorem}[section]
\newtheorem{Cor}[The]{Corollary}
\newtheorem{Lem}[The]{Lemma}
\newtheorem{Pro}[The]{Proposition}
\theoremstyle{definition}
\newtheorem{defn}[The]{Definition}
\theoremstyle{remark}
\newtheorem{Rem}[The]{Remark}
\newtheorem{ex}[The]{Example}
\numberwithin{equation}{section}
\newcommand{\R}{\mathbb{R}}
\newcommand{\N}{\mathbb{N}}
\def\moverlay{\mathpalette\mov@rlay}
\def\mov@rlay#1#2{\leavevmode\vtop{%
   \baselineskip\z@skip \lineskiplimit-\maxdimen
   \ialign{\hfil$\m@th#1##$\hfil\cr#2\crcr}}}
\newcommand{\charfusion}[3][\mathord]{
    #1{\ifx#1\mathop\vphantom{#2}\fi
        \mathpalette\mov@rlay{#2\cr#3}
      }
    \ifx#1\mathop\expandafter\displaylimits\fi}
\newcommand{\cupdot}{\charfusion[\mathbin]{\cup}{\cdot}}
\title[Herglotz' variational principle]{Herglotz' variational principle and Lax-Oleinik evolution}
\author{Piermarco Cannarsa \and Wei Cheng \and Liang Jin \and Kaizhi Wang \and Jun Yan}
\address{Dipartimento di Matematica, Universit\`a di Roma ``Tor Vergata'', Via della Ricerca Scientifica 1, 00133 Roma, Italy}
\email{cannarsa@mat.uniroma2.it}
\address{Department of Mathematics, Nanjing University, Nanjing 210093, China}
\email{chengwei@nju.edu.cn}
\address{Department of Applied Mathematics, Nanjing University of Science and Technology, Nanjing 210094, China}
\email{jl@njust.edu.cn}
\address{School of Mathematical Sciences, Shanghai Jiao Tong University, Shanghai 200240, China}
\email{kzwang@sjtu.edu.cn}
\address{School of Mathematical Sciences, Fudan University and Shanghai Key Laboratory for Contemporary Applied Mathematics, Shanghai 200433, China}
\email{yanjun@fudan.edu.cn}
\date{\today}
\subjclass[2010]{35F21, 49L25, 37J50}
\keywords{Herglotz' variational principle, Hamilton-Jacobi equation, viscosity solution.}
\begin{document}

\begin{abstract}
	We develop an elementary method to give a Lipschitz estimate for the minimizers in the problem of Herglotz' variational principle proposed in \cite{CCWY2018} in the time-dependent case. We deduce Erdmann's condition and the Euler-Lagrange equation separately under different sets of assumptions, by using a generalized du Bois-Reymond lemma. As an application, we obtain a representation formula for the viscosity solution of the Cauchy problem for the Hamilton-Jacobi equation
	\begin{align*}
		D_tu(t,x)+H(t,x,D_xu(t,x),u(t,x))=0
	\end{align*}
	and study the related Lax-Oleinik evolution.
\end{abstract}

\maketitle


\section{Introduction}

\subsection{Introduction}

A basic problem of calculus of variations is to minimize the action functional
\begin{align*}
	\int^b_aL(s,\xi(s),\dot{\xi}(s))\ ds
\end{align*}
over the set of absolutely continuous curves $\xi$ connecting two points $x,y\in\R^n$. It has been studied now for almost three hundred years. Beyond the issue of the existence of minimizers, much of the attention in the calculus of variations has been devoted to necessary conditions for optimality. Another essential point of the analysis is the Lipschitz regularity of minimizers. This property has many applications, for instance to Euler-Lagrange equations, where it can be used to exclude the Lavrentiev phenomenon (see, for instance, \cite{Buttazzo_Belloni1995} for a survey on this topic). The Lipschitz regularity of minimizers is the subject of an extensive literature (see, for instance, \cite{Clarke_Vinter1985_1,Ambrosio_Ascenzi_Buttazzo1989,Sychev1992,Dal_Maso_Frankowska_2003,Clarke2007,Buttazzo_Giaquinta_Hildebrandt_book,Clarke_book2013}).

This paper is devoted to the {\em generalized variational principle} proposed by Gustav Herglotz in 1930 (\cite{Herglotz1930,Herglotz1979}).  Such a result generalizes classical variational principles by defining a functional whose extrema are sought by a differential equation.

More precisely, let $L\in C^2(\R\times\R^n\times\R^n\times\R,\R)$ and $\xi:[a,b]\to\R^n$ be any piecewise $C^1$ curve. The functional $u_{\xi}$ is defined in an implicit way by the ordinary differential equation
\begin{equation}\label{eq:Cara_intro}
	\dot{u}_{\xi}(s)=L(s,\xi(s),\dot{\xi}(s),u_{\xi}(s)),\quad s\in[a,b],
\end{equation}
with $u_{\xi}(a)=u\in\R$, for $b>a$. The so-called Herglotz' variational principle is to seek an extremal $\xi$ of the functional
\begin{align*}
	u[\xi]:=u_{\xi}(b)-u=\int^b_aL(s,\xi(s),\dot{\xi}(s),u_{\xi}(s))\ ds,
\end{align*}
where $u_{\xi}$ is determined by \eqref{eq:Cara_intro}. We call $\xi$ is an extremal of $u[\xi]$ if $\frac d{d\varepsilon}u[\xi+\varepsilon\eta]=0$ for arbitray piecewise $C^1$ curve $\eta$ such that $\eta(a)=\eta(b)=0$.  Herglotz' variational principe gurantees that any $C^2$ extremal of the functional $u[\xi]$ must satisfy the so-called Herglotz equation
\begin{equation}\label{eq:Hergotz_intro}
	\frac d{ds}L_v=L_x+L_uL_v.
\end{equation}
Herglotz reached the idea of the generalized variational principle through his work on contact transformations and their connections with Hamiltonian systems and Poisson brackets. The reader can find more information on the problem and its rather wide connections in \cite{CCWY2018} (see also \cite{Guenther_Guenther_Gottsch1995,Giaquinta_Hildebrandt_book_I,Giaquinta_Hildebrandt_book_II}) and the references therein. However, to our knowledge, there is no rigorous approach to this problem in a modern setting including the existence and regularity results.

\subsection{Assumptions on $L$}

Now, we impose our assumptions on the Lagrangian $L$. Let $L=L(t,x,v,r):\R\times\R^n\times\R^n\times\R\rightarrow\R$ be a function of class $C^1$ such that the following standing assumptions are satisfied:
\begin{enumerate}[(L1)]
  \item $L(t,x,\cdot,r)$ is strictly convex for all $(t,x,r)\in \R\times\R^n\times\R$.
  \item There exist two superlinear functions $\overline{\theta}_0,\theta_0:[0,+\infty)\to[0,+\infty)$ and two $L^{\infty}_{\rm loc}$-functions $c_0,c_1:\R\to[0,+\infty)$, such that
  \begin{align*}
  	\overline{\theta}_0(|v|)+c_1(t)\geqslant L(t,x,v,0)\geqslant\theta_0(|v|)-c_0(t),\quad (t,x,v)\in\R\times\R^n\times\R^n\times\R.
  \end{align*}
  \item There exists an $L^{\infty}_{\rm loc}$-function $K:\R\to[0,+\infty)$ such that
  \begin{align*}
  	|L_r(t,x,v,r)|\leqslant K(t),\quad (t,x,v,r)\in\R\times\R^n\times\R^n\times\R.
  \end{align*}
  \item There exists two $L^{\infty}_{\rm loc}$-functions $C_1,C_2:\R\to[0,\infty)$ such that
  \begin{align*}
  	|L_t(t,x,v,r)|\leqslant C_1(t)+C_2(t)L(t,x,v,r),\quad (t,x,v,r)\in\R\times\R^n\times\R^n\times\R.
  \end{align*}
\end{enumerate}

There are various conditions that may replace (L4). We will mainly focus on the following substitution of (L4):
\begin{enumerate}[(L1')]\setliststart{4}
	\item There exist two $L^{\infty}_{\rm loc}$-functions $C_1,C_2:\R\to[0,\infty)$ such that for all $(t,x,v,r)\in\R\times\R^n\times\R^n\times\R$
  \begin{align*}
  	\max\{|L_x(t,x,v,r)|,|L_v(t,x,v,r)|\}\leqslant C_1(t)+C_2(t)L(t,x,v,r).
  \end{align*}
\end{enumerate}

\begin{Rem}\label{rem:constants}
If $a<b$ are fixed and $L$ is restricted on $[a,b]\times\R^n\times\R^n\times\R$, then the $L^{\infty}_{\rm loc}$-functions $c_0(t),c_1(t),K(t),C_1(t),C_2(t)$ appear in our assumptions on $L$ can be chosen as constants, say $c_0,c_1,K,C_1,C_2$ (we also set $c_1=0$ for convenience). In fact, we can also assume $C_1\in L^1$ in condition (L4) and (L4') respectively.
\end{Rem}


\subsection{Herglotz' variational principle}

Fix $x,y\in\R^n$, $a<b$ and $u\in\R$. Set
\begin{align*}
	\Gamma^{a,b}_{x,y}=\{\xi\in W^{1,1}([a,b],\R^n): \xi(a)=x,\ \xi(b)=y\}.
\end{align*}
For any given $\xi\in\Gamma^{a,b}_{x,y}$, we consider the Carath\'eodory equation
\begin{equation}\label{eq:app_caratheodory_L}
	\begin{cases}
		\dot{u}_{\xi}(s)=L(s,\xi(s),\dot{\xi}(s),u_{\xi}(s)),\quad a.e.\ s\in[a,b],&\\
		u_{\xi}(a)=u.&
	\end{cases}
\end{equation}
We define the action functional
\begin{equation}\label{eq:app_fundamental_solution}
	J(\xi):=\int^b_aL(s,\xi(s),\dot{\xi}(s),u_{\xi}(s))\ ds,
\end{equation}
where $\xi\in\Gamma^{a,b}_{x,y}$ and $u_{\xi}$ is uniquely determined by \eqref{eq:app_caratheodory_L} by Proposition \ref{caratheodory}. Our purpose is to minimize $J(\xi)$ over
\begin{align*}
	\mathcal{A}=\mathcal{A}^{a,b,u}_{x,y}=\{\xi\in\Gamma^{a,b}_{x,y}: \text{\eqref{eq:app_caratheodory_L} admits an absolutely continuous solution $u_{\xi}$}\}.
\end{align*}
Notice that $\mathcal{A}\not=\varnothing$ because it contains all piecewise $C^1$ curves connecting $x$ to $y$. It is not hard to check that, for each $r\in\R$,
\begin{align*}
	\mathcal{A}=\mathcal{A}':=\{\xi\in\Gamma^{a,b}_{x,y}: \text{$s\mapsto L(s,\xi(s),\dot{\xi}(s),r)$ belongs to $L^1([a,b])$}\}.
\end{align*}
In fact, what we are studying is a variational problem under a very special non-holonomic constraint. The readers can refer to, for instance, \cite{Giaquinta_Hildebrandt_book_II}. Our work is essentially motivated by the recent works \cite{Wang_Wang_Yan2017,CCWY2018,Wang_Yan2019}.

\begin{Pro}\label{existence_intro}
Fix $x,y\in\R^n$, $b>a$ and $u\in\R$. Under conditions \mbox{\rm (L1)-(L3)}, the functional
	\begin{align*}
		\mathcal{A}\ni\xi\mapsto J(\xi)=\int^b_aL(s,\xi(s),\dot{\xi}(s),u_{\xi}(s))\ ds,
	\end{align*}
	where $u_{\xi}$ is determined by \eqref{eq:app_caratheodory_L}, admits a minimizer.	
\end{Pro}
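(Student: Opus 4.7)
The plan is to apply the direct method of the calculus of variations, with the novel feature that the integrand depends on $u_\xi$, which is itself determined by the curve $\xi$ through the Carath\'eodory ODE \eqref{eq:app_caratheodory_L}. I would begin by extracting a priori bounds on $u_\xi$ from (L2) and (L3). Writing the ODE in integrated form and using $|L(s,x,v,r)-L(s,x,v,0)|\leqslant K(s)|r|$ from (L3) together with the lower bound $L(s,x,v,0)\geqslant\theta_0(|v|)-c_0(s)\geqslant-c_0(s)$ from (L2), a Gronwall argument gives a one-sided bound on $u_\xi(s)$ that depends only on $u$ and the $L^1$-norm of $K$ and $c_0$ on $[a,b]$; in particular the infimum of $J$ is finite, since $\mathcal{A}$ contains any piecewise $C^1$ curve joining $x$ to $y$.

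Next, fix a minimizing sequence $\{\xi_n\}\subset\mathcal{A}$. Since $J(\xi_n)=u_{\xi_n}(b)-u$ is bounded, combining the upper bound on $u_{\xi_n}(b)$ with the Gronwall estimate above yields a uniform $L^\infty$ bound on $u_{\xi_n}$. Re-inserting this bound into the ODE and using the coercivity half of (L2) gives
\begin{equation*}
\int_a^b\theta_0(|\dot\xi_n(s)|)\,ds\leqslant C
\end{equation*}
for some constant $C$ independent of $n$. By the de la Vall\'ee Poussin/Dunford--Pettis criterion, $\{\dot\xi_n\}$ is equi-integrable; up to a subsequence $\dot\xi_n\rightharpoonup\dot\xi^*$ weakly in $L^1$, and since the endpoints are fixed, $\xi_n\to\xi^*$ uniformly on $[a,b]$ with $\xi^*\in\Gamma^{a,b}_{x,y}$.

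The next step is to pass to the limit in the ODE to identify $u_{\xi^*}$ as the limit of $u_{\xi_n}$. For this I would show first that $\{u_{\xi_n}\}$ is equi-absolutely continuous: the right-hand side $L(s,\xi_n,\dot\xi_n,u_{\xi_n})$ is bounded below by $-c_0(s)-K(s)\|u_{\xi_n}\|_\infty$ and its positive part integrates to at most $J(\xi_n)+\int(c_0+K\|u_{\xi_n}\|_\infty)\,ds$, so by a standard argument the family is equi-AC. Ascoli yields a uniform limit $\tilde u\in AC$, and a Tonelli-type weak lower semicontinuity argument based on convexity (L1) combined with the Lipschitz dependence on $r$ (L3) shows that $\tilde u$ satisfies \eqref{eq:app_caratheodory_L} along $\xi^*$, hence $\tilde u=u_{\xi^*}$ by uniqueness (Proposition \ref{caratheodory}). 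In particular $\xi^*\in\mathcal{A}$.

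Finally, to conclude $J(\xi^*)\leqslant\liminf_n J(\xi_n)$, I would invoke a standard lower-semicontinuity theorem for integral functionals of the form $\int L(s,\xi,\dot\xi,u_\xi)\,ds$: with $\xi_n,u_{\xi_n}$ converging uniformly and $\dot\xi_n$ weakly in $L^1$, Ioffe's theorem (or a direct Mazur-lemma argument) applies since $L$ is convex in $v$ by (L1) and $L(s,\xi_n,\dot\xi_n,u_{\xi_n})$ is bounded below by the integrable function $-c_0(s)-K(s)\|u_{\xi_n}\|_\infty$. The main obstacle is precisely the coupling between $\xi$ and $u_\xi$: one cannot simply quote the classical Tonelli semicontinuity result, and must first establish uniform convergence of the $u_{\xi_n}$ and then handle the joint passage to the limit in a nonlinear term involving $u_{\xi_n}$. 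Assumption (L3) is what makes this step work, by providing the Lipschitz control in $r$ that closes the Gronwall loop.
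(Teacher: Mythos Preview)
Your overall plan (a priori bounds, minimizing sequence, Dunford--Pettis compactness, lower semicontinuity) matches the paper's, but two steps do not go through as written.

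First, the equi-absolute-continuity of $\{u_{\xi_n}\}$ is not established by your argument: knowing that $\dot u_{\xi_n}=L(s,\xi_n,\dot\xi_n,u_{\xi_n})$ is bounded below by an $L^1$ function and has uniformly bounded $L^1$ norm does \emph{not} yield equi-integrability of $\dot u_{\xi_n}$. Conditions (L1)--(L3) give $\int_a^b\theta_0(|\dot\xi_n|)\,ds\leqslant C$, hence equi-integrability of $\dot\xi_n$, but $|L_0(s,\xi_n,\dot\xi_n)|\leqslant\overline\theta_0(|\dot\xi_n|)+c_1$ can be much larger and need not be equi-integrable; so Ascoli for $u_{\xi_n}$ is not available in general.

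Second, even granting a uniform limit $\tilde u$, weak lower semicontinuity yields only the inequality $\tilde u(s)-u\geqslant\int_a^s L(r,\xi^*,\dot\xi^*,\tilde u)\,dr$, not the Carath\'eodory equation; you cannot conclude $\tilde u=u_{\xi^*}$ from this. (This step \emph{can} be rescued: the integral inequality together with (L3) and a Gronwall comparison gives $\tilde u\geqslant u_{\xi^*}$, whence $J(\xi^*)=u_{\xi^*}(b)-u\leqslant\tilde u(b)-u=\lim J(\xi_n)$, which is all you need. But as stated it is a gap.)

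The paper circumvents both issues by a different device: solving the linear ODE $\dot u_\xi=L_0(s,\xi,\dot\xi)+\widehat{L_u^\xi}\,u_\xi$ by variation of constants, one rewrites
\begin{align*}
J(\xi)=(\mu_\xi(a)-1)u+\int_a^b\mu_\xi(\tau)\,L_0(\tau,\xi(\tau),\dot\xi(\tau))\,d\tau,\qquad \mu_\xi(\tau)=e^{\int_\tau^b\widehat{L_u^\xi}\,dr}.
\end{align*}
By (L3) one has $|\widehat{L_u^\xi}|\leqslant K$, so the weights $\mu_{\xi_k}$ are uniformly bounded and equi-Lipschitz, hence converge uniformly along a subsequence with no appeal to equi-AC of $u_{\xi_k}$. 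Lower semicontinuity is then applied to the classical functional $(\mu,\xi,\dot\xi)\mapsto\int\mu\,L_0(s,\xi,\dot\xi)\,ds$, which is convex in the weakly convergent variable $\dot\xi$ and depends on the strongly convergent pair $(\mu_{\xi_k},\xi_k)$ only continuously.
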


The proof of Proposition \ref{existence_intro} is given in Appendix \ref{sec:existence} (see \cite{CCWY2018} for the time-independent case).

\subsection{Erdmann condition and Herglotz equation}
From the technical point of view, this is the main part of this paper. Since the action functional $J$ is essentially defined in an implicit way, to our knowledge, all the methods in the standard references such as \cite{Clarke_Vinter1985_1}, \cite{Ambrosio_Ascenzi_Buttazzo1989} or \cite{Dal_Maso_Frankowska_2003} can not be applied directly. In the previous paper \cite{CCWY2018}, due to summability issues, we  solved this problem under restrictive growth conditions on $L$ for the autonomous case. In this paper, appealing to additional technical tools, we solve this problem as follows:
\begin{enumerate}[(1)]
  \item We improve the classical du Bois-Reymond lemma in the calculus of variations proving that such a lemma holds even if the test functions are selected in a restricted space. More precisely, suppose $f,g\in L^1([a,b])$, $\delta\in L^{\infty}([a,b])$ and $\delta(s)>0$ for almost all $s\in[a,b]$. Set the family of test functions as $\Omega=\{\beta\in L^{\infty}([a,b]): \int^b_a\beta(s)\ ds=0, |\beta|\leqslant\delta, a.e.\}$. We will show, if
  \begin{align*}
  	\int^b_af(s)b_{\beta}(s)+g(s)\beta(s)\ ds=0,\quad \beta\in\Omega,
  \end{align*}
  where $b_{\beta}(s):=\int^s_a\beta(r)\ dr$ for $\beta\in\Omega$, then there exists a continuous representative $\tilde{g}$ of $g$ such that $\tilde{g}$ is absolutely continuous on $[a,b]$ and $\tilde{g}'(s)=f(s)$ for almost all $s\in[a,b]$.
  \item We have to deal with the problem under various sets of conditions separately. If condition (L1)-(L3) together with (L4) are satisfied, we will adopt the method of \cite{Ambrosio_Ascenzi_Buttazzo1989} based on reparameterization. Without loss of generality we set $[a,b]=[0,t]$ for $t>0$. For any measurable function $\alpha:[0,t]\to[1/2,3/2]$ satisfying $\int^t_0\alpha(s)\ ds=t$, we define $\tau(s)=\int^s_0\alpha(r)\ dr$ for $s\in [0,t]$. Note that $\tau:[0,t]\to[0,t]$ is a bi-Lipschitz map.

  Now, let $\xi\in\Gamma^{0,t}_{x,y}$ be a minimizer of $J$,  and $\alpha\in\Omega$ as above. We define the reparameterization $\eta$ of $\xi$ by $\eta(\tau)=\xi(s(\tau))$ where $s(\tau)$ is the inverse of $\tau(s)$. It follows that $\dot{\eta}(\tau)=\dot{\xi}(s(\tau))/\alpha(s(\tau))$. Let $u_{\eta}$ be the unique solution of \eqref{eq:app_caratheodory_L} with initial condition $u_{\eta}(0)=u$. Then we have that
  \begin{align*}
	J(\xi)\leqslant\, J(\eta)=&\int^t_0L(\tau,\eta(\tau),\dot{\eta}(\tau),u_{\eta}(\tau))\ d\tau\\
	=&\,\int^t_0L(\tau(s),\xi(s),\dot{\xi}(s)/\alpha(s),u_{\xi,\alpha}(s))\alpha(s)\ ds
  \end{align*}
  where $u_{\xi,\alpha}$ solves
  \begin{align*}
	\dot{u}_{\xi,\alpha}(s)=L(\tau(s),\xi(s),u_{\xi,\alpha}(s),\dot{\xi}(s)/\alpha(s))\alpha(s),\quad u_{\xi,\alpha}(0)=u.
  \end{align*}
  Define the functional $\Lambda:\Omega\to\R$ by
  \begin{align*}
	\Lambda(\alpha)=u_{\xi,\alpha}(t)
  \end{align*}
  with $u_{\xi,\alpha}$ as above. We write $\alpha=1+\beta$. We should verify
  \begin{align*}
	0=\frac d{d\varepsilon}\Lambda(1+\varepsilon\beta)\vert_{\varepsilon=0}=\int^b_a\left\{E\cdot\beta-e^{-\int^s_aL_udr}\,L_t(s,\xi,\dot{\xi},u_{\xi})\cdot b_{\beta}\right\}\ ds,
\end{align*}
where $E(s)=e^{-\int^s_aL_udr}\cdot\left\{L_v(s,\xi(s),\dot{\xi}(s),u_{\xi}(s))\cdot\dot{\xi}(s)-L(s,\xi(s),\dot{\xi}(s),u_{\xi}(s))\right\}$, to obtain the Erdmann condition. From technical point of view, we need validate the convergence by using Lebesgue's theorem. That means we need check the required  summability issues. This problem is solved by using our conditions (L1), (L3) and (L4) and the restriction of $\beta\in\Omega$. Now, invoking our generalized du Bois-Reymond lemma, we obtain the Erdmann condition
  \begin{equation}\label{eq:Erdmann_introduction}
  	\frac d{ds}\ E(s)=-e^{-\int^s_aL_udr}L_t(s),\quad a.e.\ s\in[0,t].
  \end{equation}
  \item If conditions (L1)-(L3) together with (L4') are satisfied, we use the standard variation $\xi_{\varepsilon}=\xi+\varepsilon\eta$. Also for the summability difficulty, we restrict our $\eta\in\Omega$. One can deduce the Herglotz equation \eqref{eq:Hergotz_intro} on $[a,b]$ almost everywhere by using the generalized du Bois-Reymond lemma.
\end{enumerate}


If $L$ is of class $C^2$, then any minimizer $\xi$ of \eqref{eq:app_fundamental_solution} is as smooth as $L$ and $\xi$ satisfies Herglotz equation \eqref{eq:Hergotz_intro} on $[a,b]$ where $u_{\xi}$ is of class $C^2$ and satisfies Carath\'eodory ODE \eqref{eq:app_caratheodory_L}. Let $H$ be the associated Hamiltonian defined by
\begin{align*}
	H(t,x,p,r)=\sup_{v\in\R^n}\{p\cdot v-L(t,x,v,r)\},\quad t\in\R, (x,v)\in\R^n\times\R^n, r\in\R.
\end{align*}
Then $H$ is also of class $C^2$ and satisfies certain standard conditions.

Set $p(s)=L_v(s,\xi(s),\dot{\xi}(s),u_{\xi}(s))$. Then the arc $(\xi,p,u_{\xi})$ satisfies the following Lie equation
\begin{equation}\label{eq:Lie}
  	\begin{cases}
  		\dot{\xi}=H_p(s,\xi,u_{\xi},p),\\
  		\dot{p}=-H_x(s,\xi,u_{\xi},p)-H_u(s,\xi,u_{\xi},p)p,\qquad s\in[a,b],\\
  		\dot{u}_{\xi}=p\cdot\dot{\xi}-H(s,\xi,u_{\xi},p).
  	\end{cases}
\end{equation}
Equation \eqref{eq:Lie} is a special kind of contact system. The readers can also recognize \eqref{eq:Lie} as the system of characteristics. This system is widely studied in mathematics (see, for instance, \cite{Arnold_book_1989,Evans_book_2010} for general information and \cite{Su_Wang_Yan2016,Wang_Wang_Yan2019_1,Wang_Wang_Yan2019_2,Maro_Sorrentino2017,Zhao_Cheng2018,CCIZ2019} especially on connections to Aubry-Mather theory and Hamilton-Jacobi equations), mechanics and mathematical physics (see, for instance, \cite{Bravetti2017,Bravetti_Cruz_Tapias2017,Liu_Torres_Wang2018} and  \cite{Nose1984,Hoover1985,Posch_Hoover_Vesely1986,Legoll_Luskin_Moeckel2007} for Nos\'e-Hoover dynamics).

\subsection{Hamilton-Jacobi equations of contact type}

As an application, this paper establishes a connection between Herglotz' variational problem and the Hamilton-Jacobi equation
\begin{equation}\label{eq:HJe_t}\tag{HJ}
\left\{
\begin{split}
	D_tu(t,x)+H(t,x,D_xu(t,x),u(t,x))=&\,0\\
		u(0,x)=&\,\phi(x)
\end{split}
\right.\quad x\in \R^n, t>0,
\end{equation}
the solution of which is given by the related Lax-Oleinik evolution.

We suppose $L$ is of class $C^1$ satisfying conditions (L1)-(L3) together with (L4) (resp. (L4'), with $H$ being the associated Hamiltonian. Fix $x,y\in\R^n$, $t_2>t_1$ and $u\in\R$. We define
\begin{align*}
	h_L(t_1,t_2,x,y,u):=\inf_{\xi\in\mathcal{A}^{t_1,t_2}_{x,y}}\int^{t_2}_{t_1}L(s,\xi(s),\dot{\xi}(s),u_{\xi}(s))\ ds,
\end{align*}
where $u_{\xi}$ is determined by the associated Carath\'eodory equation. The function $h_L$ is called the fundamental solution of \eqref{eq:HJe_t}.

To study \eqref{eq:HJe_t} for a wider class of the initial data. We suppose that $\phi$ is a real-valued function on $\R^n$ which is lower semi-continuous and $(\kappa_1,\kappa_2)$-Lipschitz in the large (see Definition \ref{defn:Lip_in_large}). The main result is that
\begin{align*}
	u(t,x)=\inf_{y\in\R^n}\{\phi(y)+h_L(0,t,y,x,\phi(y))\}
\end{align*}
is finite-valued and it is a viscosity solution of \eqref{eq:HJe_t}. We also introduce the Lax-Oleinik evolution in this context and discuss the related dynamic programming principle. A systematic approach to this problem from Lagrangian formalism will be our task in the future.


\begin{ex}\label{example:duffing}
Let $V$ be a smooth real-valued function on $\R^n\times\R$, $\lambda\in\R$ and let
\begin{align*}
	L(s,x,v,r)=L_0(s,x,v)-\lambda r,
\end{align*}
where $L_0=\frac 12|v|^2-V(x,t)$. Then the associated Herglotz equation, i.e.,
\begin{align*}
	\ddot{x}+\lambda\dot{x}+\nabla_xV(x,t)=0,
\end{align*}
is a Duffing-type equation, which is rather widely studied in many fields such as mechanics, nonlinear physics and engineering (see, for instance, \cite{Moser_book1973}). Recall that the associated Hamiltonian has the form $H=H_0(s,x,p)+\lambda r$ where $H_0$ is the Fenchel-Legendre dual of $L_0$. This model is also closely related to  discounted Hamilton-Jacobi equations in PDE and calculus of variations and optimal control \cite{Arisawa1997,Arisawa1998,Cannarsa_Quincampoix2015,Gomes2008,DFIZ2016,Ishii_Mitake_Tran2017_1,Ishii_Mitake_Tran2017_2,Zhao_Cheng2018,CCIZ2019}.
\end{ex}

The paper is organized as follows: In Section 2, we give a detailed proof of the Erdmann condition and Herglotz equation based on our generalized du Bois-Reymond lemma under various kind of conditions. Then we obtain the expected Lipschitz estimates. In Section 3, we apply Herglotz' variational principle to Hamilton-Jacobi equation \eqref{eq:HJe_t}. We have three appendices. In Appendix A, We collect useful material from analysis and differential equations. The Main part of Appendix B is composed of the details of the proofs of a Tonelli-like existence result and some necessary a priori estimates. In Appendix C, we explain how to move Herglotz' variational principle to manifolds.

\medskip

\noindent\textbf{Acknowledgement} This work is partly supported by National Natural Scientific Foundation of China (Grant No.11790272, No.11871267, No.11631006, and No.11771283), and the National Group for Mathematical Analysis, Probability and Applications (GNAMPA) of the Italian Istituto Nazionale di Alta Matematica ``Francesco Severi''. The authors acknowledge the MIUR support from Excellence Department Project awarded to the Department of Mathematics, University of Rome Tor Vergata, CUP E83C18000100006. The authors are grateful to Qinbo Chen, Cui Chen, Jiahui Hong, Shengqing Hu and Kai Zhao for helpful discussions.

\section{Necessary conditions and Lipschitz estimates}

The main purpose of this section is to give a Lipschitz estimate of any minimizer $\xi$ of \eqref{eq:app_fundamental_solution} and to derive some necessary conditions such as the generalized Euler-Lagrange equation (Herglotz equation) and Erdmann condition. Unlike the autonomous case studied in \cite{CCWY2018}, we will deal with the problem under various kind of conditions. It is worthing noting that
\begin{enumerate}[--]
	\item one can deduce the Erdmann condition for the ``energy function'' directly under the conditions (L1)-(L3) together with (L4);
	\item one can also deduce the Herglotz equation directly under the conditions (L1)-(L3) together with (L4')
\end{enumerate}
A key tool is the following lemma of du Bois-Reymond type (see Theorem \ref{du_Bois_Reymond1}). By using such a result, one can get the required Lipschitz estimate after having derived either Erdmann condition or Herglotz equation.

\subsection{A generalized du Bois-Reymond lemma}

\begin{The}[du Bois-Reymond lemma]\label{du_Bois_Reymond1}
Suppose $f,g\in L^1([a,b])$, $\delta\in L^{\infty}([a,b])$ and $\delta(s)>0$ for almost all $s\in[a,b]$. Set
\begin{align*}
	\Omega=\{\beta\in L^{\infty}([a,b]): \int^b_a\beta(s)\ ds=0, |\beta|\leqslant\delta, a.e.\}.
\end{align*}
If
\begin{equation}\label{eq:DuBois-Reymond}
	\int^b_af(s)b_{\beta}(s)+g(s)\beta(s)\ ds=0,\quad \beta\in\Omega,
\end{equation}
where $b_{\beta}(s):=\int^s_a\beta(r)\ dr$ for $\beta\in\Omega$, then there exists a continuous representative $\tilde{g}$ of $g$ such that $\tilde{g}$ is absolutely continuous on $[a,b]$ and $\tilde{g}'(s)=f(s)$ for almost all $s\in[a,b]$.
\end{The}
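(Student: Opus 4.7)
The plan is to reduce \eqref{eq:DuBois-Reymond} by an integration-by-parts trick, eliminating the nonlocal term $b_\beta$, and then to show that a function in $L^1$ whose integral against every admissible $\beta\in\Omega$ vanishes must be almost everywhere constant. Concretely, I set $F(s):=\int_a^s f(r)\,dr$ and apply Fubini to write
\begin{align*}
\int_a^b f(s)b_\beta(s)\,ds=\int_a^b\beta(r)\!\int_r^b\! f(s)\,ds\,dr=F(b)\!\int_a^b\!\beta\,dr-\int_a^b\! F(r)\beta(r)\,dr=-\int_a^b F(r)\beta(r)\,dr,
\end{align*}
where the last step uses the mean-zero constraint $\int_a^b\beta=0$. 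Substituting into \eqref{eq:DuBois-Reymond} and setting $h:=g-F\in L^1([a,b])$ reduces the lemma to showing $h\equiv c$ a.e.\ for some constant $c$; granting this, $\tilde g:=F+c$ is the desired absolutely continuous representative with $\tilde g'=f$ a.e.

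For the reduced claim I argue by contradiction. If $h$ is not a.e.\ constant then its essential infimum is strictly less than its essential supremum, so there is $c\in\R$ for which both $\{h<c\}$ and $\{h>c\}$ have positive Lebesgue measure. Since $\delta>0$ a.e., the sets $\{\delta\geq 1/n\}$ exhaust $[a,b]$ up to a null set; hence for all sufficiently large $n$ the sets
\begin{align*}
A:=\{h<c\}\cap\{\delta\geq 1/n\},\qquad B:=\{h>c\}\cap\{\delta\geq 1/n\}
\end{align*}
both still have positive measure. I then produce an admissible competitor supported on $A\cup B$, namely
\begin{align*}
\beta:=\lambda\bigl(|A|\,\mathbbm{1}_B-|B|\,\mathbbm{1}_A\bigr),
\end{align*}
with $\lambda>0$ chosen small enough that $\lambda\max\{|A|,|B|\}\leq 1/n$; by construction $\int_a^b\beta=0$ and $|\beta|\leq 1/n\leq\delta$ a.e., so $\beta\in\Omega$. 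Using $h>c$ on $B$ and $h<c$ on $A$ (both of positive measure) gives
\begin{align*}
\int_a^b h\beta\,ds=\lambda|A|\int_B h\,ds-\lambda|B|\int_A h\,ds>\lambda c|A||B|-\lambda c|B||A|=0,
\end{align*}
contradicting $\int h\beta=0$; therefore $h$ is a.e.\ constant.

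The only delicate point is the pointwise admissibility constraint $|\beta|\leq\delta$: because $\delta$ is assumed positive only almost everywhere, with no uniform lower bound, I cannot take $\beta$ supported on arbitrary positive-measure subsets of $\{h<c\}$ and $\{h>c\}$. The layer-cake intersection with $\{\delta\geq 1/n\}$ is precisely what rescues admissibility, and it is the one place where the extra restriction defining $\Omega$ enters the argument; everything else follows the pattern of the classical du Bois-Reymond lemma.
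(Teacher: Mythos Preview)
Your proof is correct, and it takes a genuinely different and more elementary route than the paper's. Both arguments begin with the same integration-by-parts reduction to the case $f\equiv 0$, i.e.\ to showing that $h=g-F$ is a.e.\ constant whenever $\int_a^b h\beta\,ds=0$ for all $\beta\in\Omega$. From there the approaches diverge. The paper proves an intermediate strengthening: for \emph{every} $\beta\in L^\infty$ with $\|\beta\|_\infty\leqslant 1$ and $\int\beta=0$ one still has $\int h\beta=0$. This is done by a Lusin-theorem approximation---restricting to compact sets $E_n$ on which $\delta$ is continuous (hence bounded below), truncating $\beta$ to $E_n$ and rebalancing to restore the mean-zero condition, then letting $n\to\infty$ and controlling the error via absolute continuity of the measure $|\beta|\,ds$. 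Once this density statement is in hand, the classical du Bois-Reymond lemma finishes the job. Your argument bypasses all of this: you go straight for a contradiction by exhibiting a \emph{single} admissible $\beta$ of the form $\lambda(|A|\,\mathbbm{1}_B-|B|\,\mathbbm{1}_A)$, where the layer-cake intersection with $\{\delta\geqslant 1/n\}$ is exactly what makes the pointwise constraint $|\beta|\leqslant\delta$ verifiable. What the paper's approach buys is a reusable density principle (test functions in $\Omega$ are ``as good as'' all bounded mean-zero test functions); what your approach buys is brevity and the avoidance of Lusin's theorem and the somewhat delicate rebalancing step.
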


\begin{proof}
Without loss of generality, we suppose that $\delta\in L^{\infty}([a,b])$ and $\delta(s)>0$ for all $s\in[a,b]$.

Suppose first that $f\equiv0$. Let $\beta\in L^{\infty}([a,b])$, $\|\beta\|_{\infty}\leqslant1$, and $\int^b_a\beta\ ds=0$. Set $A^+=\{\beta\geqslant 0\}$, $A^-=\{\beta<0\}$.

For any $n\geqslant1$, by Lusin's theorem, there exists a compact set $E_n\subset[a,b]$ such that $|E_n|>(b-a)-1/n$ and the restriction of $\delta$ on $E_n$ is continuous. Thus,
\begin{equation}\label{eq:N_n}
	N_n:=\min\{\delta(s): s\in E_n\}>0.
\end{equation}
Set $A^{\pm}_n=A^{\pm}\cap E_n$. Consider the measure $\mu(E)=\int_E|\beta|\ ds$ which is absolutely continuous with respect to Lebesgue measure. We remark that $g\in L^1([a,b],\mu)$. Therefore, for any $\varepsilon>0$ there exists $\sigma_{\varepsilon}>0$ such that for any measurable subset $E\subset[a,b]$ we have that
\begin{equation}\label{eq:AC_1}
	\int_E|\beta|\ ds<\sigma_{\varepsilon}\quad\text{implies}\quad\int_E|g\beta|\ ds=\int_E|g|\ d\mu<\varepsilon/2.
\end{equation}

Fix $\varepsilon>0$ and let $\sigma=\sigma_{\varepsilon}$. Without loss of generality, we suppose that $\int_{A^+_n}|\beta|\ ds>\int_{A^-_n}|\beta|\ ds$\footnote{If the two integrals are equal, we go directly to $\beta_n$ below.}. The other possibility that $\int_{A^+_n}|\beta|\ ds<\int_{A^-_n}|\beta|\ ds$ can be dealt with in a similar way. Then, by \eqref{eq:AC_1}, there exists $n_{\varepsilon}\in\N$ such that for all $n\geqslant n_{\varepsilon}$ we have that
\begin{align*}
	0<\int_{A^+_n}|\beta|\ ds-\int_{A^-_n}|\beta|\ ds<\sigma,
\end{align*}
and
\begin{equation}\label{eq:AC_2}
	\int_{[a,b]\setminus E_n}|g|\ ds<\varepsilon/2.
\end{equation}
Now, define $\psi(s)=\int^s_a|\beta|\cdot\mathbbm{1}_{A^+_n}\ dr$. Then $\psi(b)>\int_{A^-_n}|\beta|\ ds$ and $\psi(a)=0$. Taking $s_n=\sup\{s\in[a,b]:\psi(s)<\int_{A^-_n}|\beta|\ ds\}$, we have that
\begin{align*}
	\int^{s_n}_a|\beta(r)|\cdot\mathbbm{1}_{A^+_n}(r)\ dr=\int_{A^-_n}|\beta(s)|\ ds,
\end{align*}
and, for $n\geqslant n_{\varepsilon}$ we have that
\begin{equation}\label{eq:AC_3}
	\int^b_{s_n}|\beta|\cdot\mathbbm{1}_{A^+_n}\ dr=\int_{A^+_n}|\beta|\ ds-\int_{A^-_n}|\beta|\ ds<\sigma.
\end{equation}

Define
\begin{align*}
	\beta_n(s)=
	\begin{cases}
		\beta(s),&s\in J_n:=(A^+_n\cap[a,s_n])\cupdot A^-_n;\\
		0,&\text{otherwise,}
	\end{cases}
\end{align*}
where $\cupdot$ stands for the union of two disjoint set. Then $N_n\beta_n\in\Omega$ with $N_n$ defined in \eqref{eq:N_n}. Next, suppose $f\equiv0$. Then, in light of \eqref{eq:DuBois-Reymond}, we have that
\begin{equation}\label{eq:DuBois-Reymond2}
	\int^b_ag\beta_n\ ds=0.
\end{equation}
Notice that $E_n=J_n\cupdot(A^+_n\cap(s_n,b])$, or equivalently, $[a,b]\setminus J_n=([a,b]\setminus E_n)\cup(A^+_n\cap(s_n,b])$. Therefore, invoking \eqref{eq:DuBois-Reymond2}, \eqref{eq:AC_2}, \eqref{eq:AC_3} and \eqref{eq:AC_1} and recalling that the integral of $g\beta_n$ vanishes by our assumption, we conclude that for $n\geqslant n_{\varepsilon}$
\begin{align*}
	\left|\int^b_ag\beta\ ds\right|\leqslant&\,\left|\int_{J_n}g\beta\ ds\right|+\left|\int_{[a,b]\setminus J_n }g\beta\ ds\right|\leqslant\left|\int^b_ag\beta_n\ ds\right|+\int_{[a,b]\setminus J_n }|g\beta|\ ds\\
	\leqslant&\,\int_{[a,b]\setminus E_n }|g\beta|\ ds+\int^b_{s_n}|g\beta|\cdot\mathbbm{1}_{A^+_n}\ ds\\
	<&\,\varepsilon.
\end{align*}

Since $\varepsilon$ is arbitrary we conclude that \eqref{eq:DuBois-Reymond}, in the case of $f\equiv0$, holds for any $\beta\in L^{\infty}([a,b])$ such that $\|\beta\|_{\infty}\leqslant1$ and $\int^b_a\beta\ ds=0$. Now, our conclusion is a consequence of the standard du Bois-Reymond lemma (see, for instance, \cite[Lemma 6.1.1]{Cannarsa_Sinestrari_book}).

Finally, to treat the case of $f\not\equiv0$, let $F(s)=\int^s_af(r)\ dr$. Then $F'=f$ almost everywhere on $[a,b]$. Notice that $b_{\beta}(a)=b_{\beta}(b)=0$ for any $\beta\in\Omega$. Then, by \eqref{eq:DuBois-Reymond2}, for any $\beta\in\Omega$ we obtain that
\begin{align*}
	0=&\,\int^b_af(s)b_{\beta}(s)+g(s)\beta(s)\ ds\\
	=&\,\int^b_a\frac d{ds}(F(s)b_{\beta}(s))\ ds+\int^b_a(g(s)-F(s))\beta(s)\ ds\\
	=&\,\int^b_a(g(s)-F(s))\beta(s)\ ds.
\end{align*}
By the first step, we conclude $g-F$  is a.e. equal to some constant $c_0$. So, $\tilde{g}=F+c_0$.
\end{proof}

\subsection{Erdmann condition}

\begin{The}[Erdmann condition]\label{Erdmann_condition}
	Suppose \mbox{\rm (L1)-(L4)} are satisfied. Let $\xi\in\Gamma^{a,b}_{x,y}$ be a minimizer of \eqref{eq:app_fundamental_solution} with $u_{\xi}$ determined by \eqref{eq:app_caratheodory_L}. Set
	\begin{align*}
		\int^s_aL_udr=\int^s_aL_u(r,\xi(r),\dot{\xi}(r),u_{\xi}(r))dr
	\end{align*}
	and define
	\begin{align*}
		E(s):=&\,e^{-\int^s_aL_udr}\cdot\left\{L_v(s,\xi(s),\dot{\xi}(s),u_{\xi}(s))\cdot\dot{\xi}(s)-L(s,\xi(s),\dot{\xi}(s),u_{\xi}(s))\right\}\\
		=&\,e^{-\int^s_aL_udr}\cdot E_0(s)
	\end{align*}
	for almost all $s\in[a,b]$. Then $E$ has a continuous representation $\bar{E}$ such that $\bar{E}$ is absolutely continuous on $[a,b]$ and
	\begin{equation}\label{eq:Erdmann_condition}
		\frac d{ds}\ \bar{E}(s)=-e^{-\int^s_aL_udr}L_t(s)
	\end{equation}
	for almost all $s\in[a,b]$, where $L_t(s)=L_t(s,\xi(s),\dot{\xi}(s),u_{\xi}(s))$.
\end{The}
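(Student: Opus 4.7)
The plan is to exploit the reparameterization variations outlined in item~(2) of the introduction to produce an integral identity of the form required by Theorem~\ref{du_Bois_Reymond1}. After a translation we may assume $[a,b]=[0,t]$ with $t>0$. Fix $\delta_0\in(0,1/2)$ and, following the introduction, set
\begin{align*}
    \Omega=\Bigl\{\beta\in L^{\infty}([0,t]):\int_0^t\beta(s)\,ds=0,\ |\beta|\leq\delta_0\ \text{a.e.}\Bigr\}.
\end{align*}
For $|\varepsilon|\leq 1$ and $\beta\in\Omega$ the function $\alpha_\varepsilon=1+\varepsilon\beta$ takes values in $[1/2,3/2]$, so $\tau_\varepsilon(s)=\int_0^s\alpha_\varepsilon(r)\,dr$ is a bi-Lipschitz self-map of $[0,t]$ with inverse $s_\varepsilon$. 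The reparameterization $\eta_\varepsilon(\tau)=\xi(s_\varepsilon(\tau))$ lies in $\Gamma^{0,t}_{x,y}$ and, after the change of variables from the introduction, satisfies $J(\eta_\varepsilon)=u_\varepsilon(t)-u$, where $u_\varepsilon=u_{\xi,\alpha_\varepsilon}$ solves the corresponding Carath\'eodory ODE.

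\medskip

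Since $\xi$ minimizes $J$, $u_\varepsilon(t)\geq u_\xi(t)$ for every such $\beta$ and every small $\varepsilon$; applying this with $\pm\beta$ forces $\tfrac{d}{d\varepsilon}\bigr|_{\varepsilon=0}u_\varepsilon(t)=0$, provided the derivative exists. Linearizing the ODE for $u_\varepsilon$ and setting $z(s)=\partial_\varepsilon u_\varepsilon(s)\bigr|_{\varepsilon=0}$, one finds $z(0)=0$ and, with $b_\beta(s)=\int_0^s\beta$,
\begin{align*}
    \dot z(s)-L_u(s)\,z(s)=L_t(s)\,b_\beta(s)+\bigl(L(s)-L_v(s)\cdot\dot\xi(s)\bigr)\,\beta(s),
\end{align*}
where $L,L_t,L_u,L_v$ are evaluated along $(s,\xi(s),\dot\xi(s),u_\xi(s))$. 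Multiplying by the integrating factor $\mu(s)=e^{-\int_0^s L_u\,dr}$ and using $\mu E_0=E$ gives
\begin{align*}
    \tfrac{d}{ds}\bigl(\mu(s)z(s)\bigr)=\mu(s)L_t(s)b_\beta(s)-E(s)\beta(s).
\end{align*}
Integrating over $[0,t]$ and imposing $z(t)=0$ yields
\begin{align*}
    \int_0^t E(s)\beta(s)\,ds+\int_0^t\bigl(-\mu(s)L_t(s)\bigr)b_\beta(s)\,ds=0,\qquad\forall\,\beta\in\Omega.
\end{align*}
Applying Theorem~\ref{du_Bois_Reymond1} with $f=-\mu L_t$, $g=E$, and $\delta\equiv\delta_0$ then produces an absolutely continuous representative $\bar E$ of $E$ satisfying $\bar E'(s)=-\mu(s)L_t(s)$ a.e., which is exactly \eqref{eq:Erdmann_condition}.

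\medskip

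The main obstacle is the rigorous justification of the linearization and the verification of the $L^1$ hypotheses needed to apply Theorem~\ref{du_Bois_Reymond1}. The argument rests on the following integrability facts along the minimizer. Because $\xi\in\mathcal{A}=\mathcal{A}'$, $L(\cdot,\xi,\dot\xi,r_0)\in L^1([0,t])$ for each $r_0\in\R$; condition (L3) combined with Gronwall's inequality bounds $u_\xi$ uniformly on $[0,t]$, so $\mu$ is bounded above and below by positive constants and $L(\cdot,\xi,\dot\xi,u_\xi)\in L^1$. Condition (L4) then immediately yields $L_t(\cdot)\in L^1$, so $f=-\mu L_t\in L^1$. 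For $g=E$, convexity (L1) combined with the two-sided superlinear growth (L2) and the uniform bound (L3) on $L_u$ yields a pointwise control of the form $|L_v(s)\cdot\dot\xi(s)|\lesssim L(s,\xi(s),2\dot\xi(s),0)+L(s,\xi(s),\dot\xi(s),0)+c(s)$ with $c\in L^1$, placing $E$ in $L^1$ (this step is the most delicate, as it must be carried out without the a~priori Lipschitz bound, which is only derived afterwards from the present theorem). Finally, the restriction $|\beta|\leq\delta_0$ keeps $\dot\xi/\alpha_\varepsilon$ in a neighbourhood of $\dot\xi$ and, through the same bounds, supplies an integrable majorant for the difference quotients $\varepsilon^{-1}\bigl[L(\tau_\varepsilon,\xi,\dot\xi/\alpha_\varepsilon,u_\varepsilon)\alpha_\varepsilon-L(s,\xi,\dot\xi,u_\xi)\bigr]$, legitimizing the pointwise differentiation under the integral sign via Lebesgue's dominated convergence theorem.
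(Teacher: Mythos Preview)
Your overall architecture---reparameterize, linearize, invoke Theorem~\ref{du_Bois_Reymond1}---is exactly that of the paper. The formal computation leading to
\[
\int_0^t E(s)\beta(s)\,ds+\int_0^t\bigl(-\mu(s)L_t(s)\bigr)b_\beta(s)\,ds=0
\]
is correct. However, the two integrability claims you flag as ``most delicate'' are in fact unjustified as written, and this is precisely where the paper's proof does real work.

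First, your test class uses a \emph{constant} bound $|\beta|\le\delta_0$. For $\beta<0$ this forces you to control $L\bigl(s,\xi,\dot\xi/(1-\delta_0),u_\xi\bigr)$, i.e.\ $L$ evaluated at a velocity of magnitude up to $(1-\delta_0)^{-1}|\dot\xi|$. Since at this stage $\dot\xi$ is only known to be in $L^1$ (no Lipschitz bound yet) and the conditions (L1)--(L4) impose no growth restriction linking $L(s,x,2v,r)$ to $L(s,x,v,r)$, there is no reason for this quantity to lie in $L^1$; think of $L=e^{|v|^2}$. The paper resolves this by a measurable-selection argument: it constructs a genuinely $s$-dependent $\delta(s)>0$ so that $L(s,\xi,\dot\xi/\alpha,u_\xi)\alpha$ stays within $1$ of its value at $\alpha=1$ whenever $|\alpha-1|\le\delta(s)$, and restricts to the set $\Omega_0=\{\beta:|\beta|\le\delta\ \text{a.e.},\ \int\beta=0\}$. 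This is exactly why Theorem~\ref{du_Bois_Reymond1} is formulated with a function $\delta\in L^\infty$ rather than a constant---your choice $\delta\equiv\delta_0$ throws that generalization away.

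Second, your argument that $E\in L^1$ via $|L_v\cdot\dot\xi|\lesssim L(s,\xi,2\dot\xi,0)+\cdots$ has the same defect: $L(s,\xi,2\dot\xi,0)$ is not known to be integrable along the minimizer. The paper does not bound $E$ this way. Instead it introduces a special test function $\beta_M=|E_M|\mathbbm{1}_{E_M^c}-|E_M^c|\mathbbm{1}_{E_M}$ with $E_M=\{|\dot\xi|\le M\}$, which has zero integral and is bounded; for this particular $\beta_M$ the summability after reparameterization can be checked directly (the velocity is bounded on $E_M$, and on $E_M^c$ the factor $\alpha>1$ only shrinks $|\dot\xi|$). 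Running the variational argument for $\beta_M$ yields the integrability of $l_0$, hence of $E$, \emph{a posteriori}. Only then can Theorem~\ref{du_Bois_Reymond1} be applied. Your proposal skips both mechanisms, so the dominated-convergence step and the hypothesis $g\in L^1$ of Theorem~\ref{du_Bois_Reymond1} are unverified.
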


\begin{proof}
We divide the proof into several steps. Without loss of generality, we suppose the time interval is $[0,t]$ with $t=b-a$.

\medskip

\noindent\textbf{Step I: Reparameterization.} We follow the approach from \cite{Clarke2007}. For any measurable function $\alpha:[0,t]\to[1/2,3/2]$ satisfying $\int^t_0\alpha(s)\ ds=t$ (the set of all such functions $\alpha$ is denoted by $\Omega$), we define
\begin{align*}
	\tau(s)=\int^s_0\alpha(r)\ dr,\quad s\in [0,t].
\end{align*}
Note that $\tau:[0,t]\to[0,t]$ is a bi-Lipschitz map and its inverse $s(\tau)$ satisfies
$$
s'(\tau)=\frac 1{\alpha(s(\tau))},\quad a.e.\ \tau\in[0,t].
$$
Now, given $\xi\in\Gamma^{0,t}_{x,y}$  as above and $\alpha\in\Omega$, define the reparameterization $\eta$ of $\xi$ by $\eta(\tau)=\xi(s(\tau))$. It follows that $\dot{\eta}(\tau)=\dot{\xi}(s(\tau))/\alpha(s(\tau))$. Let $u_{\eta}$ be the unique solution of \eqref{eq:app_caratheodory_L} with initial condition $u_{\eta}(0)=u$. Then we have that
\begin{align*}
	J(\xi)\leqslant\, J(\eta)=&\int^t_0L(\tau,\eta(\tau),\dot{\eta}(\tau),u_{\eta}(\tau))\ d\tau\\
	=&\,\int^t_0L(\tau(s),\xi(s),\dot{\xi}(s)/\alpha(s),u_{\xi,\alpha}(s))\alpha(s)\ ds
\end{align*}
where $u_{\xi,\alpha}$ solves
\begin{equation}\label{eq:u_alpha}
	\dot{u}_{\xi,\alpha}(s)=L(\tau(s),\xi(s),u_{\xi,\alpha}(s),\dot{\xi}(s)/\alpha(s))\alpha(s),\quad u_{\xi,\alpha}(0)=u.
\end{equation}
By a direct calculation, for all $\alpha\in\Omega$ and almost all $s\in[0,t]$, we obtain
\begin{align*}
	\dot{u}_{\xi,\alpha}-\dot{u}_{\xi}=&\,L(\tau,\xi,\dot{\xi}/\alpha,u_{\xi,\alpha})\alpha-L(s,\xi,\dot{\xi},u_{\xi})\\
	=&\,L(\tau,\xi,\dot{\xi}/\alpha,u_{\xi,\alpha})\alpha-L(\tau,\xi,\dot{\xi}/\alpha,u_{\xi})\alpha\\
	&\,+L(\tau,\xi,\dot{\xi}/\alpha,u_{\xi})\alpha-L(s,\xi,\dot{\xi},u_{\xi})\\
	=&\,\widehat{L_u^{\alpha}}\,(u_{\xi,\alpha}-u_{\xi})+(L(\tau,\xi,\dot{\xi}/\alpha,u_{\xi})\alpha-L(s,\xi,\dot{\xi},u_{\xi}))
\end{align*}
and $u_{\xi,\alpha}(0)-u_{\xi}(0)=0$, where
\begin{align*}
	\widehat{L_u^{\alpha}}(s)=\int^1_0L_u\big(\tau(s),\xi(s),\dot{\xi}(s)/\alpha(s),u_{\xi}(s)+\lambda(u_{\xi,\alpha}(s)-u_{\xi}(s))\big)\alpha(s)\ d\lambda.
\end{align*}
By solving the Carath\'eodory equation above, we conclude that
\begin{equation}\label{eq_implicit}
	u_{\xi,\alpha}(s)-u_{\xi}(s)=\int^s_0e^{\int^s_{\sigma}\widehat{L_u^{\alpha}}dr}(L(\tau,\xi,\dot{\xi}/\alpha,u_{\xi})\alpha-L(\sigma,\xi,\dot{\xi},u_{\xi}))\ d\sigma
\end{equation}
and $u_{\xi,\alpha}(t)-u_{\xi}(t)\geqslant0$ for all $\alpha\in\Omega$.

\medskip

\noindent\textbf{Step II: Summability after reparameterization.} For $\alpha\in[1/2,3/2]$ we define
\begin{align*}
	\Phi_1(s,\alpha):=L(s,\xi(s),\dot{\xi}(s)/\alpha,u_{\xi}(s))\alpha-L(s,\xi(s),\dot{\xi}(s),u_{\xi}(s)).
\end{align*}
For almost all $s$, by continuity, there exists $\delta_1(s)\in(0,1/2]$ such that
\begin{align*}
	-1\leqslant\Phi_1(s,\alpha)-\Phi_1(s,1)\leqslant 1,\quad \forall\alpha\in[1-\delta_1(s),1+\delta_1(s)].
\end{align*}
We define a set-valued map $G:[0,t]\rightrightarrows\R$ by
\begin{align*}
	[0,t]\ni s\mapsto G(s)=\{\delta>0: \Phi_1(s,[1-\delta,1+\delta])\subset\Phi_1(s,1)+[-1,1]\},
\end{align*}
and for each $k\in\N$ a set-valued map $G_k:[0,t]\rightrightarrows\R$ by
\begin{align*}
	\text{dom}\,(G_k)\ni s\mapsto&\,G_k(s)\\
	&\,=\{\delta\geqslant 1/k: \Phi_1(s,[1-\delta,1+\delta])\subset\Phi_1(s,1)+[-1,1]\}.
\end{align*}
By a standard measurable selection theorem (see, for instance, \cite{Clarke_book2013}), for each $k$, there exists a measurable selection $g_k:\text{dom}\,(G_k)\to\R$ such that $g_k(s)\in G_k(s)$ for all $s\in[0,t]\cap\text{dom}\,(G_k)$. Notice that we can assume that the sequence $\{g_k\}$ is nondecreasing and converges to a measurable selection $g$ of $G$ as $k\to\infty$. Thus, we can assume $\delta(\cdot)$ is measurable and $\delta(s)>0$ for almost all $s\in[0,t]$. We conclude that, if $\alpha\in\Omega$ satisfies $|\alpha(s)-1|\leqslant\delta(s)$ almost all $s\in[0,t]$, then
\begin{equation}\label{eq:sumability1}
	L(s,\xi(s),\dot{\xi}(s)/\alpha(s),u_{\xi}(s))\alpha(s)\in L^1([0,t]).
\end{equation}

Set
\begin{align*}
	\Omega_0=\{\alpha\in\Omega: |\alpha(s)-1|<\delta(s)\ \text{a.e $s\in[0,t]$}\}
\end{align*}
For any $M>0$ we define $E_M=\{s\in[0,t]: \dot{\xi}(s)\ \text{exists and}\ |\dot{\xi}(s)|\leqslant M\}$. We choose $M$ such that $|E_M|>0$ and $|E_M^c|>0$ and set $\beta_M=|E_M|\cdot\mathbbm{1}_{E_M^c}-|E_M^c|\cdot\mathbbm{1}_{E_M}$. Notice that $\alpha=1+\varepsilon\beta_M$ satisfies the summability condition \eqref{eq:sumability1} for small $\varepsilon>0$ even if we cannot ensure $1+\varepsilon\beta_M\in\Omega_0$.

Fix $s\in[0,t]$ such that $\dot{\xi}$ exists and $\alpha\in\Omega_0$. Given $a\in\R$ we define $f(\lambda)=L(s+\lambda a,\xi(s),\dot{\xi}(s)/\alpha(s),u_{\xi}(s))$ for $\lambda\in[0,t]$. Without loss of generality, we suppose $f(\lambda)\geqslant0$ for all $\lambda\in[0,t]$ by Proposition \ref{a_priori} and condition (L2). Invoking condition (L4) we obtain that for all $\lambda\in[0,1]$
\begin{align*}
	f'(\lambda)=&\,L_t(s+\lambda a,\xi(s),\dot{\xi}(s)/\alpha(s),u_{\xi}(s))\cdot a\\
	\leqslant&\,[C_1+C_2L(s+\lambda a,\xi(s),\dot{\xi}(s)/\alpha(s),u_{\xi}(s))]\cdot a=[C_1+C_2f(\lambda)]\cdot a\\
	\leqslant&\,C_1|a|+C_2|a|f(\lambda).
\end{align*}
Applying Gronwall's inequality we have that for all $\lambda\in[0,1]$
\begin{align*}
	f(\lambda)\leqslant e^{C_2|a|\lambda}f(1)+C_1|a|\int^{\lambda}_0e^{C_2|a|(\lambda-s)}\ ds<e^{C_2|a|}(f(1)+C_1|a|).
\end{align*}
It follows that
\begin{equation}\label{eq:sumability2}
	L(s+\varepsilon(\tau(s)-s),\xi(s),\dot{\xi}(s)/\alpha(s),u_{\xi}(s))\ \text{is bounded by a function in}\ L^1([0,t])
\end{equation}
for any $\varepsilon\in[0,1]$ provided $\alpha\in\Omega_0$ or $\alpha=1+\varepsilon\beta_M$.

\medskip

\noindent\textbf{Step III: A necessary condition.} Fix $0\not=\beta\in L^{\infty}([0,t])$ such that $1+\beta\in\Omega_0$. For any $\varepsilon\in \R$ such that $|\varepsilon|\leqslant1$ we have that $1+\varepsilon\beta\in\Omega_0\subset\Omega$. Let $\gamma(s)=\int^s_0\beta(r)\ dr$. Define the functional $\Lambda:\Omega\to\R$ by
\begin{align*}
	\Lambda(\alpha)=u_{\xi,\alpha}(t)
\end{align*}
with $u_{\xi,\alpha}$ defined in \eqref{eq:u_alpha}. Since $\Lambda(1+\varepsilon\beta)\geqslant\Lambda(1)$ for  $|\varepsilon|\leqslant1$, we have that $\frac d{d\varepsilon}\Lambda(1+\varepsilon\beta)\vert_{\varepsilon=0}=0$ if the derivative exists. Thus, for $\varepsilon>0$, by \eqref{eq_implicit},
\begin{equation}\label{eq:upper1}
 	0\leqslant\frac {\Lambda(1+\varepsilon\beta)-\Lambda(1)}{\varepsilon}=\int^t_0e^{\int^t_s\widehat{L_u^{\varepsilon}}dr}\lambda_{\varepsilon}(s)\ ds,
\end{equation}
where $\widehat{L_u^{\varepsilon}}=\widehat{L_u^{1+\varepsilon\beta}}$ and
\begin{equation}\label{eq:spliting}
	\lambda_{\varepsilon}(s):=\frac{L(s+\varepsilon\gamma,\xi,\dot{\xi}/(1+\varepsilon\beta),u_{\xi})(1+\varepsilon\beta)-L(s,\xi,\dot{\xi},u_{\xi})}{\varepsilon}.
\end{equation}
Set
\begin{align*}
	l_{\varepsilon}(s):=L_v(s,\xi,\dot{\xi}/(1+\varepsilon\beta),u_{\xi})\cdot\dot{\xi}/(1+\varepsilon\beta)-L(s,\xi,\dot{\xi}/(1+\varepsilon\beta),u_{\xi}).
\end{align*}
For convenience we take out the variable $s$ on right side of the inequalities above. We claim that
\begin{equation}\label{eq:dBR}
    0=\frac d{d\varepsilon}\Lambda(1+\varepsilon\beta)\vert_{\varepsilon=0}=\int^t_0e^{\int^t_sL_udr}\,\left\{l_0\cdot\beta-L_t(s,\xi,\dot{\xi},u_{\xi})\cdot\gamma\right\}\ ds.
\end{equation}

\medskip

\noindent\textbf{Step IV: On the summability.} By convexity we have that
\begin{align*}
	&\,L(s,\xi,\dot{\xi}/(1+\varepsilon\beta),u_{\xi})-L(s,\xi,\dot{\xi},u_{\xi})\\
	\leqslant&\,-L_v(s,\xi,\dot{\xi}/(1+\varepsilon\beta),u_{\xi})\cdot\{\dot{\xi}-\dot{\xi}/(1+\varepsilon\beta)\}\\
	=&\,-\varepsilon\beta L_v(s,\xi,\dot{\xi}/(1+\varepsilon\beta),u_{\xi})\cdot\dot{\xi}/(1+\varepsilon\beta).
\end{align*}
It follows that
\begin{equation}\label{eq:upper2}
	\begin{split}
		\lambda_{\varepsilon}\leqslant&\,-\beta\{L_v(s,\xi,\dot{\xi}/(1+\varepsilon\beta),u_{\xi})\cdot\dot{\xi}/(1+\varepsilon\beta)-L(s,\xi,\dot{\xi}/(1+\varepsilon\beta),u_{\xi})\}\\
		&\,+\beta\{L(s+\varepsilon\gamma,\xi,\dot{\xi}/(1+\varepsilon\beta),u_{\xi})-L(s,\xi,\dot{\xi}/(1+\varepsilon\beta),u_{\xi})\}\\
		&\,+\frac 1{\varepsilon}(L(s+\varepsilon\gamma,\xi,\dot{\xi}/(1+\varepsilon\beta),u_{\xi})-L(s,\xi,\dot{\xi}/(1+\varepsilon\beta),u_{\xi}))\\
	=&\,-\beta\cdot l_{\varepsilon}+\beta\cdot b_{\varepsilon}+\frac{b_{\varepsilon}}{\varepsilon},
	\end{split}
\end{equation}
where
\begin{align*}
	b_{\varepsilon}=L(s+\varepsilon\gamma,\xi,\dot{\xi}/(1+\varepsilon\beta),u_{\xi})-L(s,\xi,\dot{\xi}/(1+\varepsilon\beta),u_{\xi}).
\end{align*}

In order to validate the integrand on the right side of \eqref{eq:upper1}, we need to focus on the summability of $\lambda_{\varepsilon}$. We observe that
\begin{align*}
	L(s+\varepsilon\gamma,\xi,\dot{\xi}/(1+\varepsilon\beta),u_{\xi})-L(s,\xi,\dot{\xi}/(1+\varepsilon\beta),u_{\xi})=\int^1_0\widehat{L_t}(\lambda,s)\ d\lambda\cdot\varepsilon \gamma,
\end{align*}
where for $\lambda\in[0,1]$ we denote
\begin{align*}
	\widehat{L_t}(\lambda,s):=L_t(s+\lambda\varepsilon\gamma,\xi,\dot{\xi}/(1+\varepsilon\beta),u_{\xi})
\end{align*}
Due to condition (L4), we have that
\begin{align*}
	&\,\frac 1{\varepsilon}\left|L(\tau,\xi,\dot{\xi}/(1+\varepsilon\beta),u_{\xi})-L(s,\xi,\dot{\xi}/(1+\varepsilon\beta),u_{\xi})\right|\\
	\leqslant&\,|\gamma|\cdot\int^1_0C_1+C_2L(s+\lambda\varepsilon\gamma,\xi,\dot{\xi}/(1+\varepsilon\beta),u_{\xi})\ d\lambda.
\end{align*}
Thus $b_{\varepsilon}(s)/\varepsilon$ is bounded by an $L^1$-function by \eqref{eq:sumability2}.

In view of Proposition \ref{convexity} (a), we have that
\begin{align*}
	l_{\varepsilon}\geqslant -L(s,\xi,0,u_{\xi})\geqslant-\overline{\theta}_0(0)-c_1-KF(t,|y-x|/t).
\end{align*}
For any $\beta\in\Omega_0$ and $\varepsilon\in[0,1]$, we rewrite $\lambda_{\varepsilon}(s)$, $l_{\varepsilon}(s)$ and $b_{\varepsilon}(s)$ as $\lambda_{\varepsilon}^{\beta}(s)$, $l_{\varepsilon}^{\beta}(s)$ and $b^{\beta}_{\varepsilon}(s)$ respectively.

Set $\beta^+=\beta\cdot\mathbbm{1}_{\{\beta\geqslant0\}}$ and $\beta^-=-\beta\cdot\mathbbm{1}_{\{\beta<0\}}$, then
\begin{align*}
	\beta=\beta^+-\beta^-,\quad\text{and}\quad\beta^{\pm}\geqslant0.
\end{align*}
By \eqref{eq:upper2} we have that
\begin{align*}
	\lambda_{\varepsilon}^{\beta}(s)+\beta^+(s)l_{\varepsilon}^{\beta}(s)-\beta(s)b^{\beta}_{\varepsilon}(s)-\frac{b^{\beta}_{\varepsilon}(s)}{\varepsilon}\leqslant\beta^-(s)l_{\varepsilon}^{\beta}(s).
\end{align*}
Now, observe that $\beta^+(s)l_{\varepsilon}^{\beta}(s)=\beta^+(s)l_{\varepsilon}^{\beta^+}(s)$ and $\beta^-(s)l_{\varepsilon}^{\beta}(s)=\beta^-(s)l_{-\varepsilon}^{\beta^-}(s)$. Then the inequalities above can recast as follows
\begin{equation}\label{eq:integrability2}
	\lambda_{\varepsilon}^{\beta}(s)+\beta^+(s)l_{\varepsilon}^{\beta^+}(s)-\beta(s)b^{\beta}_{\varepsilon}(s)-\frac{b^{\beta}_{\varepsilon}(s)}{\varepsilon}\leqslant\beta^-(s)l_{-\varepsilon}^{\beta^-}(s).
\end{equation}
Lemma \ref{convexity} (a) ensures that $\varepsilon\mapsto l_{\varepsilon}^{\beta^-}$ is decreasing on $[-1,1]$ and we conclude that
\begin{equation}
	\beta^-l_{-\varepsilon}^{\beta^-}\leqslant\beta^-l_{-1}^{\beta^-}\quad\forall\varepsilon\in(0,1).
\end{equation}
By Lemma \ref{convexity} (b), we obtain
\begin{align*}
	\beta^-l_{-\varepsilon}^{\beta^-}=&\,\beta^-\{L_v(s,\xi,\dot{\xi}/(1-\varepsilon\beta^-),u_{\xi})\cdot\dot{\xi}/(1-\varepsilon\beta^-)-L(s,\xi,\dot{\xi}/(1-\varepsilon\beta^-),u_{\xi})\}\\
	\leqslant&\,(\kappa_{\varepsilon}^{\beta^-})^{-1}L(s,\xi,\dot{\xi}/(1-\beta^-),u_{\xi})-((\kappa_{\varepsilon}^{\beta^-})^{-1}+\beta^-)L(s,\xi,\dot{\xi}/(1-\varepsilon\beta^-),u_{\xi})
\end{align*}
where $(\kappa_{\varepsilon}^{\beta^-})^{-1}=\frac{1-\beta^-}{1-\varepsilon}$. In view of \eqref{eq:sumability2}, \eqref{eq:integrability2} and the fact that $(\kappa_{\varepsilon}^{\beta^-})^{-1}$ is bounded, we conclude that $\beta^-l_{-\varepsilon}^{\beta^-}\in L^1([0,t])$ for all $\varepsilon\in(0,1]$ uniformly.

\medskip

\noindent\textbf{Step IV: Erdmann condition.} We rewrite $L_t(s)=L_t(s,\xi(s),\dot{\xi}(s),u_{\xi}(s))$. Recalling that for almost all $s\in[0,t]$ we have that
\begin{align*}
	\lim_{\varepsilon\to0^+}b^{\beta}_{\varepsilon}(s)=0,\quad\lim_{\varepsilon\to0^+}\frac{b^{\beta}_{\varepsilon}(s)}{\varepsilon}=L_t(s)\cdot\gamma(s).
\end{align*}
Thus, integrating \eqref{eq:integrability2}, by Lebesgue's theorem we obtain
\begin{align*}
	\int^t_0e^{\int^t_sL_udr}\{l_0(s)\beta^+(s)-L_t(s)\gamma(s)\}\ ds\leqslant\int^t_0e^{\int^t_sL_udr}l_0(s)\beta^-(s)ds.
\end{align*}
Therefore, $\int^t_0e^{\int^t_sL_udr}\{l_0\cdot\beta-L_t\cdot\gamma\}\ ds\leqslant0$ and \eqref{eq:dBR} follows since $\beta\in\Omega_0$ is arbitrary. 

Now, observe that  the primitive $\mu(s):=\int^s_0\beta(r)dr$ gives a one-to-one correspondence between  $\Omega_0$
and the set
\begin{align*}
	\Omega_1=\{\mu:[0,t]\to\R: \mu\ \text{is Lipschitz continuous with}\ \mu(0)=\mu(t)=0, \mu'\in\Omega_0\}.
\end{align*}
Thus, \eqref{eq:dBR} can be recast as follows
\begin{align*}
	0=-e^{\int^t_0L_udr}\,\int^t_0E(s)\mu'(s)-e^{-\int^s_0L_udr}L_t(s)\mu(s)\ ds\quad\forall \mu\in\Omega_1.
\end{align*}
So, \eqref{eq:Erdmann_condition} follows by the generalized du Bois-Reymond lemma \footnote{By \eqref{eq:sumability2}, the previous steps of the proof can also be applied to $\beta_M$. This shows that \eqref{eq:dBR} holds for $\beta_M$ and this leads to the summability of $l_0$ as well as $E$. This allows us to use Theorem \ref{du_Bois_Reymond1}.} (Theorem \ref{du_Bois_Reymond1}).
\end{proof}


\subsection{Herglotz equation}

\begin{The}[Herglotz equation]\label{thm:EL}
	Suppose conditions \mbox{\rm (L1)-(L3)} and \mbox{\rm (L4')} are satisfied. Let $\xi\in\Gamma^{a,b}_{x,y}$ be a minimizer of \eqref{eq:app_fundamental_solution} with $u_{\xi}$ determined by \eqref{eq:app_caratheodory_L}. Then, the function $s\mapsto L_x(s,\xi(s),\dot{\xi}(s),u_{\xi}(s))$ is absolutely continuous on $[a,b]$ and $(\xi,u_{\xi})$ satisfies the Carath\'eodory equation \eqref{eq:app_caratheodory_L} and the Herglotz equation
\begin{equation}\label{eq:Herglotz}
	\begin{split}
		&\,\frac d{ds}L_v(s,\xi(s),\dot{\xi}(s),u_{\xi}(s))\\
	=&\,L_x(s,\xi(s),\dot{\xi}(s),u_{\xi}(s))+L_u(s,\xi(s),\dot{\xi}(s),u_{\xi}(s))L_v(s,\xi(s),\dot{\xi}(s),u_{\xi}(s))
	\end{split}
\end{equation}
for almost all $s\in[a,b]$.
\end{The}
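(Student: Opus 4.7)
The strategy parallels Theorem \ref{Erdmann_condition}, but using the additive variation $\xi_\varepsilon=\xi+\varepsilon\eta$ in place of a reparameterization, and exploiting (L4') instead of (L4). The admissible perturbations are taken of the form $\eta(s)=b_\beta(s)=\int_a^s\beta(r)\,dr$ with $\beta$ in a class $\Omega$ as in Theorem \ref{du_Bois_Reymond1}. This automatically forces $\eta(a)=\eta(b)=0$, so $\xi_\varepsilon\in\Gamma^{a,b}_{x,y}$. The measurable envelope $\delta(s)>0$ defining $\Omega$ is produced by the same Lusin and measurable-selection recipe used to build the $\delta$ of Step II of the Erdmann proof, tailored this time to force uniform integrability of $L(\cdot,\xi_\varepsilon,\dot\xi_\varepsilon,u_\xi)$ for $\varepsilon\in[-1,1]$.

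First I would derive the analogue of \eqref{eq_implicit}. Splitting
\begin{align*}
L(s,\xi_\varepsilon,\dot\xi_\varepsilon,u_{\xi_\varepsilon})-L(s,\xi,\dot\xi,u_\xi)=\widehat{L_u^\varepsilon}(u_{\xi_\varepsilon}-u_\xi)+\bigl[L(s,\xi_\varepsilon,\dot\xi_\varepsilon,u_\xi)-L(s,\xi,\dot\xi,u_\xi)\bigr],
\end{align*}
where $\widehat{L_u^\varepsilon}$ is $L_u$ averaged between $u_\xi$ and $u_{\xi_\varepsilon}$, and solving the resulting linear Carath\'eodory ODE gives
\begin{align*}
u_{\xi_\varepsilon}(s)-u_\xi(s)=\int_a^s e^{\int_\sigma^s\widehat{L_u^\varepsilon}\,dr}\bigl[L(\sigma,\xi_\varepsilon,\dot\xi_\varepsilon,u_\xi)-L(\sigma,\xi,\dot\xi,u_\xi)\bigr]\,d\sigma.
\end{align*}
Minimality $u_{\xi_\varepsilon}(b)\geqslant u_\xi(b)$ for all small $\varepsilon$ forces $\varepsilon=0$ to be a critical point of the right-hand side at $s=b$.

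The crux is passing to the limit $\varepsilon\to0$ under the integral sign. Here (L4') enters: the bound $\max\{|L_x|,|L_v|\}\leqslant C_1+C_2 L$, combined with a Gronwall estimate along the segment $\lambda\mapsto(\xi+\lambda\varepsilon\eta,\dot\xi+\lambda\varepsilon\dot\eta)$ (in the spirit of the calculation preceding \eqref{eq:sumability2}), produces a fixed $L^1$ function majorizing $L(\sigma,\xi_\varepsilon,\dot\xi_\varepsilon,u_\xi)$ uniformly in $\beta\in\Omega$ and $\varepsilon\in[-1,1]$. Writing the difference quotient of $L$ as $\int_0^1[L_x\cdot\eta+L_v\cdot\dot\eta]\,d\lambda$ evaluated along the interpolated curve, one sees that it is dominated by an integrable function and converges pointwise to $L_x(\sigma,\xi,\dot\xi,u_\xi)\cdot b_\beta(\sigma)+L_v(\sigma,\xi,\dot\xi,u_\xi)\cdot\beta(\sigma)$. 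Since $\widehat{L_u^\varepsilon}\to L_u(\sigma,\xi,\dot\xi,u_\xi)$ a.e., Lebesgue's theorem yields, after pulling out the constant factor $e^{\int_a^b L_u\,dr}$,
\begin{align*}
0=\int_a^b e^{-\int_a^\sigma L_u\,dr}\bigl\{L_x(\sigma,\xi,\dot\xi,u_\xi)\cdot b_\beta(\sigma)+L_v(\sigma,\xi,\dot\xi,u_\xi)\cdot\beta(\sigma)\bigr\}\,d\sigma
\end{align*}
for every $\beta\in\Omega$.

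This identity is precisely the hypothesis of Theorem \ref{du_Bois_Reymond1} with $f(\sigma)=e^{-\int_a^\sigma L_u\,dr}L_x$ and $g(\sigma)=e^{-\int_a^\sigma L_u\,dr}L_v$. The lemma furnishes an absolutely continuous representative of $g$ with $g'=f$ a.e. Expanding $g'$ by the product rule gives
\begin{align*}
e^{-\int_a^\sigma L_u\,dr}\Bigl\{\frac{d}{d\sigma}L_v-L_u L_v-L_x\Bigr\}=0,
\end{align*}
which upon cancelling the strictly positive exponential is exactly \eqref{eq:Herglotz}. The main obstacle is the uniform summability step: one must construct $\delta$ via measurable selection so that the dominating $L^1$ function exists, and verify that $C_1+C_2L$ from (L4') does yield Gronwall control along the affine homotopy between $\xi$ and $\xi_\varepsilon$. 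This is where the switch from (L4) to (L4') is decisive, because the linearization of the additive variation produces $L_x$ and $L_v$ rather than $L_t$.
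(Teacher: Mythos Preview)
Your proposal is correct and follows essentially the same route as the paper: additive variation $\xi_\varepsilon=\xi+\varepsilon\eta$, the linear Carath\'eodory ODE for $u_{\xi_\varepsilon}-u_\xi$, uniform $L^1$ control to pass to the limit, and then Theorem~\ref{du_Bois_Reymond1}. The only technical difference is that the paper secures the integrability of $L_v\cdot\dot\eta$ by a convexity sandwich $L(s,\xi,\dot\xi,u_\xi)-L(s,\xi,\dot\xi_{-1},u_\xi)\leqslant L_v\cdot\dot\eta\leqslant L(s,\xi,\dot\xi_{1},u_\xi)-L(s,\xi,\dot\xi,u_\xi)$ together with measurable selection for the $\xi_\varepsilon$-bound, whereas your Gronwall argument along $\lambda\mapsto(\xi+\lambda\varepsilon\eta,\dot\xi+\lambda\varepsilon\dot\eta)$ via (L4') handles both terms at once; in fact your Gronwall step already gives the uniform $L^1$ majorant (since $|\beta|$ and $|b_\beta|$ are bounded once $\delta\in L^\infty$), so the measurable-selection step you mention becomes redundant.
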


\begin{proof}
Let $\xi\in\Gamma^{a,b}_{x,y}$ be a minimizer of \eqref{eq:app_fundamental_solution} where $u_{\xi}$ is determined uniquely by \eqref{eq:app_caratheodory_L}. For any $\varepsilon\in\R$ and any Lipschitz function $\eta\in\Gamma^{a,b}_{0,0}$,  we set $\xi_{\varepsilon}(s)=\xi(s)+\varepsilon\eta(s)$. Let $u_{\xi_{\varepsilon}}$ be the associated unique solution of \eqref{eq:app_caratheodory_L} with respect to $\xi_{\varepsilon}$, i.e., $u_{\xi_{\varepsilon}}$ satisfies
\begin{equation}\label{eq:Caratheory_lambda}
	\begin{cases}
		\dot{u}_{\xi_{\varepsilon}}(s)=L(s,\xi_{\varepsilon}(s),\dot{\xi}_{\varepsilon}(s),u_{\xi_{\varepsilon}}(s)),\quad a.e.\ s\in[a,b],&\\
		u_{\xi_{\varepsilon}}(0)=u.&
	\end{cases}
\end{equation}
It is clear that $\xi_{\varepsilon}\in\Gamma^{a,b}_{x,y}$ and $J(\xi)\leqslant J(\xi_{\varepsilon})$. Combining \eqref{eq:Caratheory_lambda} and \eqref{eq:app_caratheodory_L} we have that
\begin{align*}
	\dot{u}_{\xi_{\varepsilon}}-\dot{u_{\xi}}=&\,L(s,\xi_{\varepsilon},\dot{\xi}_{\varepsilon},u_{\xi_{\varepsilon}})-L(s,\xi,\dot{\xi},u_{\xi})\\
	=&\,\{L(s,\xi_{\varepsilon},\dot{\xi}_{\varepsilon},u_{\xi_{\varepsilon}})-L(s,\xi_{\varepsilon},\dot{\xi}_{\varepsilon},u_{\xi})\}+\{L(s,\xi_{\varepsilon},\dot{\xi}_{\varepsilon},u_{\xi})-L(s,\xi,\dot{\xi},u_{\xi})\}\\
	=&\,\widehat{L^{\varepsilon}_u}(u_{\xi_{\varepsilon}}-u_{\xi})+\{L(s,\xi_{\varepsilon},\dot{\xi}_{\varepsilon},u_{\xi})-L(s,\xi,\dot{\xi},u_{\xi})\},
\end{align*}
where
\begin{align*}
	\widehat{L^{\varepsilon}_u}(s)=\int^1_0L_u(s,\xi_{\varepsilon}(s),\dot{\xi}_{\varepsilon}(s),u_{\xi}(s)+\lambda(u_{\xi_{\varepsilon}}(s)-u_{\xi}(s)))\ d\lambda.
\end{align*}
It follows that
\begin{align*}
	u_{\xi_{\varepsilon}}(s)-u_{\xi}(s)=\int^s_ae^{\int^s_{\sigma}\widehat{L_u^{\varepsilon}}dr}(L(s,\xi_{\varepsilon},\dot{\xi}_{\varepsilon},u_{\xi})-L(s,\xi,\dot{\xi},u_{\xi}))\ d\sigma.
\end{align*}
Recalling that $J(\xi_{\varepsilon})=u_{\xi}(t)$, we obtain
\begin{equation}\label{eq:variation1}
	0\leqslant \frac {J(\xi_{\varepsilon})-J(\xi)}{\varepsilon}=\int^b_ae^{\int^b_s\widehat{L_u^{\varepsilon}}dr}\cdot\frac{L(s,\xi_{\varepsilon},\dot{\xi}_{\varepsilon},u_{\xi})-L(s,\xi,\dot{\xi},u_{\xi})}{\varepsilon}\ ds.
\end{equation}

Now, similarly to Step II of the proof of Theorem \ref{Erdmann_condition}, by using the measurable selection theorem, there exists $\delta\in L^{\infty}([a,b])$, with $\delta>0$ a.e., such that, if $|\eta(s)|\leqslant\delta(s)$ for almost all $s\in[a,b]$, then $L(s,\xi_{\varepsilon},\dot{\xi}_{\varepsilon},u_{\xi})$ is bounded by an $L^1$-function uniformly for $|\varepsilon|\leqslant1$. Invoking condition (L4'), we conclude that $L_x(s,\xi_{\varepsilon},\dot{\xi},u_{\xi})$ is also bounded by an $L^1$-function uniformly for $|\varepsilon|\leqslant1$. By convexity we have that
\begin{align*}
	L(s,\xi,\dot{\xi},u_{\xi})-L(s,\xi,\dot{\xi}_{-1},u_{\xi})\leqslant L_v(s,\xi,\dot{\xi},u_{\xi})\cdot\dot{\eta}\leqslant L(s,\xi,\dot{\xi}_1,u_{\xi})-L(s,\xi,\dot{\xi},u_{\xi}).
\end{align*}
It follows that $L_v(s,\xi,\dot{\xi},u_{\xi})\cdot\dot{\eta}\in L^1([a,b])$. Now, we can assume that  $L_v(s,\xi,\dot{\xi}_{\varepsilon},u_{\xi})\cdot\dot{\eta}$ is bounded by an $L^1$-function for all $|\varepsilon|\leqslant1$.

Fix $\eta\in\Gamma^{a,b}_{0,0}$ such that $|\eta(s)|\leqslant\delta(s)$ for almost all $s\in[0,t]$. We claim that
\begin{equation}\label{eq:EL_integral}
	\frac{d}{d\varepsilon}J(\xi_{\varepsilon})=0=\int^b_ae^{\int^b_sL_udr}\cdot\{L_x\cdot\eta+L_v\cdot\dot{\eta}\}\ ds.
\end{equation}
By convexity, we have that
\begin{align*}
	L_v(s,\xi,\dot{\xi},u_{\xi})\cdot\dot{\eta}\leqslant\frac{L(s,\xi,\dot{\xi}_{\varepsilon},u_{\xi})-L(s,\xi,\dot{\xi},u_{\xi})}{\varepsilon}\leqslant L_v(s,\xi,\dot{\xi}_{\varepsilon},u_{\xi})\cdot\dot{\eta}.
\end{align*}
Moreover,
\begin{align*}
	\left|\frac{L(s,\xi_{\varepsilon},\dot{\xi}_{\varepsilon},u_{\xi})-L(s,\xi,\dot{\xi}_{\varepsilon},u_{\xi})}{\varepsilon}\right|\leqslant |\eta|\int^1_0|L_x(s,\xi_{\varepsilon}+\lambda(\xi_{\varepsilon}-\xi),\dot{\xi}_{\varepsilon},u_{\xi})|\ d\lambda
\end{align*}
Taking the limit in \eqref{eq:variation1} as $\varepsilon\to 0^+$, then \eqref{eq:EL_integral} follows by Lebesgue's theorem. Thus, \eqref{eq:Herglotz} follows by Theorem \ref{du_Bois_Reymond1} provided $L_v(s,\xi(s),\dot{\xi}(s),u_{\xi}(s))\in L^1([a,b])$ which is guaranteed by condition (L4').
\end{proof}

\begin{Rem}
It is also useful to rewrite the Herglotz equation is the form
\begin{equation}\label{eq:Herglotz_2}
	\frac d{ds}e^{-\int^s_aL_u(r)\ dr}L_v(s,\xi(s),\dot{\xi}(s),u_{\xi}(s))=e^{-\int^s_aL_u(r)\ dr}L_x(\xi(s),\dot{\xi}(s),u_{\xi}(s)),
\end{equation}
where $L_u(s)=L_u(s,\xi(s),\dot{\xi}(s),u_{\xi}(s))$.
\end{Rem}


\subsection{Lipschitz estimates}

In this section, we will prove the Lipschitz estimates for the minimizer $\xi$ of \eqref{eq:app_fundamental_solution}.

\begin{The}\label{Lip}
Suppose conditions \mbox{\rm (L1)-(L3)} are satisfied together with either \mbox{\rm (L4)} or \mbox{\rm (L4')}. Let $u\in\R$  and $R>0$ be fixed. Then there exists  a continuous function $F=F_{u,R}:[0,+\infty)\times[0,+\infty)\to[0,+\infty)$, with $F(t,r)$ nondecreasing in both variables and superlinear with respect to $r$, such that for any given $b>a$ and $x,y\in\R^n$, with $|x-y|\leqslant R$, every minimizer $\xi\in\Gamma^{a,b}_{x,y}$ for \eqref{eq:app_fundamental_solution} satisfies
	\begin{align*}
		\operatorname*{ess\ sup}_{s\in[a,b]}|\dot{\xi}(s)|\leqslant F(b-a,R/(b-a)).
	\end{align*}
\end{The}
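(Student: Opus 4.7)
The plan is to combine the a priori estimates of Proposition \ref{a_priori} with the Erdmann condition (Theorem \ref{Erdmann_condition}, under (L4)) or the Herglotz equation (Theorem \ref{thm:EL}, under (L4')) through a Gronwall-type argument, and then to convert the resulting pointwise bound on $L_v$ into a pointwise bound on $|\dot\xi|$ via convexity and the superlinearity in (L2). Throughout, write $t = b-a$.

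\textbf{Step 1 (a priori data and choice of a good point).} Proposition \ref{a_priori} controls $\|u_\xi\|_\infty$ and $\int_a^b L(s,\xi,\dot\xi,u_\xi)\,ds = u_\xi(b)-u$ in terms of $t,R,u$. Combined with (L2)--(L3), the lower bound $L(s,\xi,\dot\xi,u_\xi)\geq\theta_0(|\dot\xi|)-c_0-K\|u_\xi\|_\infty$ yields
\begin{align*}
    \int_a^b \theta_0(|\dot\xi(s)|)\,ds\leq K_1(t,R),
\end{align*}
and Chebyshev's inequality then provides a set of measure at least $t/2$ on which $|\dot\xi(s)|\leq N(t,R)$. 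This also bounds $\|\xi\|_{\infty,[a,b]}$ (via a superlinearity-based $L^1$ bound on $\dot\xi$). Pick a Lebesgue point $s^*$ of $\dot\xi$ in this good set where the a.e.\ identity for $\bar E(s^*)$ holds; by continuity of $L,L_v$ on the bounded region swept by $(\xi(s^*),\dot\xi(s^*),u_\xi(s^*))$ we get quantitative bounds on $|L_v(s^*)|$ and on $E_0(s^*)$.

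\textbf{Step 2 (propagation via the necessary conditions).} Under (L4), integrate \eqref{eq:Erdmann_condition} and use $|L_u|\leq K$, $|L_t|\leq C_1+C_2 L$, and the lower bound on $L$ to pass from $\int L$ to $\int |L|$:
\begin{align*}
    |\bar E(s)-\bar E(s^*)|\leq e^{Kt}\int_a^b(C_1+C_2|L(r,\xi,\dot\xi,u_\xi)|)\,dr\leq M(t,R).
\end{align*}
Together with $e^{-\int L_u\,dr}\in[e^{-Kt},e^{Kt}]$ this gives a uniform a.e.\ bound $|H(s,\xi(s),L_v(s),u_\xi(s))|=|L_v\cdot\dot\xi-L|\leq M_2(t,R)$. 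Since (L2)--(L3) yield $H(s,\xi,p,u_\xi)\geq\overline{\theta}_0^{\,*}(|p|)-c_1-K\|u_\xi\|_\infty$, and the Fenchel conjugate $\overline{\theta}_0^{\,*}$ of the superlinear $\overline{\theta}_0$ is itself superlinear (a general property of conjugates of superlinear functions), this upgrades to a bound $|L_v(s)|\leq M_4(t,R)$. Under (L4') one integrates \eqref{eq:Herglotz_2} instead and arrives at the same conclusion directly, using $|L_x|\leq C_1+C_2 L$.

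\textbf{Step 3 (from $|L_v|$ to $|\dot\xi|$, and the obstacle).} Convexity of $L$ in $v$ (applied between $v=\dot\xi$ and $v=0$) together with (L2)--(L3) gives
\begin{align*}
    L_v(s)\cdot\dot\xi(s)\geq L(s,\xi,\dot\xi,u_\xi)-L(s,\xi,0,u_\xi)\geq\theta_0(|\dot\xi(s)|)-C(t,R),
\end{align*}
while Cauchy--Schwarz gives $L_v(s)\cdot\dot\xi(s)\leq M_4|\dot\xi(s)|$. Superlinearity of $\theta_0$ then forces $|\dot\xi(s)|\leq F(t,R/t)$ a.e., where $F$ is constructed so as to be continuous, nondecreasing in both variables, and superlinear in the second (this is automatic since all intermediate constants depend continuously and monotonically on $t$ and $R$, and the inversion of $\theta_0$ against a linear term produces superlinear dependence on $R/t$). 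The main technical obstacle I foresee is choosing $s^*$ so that the integrated Erdmann/Herglotz identity (which only holds in an a.e.\ sense after passage to the absolutely continuous representative) genuinely delivers a pointwise bound at $s^*$, and organizing all the prefactors $e^{Kt}$, $C_1t$, $C_2 K_1$, \ldots into a single function of $(t,R/t)$ with the required monotonicity.
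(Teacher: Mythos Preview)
Your proposal is correct and follows the same overall architecture as the paper's proof: invoke the a~priori bounds of Proposition~\ref{a_priori}, locate a point where $|\dot\xi|$ is controlled, propagate via the Erdmann condition (under (L4)) or the Herglotz equation (under (L4')), and finish with a superlinearity argument.

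The only noteworthy divergence is in the final conversion. Under (L4), the paper obtains the one-sided bound $E_0(s)\leqslant F_6$ and then uses the convexity trick with $v\mapsto \dot\xi/(1+|\dot\xi|)$ to show $L(s,\xi,\dot\xi,u_\xi)$ grows at most linearly in $|\dot\xi|$, whence $\theta_0(|\dot\xi|)\leqslant C+C'|\dot\xi|$; you instead extract a two-sided bound on $E_0=H(s,\xi,L_v,u_\xi)$ and use coercivity of $H$ in $p$ (via $\overline{\theta}_0^{\,*}$) to first bound $|L_v|$, then run your Step~3 convexity argument. Under (L4'), the paper jumps from the $|L_v|$-bound to $|\dot\xi|$ via $\dot\xi=H_p$ on a compact set, whereas you again use Step~3. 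Your route has the mild advantage of a unified endgame for both cases; the paper's route under (L4) avoids the detour through $|L_v|$. Neither buys anything essential over the other.
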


\begin{proof}
We consider two cases, one for each of the different assumptions of the theorem.

\medskip

\noindent\textbf{Case I}: We assume conditions (L1)-(L3) together with (L4).

\medskip

Let $\xi\in\Gamma^{a,b}_{x,y}$ be a minimizer of \eqref{eq:app_fundamental_solution}, for $\alpha>0$. Set
\begin{align*}
	l_{\xi}(s,\alpha)=\alpha\cdot L(s,\xi(s),\dot{\xi}(s)/\alpha,u_{\xi}(s))
\end{align*}
and recall $E_0=L_v(s,\xi,\dot{\xi},u_{\xi})\cdot\dot{\xi}-L(s,\xi,\dot{\xi},u_{\xi})$. Simple computations show that $l_{\xi}(s,\cdot)$ is convex and
\begin{align*}
    \frac{d}{d\alpha}\bigg|_{\alpha=1}l_{\xi}(s,\alpha)=-E_{0}(s).
\end{align*}
Choosing $s_{0}\in[a,b]$ such that $|\dot{\xi}(s_{0})|=\operatorname*{ess\ inf}_{s\in[a,b]}|\dot{\xi}(s)|$ by convexity, we have that
$$
-E_{0}(s_{0})\geqslant \sup_{\alpha<1}\frac{l_{\xi}(s_{0},1)-l_{\xi}(s_{0},\alpha)}{1-\alpha}
$$
Recall that $|u_\xi|$ is bounded by $F_1(b-a,R/(b-a))$ and $\operatorname*{ess\ inf}_{s\in[a,b]}|\dot{\xi}(s)|$ is bounded by $F_2(b-a,R/(b-a))$ by Proposition \ref{a_priori}. For convenience, we drop the variables in the functions $F_1$ and $F_2$, and also $F_i$ in the following text.

Taking $\alpha=\frac{1}{2}$, by (L2)-(L3) we conclude that
\begin{align*}
	-E_{0}(s_0)\geqslant&\,2(l_{\xi}(s_0,1)-l_{\xi}(s_0,1/2))=2(L(s_{0},\xi(s_0),\dot{\xi}(s_0),u_{\xi}(s_0))-l_{\xi}(s_0,1/2))\\
		\geqslant&\,-2c_0-2KF_1-L(s_{0},\xi(s_0),2\dot{\xi}(s_0),u_{\xi}(s_0)) \\
		\geqslant&\,-2c_0-3KF_1-L(s_{0},\xi(s_0),2\dot{\xi}(s_0),0)\\
		\geqslant&\,-2c_0-3KF_1-\overline{\theta}_0(2|\dot{\xi}(s_0)|)-c_1\\
		\geqslant&\,-2c_0-3KF_1-\overline{\theta}_0(2F_2)-c_1:=-F_3.
\end{align*}
We rewrite $L_t(s)=L_t(s,\xi(s),\dot{\xi}(s),u_{\xi}(s))$ and $L_u(s)=L_u(s,\xi(s),\dot{\xi}(s),u_{\xi}(s))$. Then, by Erdmann's condition \eqref{eq:Erdmann_condition} we obtain that for almost all $s\in[a,b]$,
\begin{align*}
	E(s)=&\,E(s_0)-\int^{s}_{s_{0}}e^{-\int^{\tau}_{a}L_udr}L_t(\tau)\ d\tau\leqslant e^{-\int^{s_0}_{a}L_udr}E_0(s_0)+\int^b_ae^{-\int^{\tau}_{a}L_udr}|L_t(\tau)|\ d\tau\\
	\leqslant&\,e^{K(b-a)}F_3+e^{K(b-a)}\int^b_a|L_t(s)|\ ds.
\end{align*}
By (L4) we conclude that
\begin{align*}
	E(s)\leqslant&\,e^{K(b-a)}F_3+e^{K(b-a)}\int^b_a\big\{C_1+C_2L(s,\xi(s),\dot{\xi}(s),u_{\xi}(s))\big\}\ ds\\
	\leqslant&\,e^{K(b-a)}\big\{F_3+C_1(b-a)+C_2F_4\big\}:=F_5,
\end{align*}
where $\int^b_aL(s,\xi(s),\dot{\xi}(s),u_{\xi}(s))\ ds$ is bounded by $F_4$ by Proposition \ref{a_priori}. Therefore, we have that, for almost all $s\in[a,b]$,
\begin{equation}\label{eq:Erdmann_condition1}
E_{0}(s)=e^{\int^s_aL_ud\tau}E(s)\leqslant e^{K(b-a)}F_5:=F_6.
\end{equation}

Now, let $s$ be such that $\dot{\xi}(s)$ exists and \eqref{eq:Erdmann_condition1} holds. By convexity, we have that
\begin{align*}
	&\,L(s,\xi(s),\dot{\xi}(s)/(1+|\dot{\xi}(s)|),u_{\xi}(s))-L(s,\xi(s),\dot{\xi}(s),u_{\xi}(s))\\
	\geqslant&\,((1+|\dot{\xi}(s)|)^{-1}-1)\cdot\langle L_v(s,\xi(s),\dot{\xi}(s),u_{\xi}(s)),\dot{\xi}(s)\rangle\\
	\geqslant&\,((1+|\dot{\xi}(s)|)^{-1}-1)\cdot(L(s,\xi(s),\dot{\xi}(s),u_{\xi}(s))+F_6).
\end{align*}
It follows that
\begin{align*}
	&\,L(s,\xi(s),\dot{\xi}(s),u_{\xi}(s))\\
	\leqslant&\, L(s,\xi(s),\dot{\xi}(s)/(1+|\dot{\xi}(s)|),u_{\xi}(s))(1+|\dot{\xi}(s)|)+F_6|\dot{\xi}(s)|.
\end{align*}
Let $C=\sup_{s\in[a,b],|v|\leqslant1}L(s,\xi(s),v,u_{\xi}(s))$ and observe that, by (L2) and Proposition \ref{a_priori},
\begin{align*}
	C\leqslant\sup_{s\in[a,b],|v|\leqslant1}\{L(s,\xi(s),v,0)+K|u_{\xi}(s)|\}\leqslant\overline{\theta}_0(1)+c_1+KF_1:=F_7.
\end{align*}
It follows that
\begin{align*}
	L(s,\xi(s),\dot{\xi}(s),u_{\xi}(s))\leqslant F_7+(F_6+F_7)|\dot{\xi}(s)|.
\end{align*}
Therefore, invoking Proposition \ref{a_priori}, we obtain
\begin{align*}
	&\,(F_6+F_7+1)|\dot{\xi}(s)|-(\theta_0^*(F_6+F_7+1)+c_0)\\
	\leqslant&\, \theta_0(|\dot{\xi}(s)|)-c_0\leqslant L(s,\xi(s),\dot{\xi}(s),0)\leqslant L(s,\xi(s),\dot{\xi}(s),u_{\xi}(s))+K|u_{\xi}(s)|\\
	\leqslant&\, F_7+(F_6+F_7)|\dot{\xi}(s)|+KF_1.
\end{align*}
This leads to
\begin{align*}
	|\dot{\xi}(s)|\leqslant(\theta_0^*(F_6+F_7+1)+c_0)+F_7+KF_1:=F_8,
\end{align*}
which completes the proof of Case I.

\medskip

\noindent\textbf{Case II}: We suppose conditions (L1)-(L3) together with (L4') are satisfied.

\medskip

This case is much easier than Case I. Again, we choose $s_{0}\in[0,t]$ such that
$$
|\dot{\xi}(s_{0})|=\operatorname*{ess\ inf}_{s\in[a,b]}|\dot{\xi}(s)|\leqslant F_1.
$$
By Corollary \ref{a_priori}, $\xi(s)$ is contained in $B(x,(b-a)F_2)$ and $|u_{\xi}(s)|$ is bounded by $F_3$. Set
\begin{align*}
	F_4=\max\{|L_{v}(s_{0},y,v,r)|:|y-x|\leqslant (b-a)F_2,|v|\leqslant F_1,|r|\leqslant F_3\}.
\end{align*}
By solving Herglotz' equation in the form \eqref{eq:Herglotz_2} we have that, for any $s\in[a,b]$,
\begin{align*}
	&\,e^{-\int^{s}_aL_ud\tau}L_{v}(s,\xi(s),\dot{\xi}(s),u_{\xi}(s))\\
	=&\,e^{-\int^{s_{0}}_aL_ud\tau}L_{v}(s_{0},\xi(s_{0}),\dot{\xi}(s_{0}),u_{\xi}(s_{0}))+\int^{s}_{s_{0}}e^{-\int^{\tau}_aL_u dr}L_{x}\ d\tau
\end{align*}
By condition (L4') we conclude that, for almost $s\in[a,b]$,
\begin{equation}\label{eq:Erdmann_condition2}
	\begin{split}
		&\,|L_{v}(s,\xi(s),\dot{\xi}(s),u_{\xi}(s))|\\
	\leqslant&\,e^{2K(b-a)}F_4+e^{2K(b-a)}\int^b_a|L_x(s,\xi(s),\dot{\xi}(s),u_{\xi}(s))|\ ds\\
	\leqslant&\,e^{2K(b-a)}F_4+e^{2K(b-a)}\int^b_a\big\{C_1+C_2L(s,\xi(s),\dot{\xi}(s),u_{\xi}(s))\big\}\ ds\\
	\leqslant&\,e^{2K(b-a)}\big\{F_4+C_1(b-a)+C_2F_5\big\}:=F_6.
	\end{split}
\end{equation}

Now, let $H$ be the Hamiltonian associated with $L$. Set
\begin{align*}
	F_7=\max\{|H_{p}(s,y,p,r)|:|y-x|\leqslant (b-a)F_2,|p|\leqslant F_6,|r|\leqslant F_3\}.
\end{align*}
Then, for any $s\in[a,b]$ such that $\dot{\xi}(s)$ exists and \eqref{eq:Erdmann_condition2} is satisfied, we obtain that
\begin{align*}
	|\dot{\xi}(s)|=|H_p(s,\xi(s),L_v(s,\xi(s),\dot{\xi}(s),u_{\xi}(s)),u_{\xi}(s))|\leqslant F_7.
\end{align*}
This completes the proof of Case II.
\end{proof}

\begin{Cor}
Theorem \ref{thm:EL} holds under the assumptions \mbox{\rm (L1)-(L4)} or \mbox{\rm (L1)-(L3) and (L4')}. In particular, Herglotz equation \eqref{eq:Herglotz} holds true.
\end{Cor}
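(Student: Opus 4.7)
The second case of the corollary, under (L1)-(L3) together with (L4'), is precisely Theorem \ref{thm:EL}, so only the case (L1)-(L4) requires an argument. The overall strategy is to leverage the Lipschitz estimate of Theorem \ref{Lip}---which holds under both sets of hypotheses---to confine the minimizer and its competitors to a fixed compact region, on which condition (L4') is automatically satisfied, and then to essentially rerun the proof of Theorem \ref{thm:EL}.

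First I would apply Theorem \ref{Lip} with $R=|y-x|$ to obtain a constant $M = F(b-a, R/(b-a))$ such that $|\dot{\xi}(s)|\leqslant M$ for almost every $s\in[a,b]$. Combining this with Proposition \ref{a_priori} (boundedness of $u_{\xi}$), the quadruple $(s,\xi(s),\dot{\xi}(s),u_{\xi}(s))$ remains in a fixed compact set $K\subset[a,b]\times\R^n\times\R^n\times\R$. Since $L$ is $C^1$, each of $L_x,L_v,L_u,L_t$ is uniformly bounded on any slightly enlarged compact neighborhood $K'$ of $K$; in particular, condition (L4') holds on $K'$ with suitable constants.

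Next, I would mimic the proof of Theorem \ref{thm:EL}. Given an arbitrary Lipschitz variation $\eta\in\Gamma^{a,b}_{0,0}$, the perturbation $\xi_{\varepsilon}=\xi+\varepsilon\eta$ has derivative $\dot{\xi}_{\varepsilon}=\dot{\xi}+\varepsilon\dot{\eta}$, so for $|\varepsilon|$ sufficiently small (depending on $\|\eta\|_{\infty}$ and $\|\dot{\eta}\|_{\infty}$), the quadruple $(s,\xi_{\varepsilon}(s),\dot{\xi}_{\varepsilon}(s),u_{\xi}(s))$ stays inside $K'$. Consequently, $L(s,\xi_{\varepsilon},\dot{\xi}_{\varepsilon},u_{\xi})$, $L_x(s,\xi_{\varepsilon},\dot{\xi}_{\varepsilon},u_{\xi})$ and $L_v(s,\xi,\dot{\xi},u_{\xi})$ are all uniformly bounded in $\varepsilon$---which is exactly the role that (L4') played in Theorem \ref{thm:EL}. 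The Lebesgue dominated convergence argument from that proof then applies verbatim to yield
\begin{align*}
    0=\int^b_ae^{\int^b_sL_u\,dr}\bigl\{L_x\cdot\eta+L_v\cdot\dot{\eta}\bigr\}\,ds
\end{align*}
for all such $\eta$, and the generalized du Bois-Reymond lemma (Theorem \ref{du_Bois_Reymond1}) produces the Herglotz equation \eqref{eq:Herglotz} almost everywhere on $[a,b]$.

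The main obstacle is conceptually minor but must be tracked carefully: one has to verify that, under the Lipschitz bound, the perturbed trajectories $\xi_{\varepsilon}$ genuinely remain in the compact neighborhood $K'$ for sufficiently small $\varepsilon$, so that all $L^1$ domination arguments in the proof of Theorem \ref{thm:EL} go through without invoking any global growth hypothesis on $L_x$ or $L_v$. In particular, the pointwise bound $|\eta|\leqslant\delta$ required for the test functions can now be taken with $\delta$ simply a sufficiently large constant rather than a general $L^{\infty}$ function, making the application of Theorem \ref{du_Bois_Reymond1} immediate.
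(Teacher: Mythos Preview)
Your proposal is correct and follows essentially the same route as the paper: use Theorem~\ref{Lip} to obtain a uniform bound on $\dot\xi$, observe that all relevant quantities then live in a fixed compact set where the $C^1$ regularity of $L$ supplies the bounds that (L4') would otherwise provide, and rerun the proof of Theorem~\ref{thm:EL} without summability difficulties. The paper's own proof is terser---it simply notes that the Lipschitz estimate removes the summability obstacles so that the argument of Theorem~\ref{thm:EL} goes through in simplified form---but the content is the same, and your observation that $\delta$ can be taken constant is exactly the simplification the paper alludes to.
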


\begin{proof}
Due to Theorem \ref{Lip}, we have the uniform bound of $\dot{\xi}(s)$ for almost all $s\in[0,t]$. Along the proof of Theorem \ref{Erdmann_condition}, there is no summability difficulty since the Lipschitz estimates, and Erdmann condition \eqref{eq:Erdmann_condition} can be obtained directly by Step V in the proof of Theorem \ref{Erdmann_condition}. Now, the proof of the theorem is similar to but simpler than that of Theorem \ref{thm:EL}  because of our Lipschitz estimates.
\end{proof}

\begin{Cor}
	The minimal curve $\xi$ of \eqref{eq:app_fundamental_solution} is of class $C^1$ as well as $u_{\xi}$.
\end{Cor}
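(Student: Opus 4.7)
The plan is to combine the Lipschitz bound from Theorem \ref{Lip} with the absolute continuity of the dual variable obtained from the Herglotz equation, and then invert via the Legendre transform.

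First I would fix a minimizer $\xi \in \Gamma^{a,b}_{x,y}$ and set $p(s) := L_v(s,\xi(s),\dot\xi(s),u_\xi(s))$. By Theorem \ref{Lip}, $\dot\xi \in L^\infty([a,b])$, so $\xi$ is Lipschitz continuous, and the Carath\'eodory equation \eqref{eq:app_caratheodory_L} shows $u_\xi$ is absolutely continuous (hence continuous). By Theorem \ref{thm:EL} (applicable under either (L4) or (L4') thanks to the preceding corollary), the map $s \mapsto p(s)$ admits an absolutely continuous representative on $[a,b]$; in particular $p$ is continuous.

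Next I would invoke strict convexity of $L(t,x,\cdot,r)$ together with superlinearity coming from (L2) to pass to the Hamiltonian $H(t,x,p,r) = \sup_v\{p\cdot v - L(t,x,v,r)\}$. Since $L\in C^1$ and $L_v(t,x,\cdot,r)$ is a homeomorphism from $\R^n$ onto $\R^n$, the relation $p = L_v(t,x,v,r)$ is inverted by a continuous map, and in fact $v = H_p(t,x,p,r)$ with $H_p$ continuous on $\R \times \R^n \times \R^n \times \R$. Applied pointwise a.e., this gives
\begin{equation*}
\dot\xi(s) = H_p(s,\xi(s),p(s),u_\xi(s)) \quad\text{a.e. } s\in[a,b].
\end{equation*}
The right-hand side is continuous in $s$ (composition of the continuous map $H_p$ with the continuous curves $s\mapsto\xi(s)$, $s\mapsto p(s)$, $s\mapsto u_\xi(s)$), so $\dot\xi$ has a continuous representative on $[a,b]$. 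Since $\xi$ is absolutely continuous with an a.e.\ derivative equal to a continuous function, $\xi \in C^1([a,b],\R^n)$.

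Finally, the Carath\'eodory equation reads $\dot u_\xi(s) = L(s,\xi(s),\dot\xi(s),u_\xi(s))$ a.e. Now that $\xi \in C^1$, $u_\xi$ is continuous, and $L$ is $C^1$, the right-hand side is a continuous function of $s$. Hence $u_\xi$ is $C^1$ as well.

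The main potential obstacle is the inversion step: one must know that $L_v(t,x,\cdot,r)^{-1}$ depends continuously on all four variables jointly, not just on $v$. Under our standing assumptions (L1)--(L2), however, this is the standard regularity of the Legendre transform of a $C^1$ strictly convex superlinear function with parameters, so the argument goes through without additional work.
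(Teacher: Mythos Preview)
Your argument is correct and follows essentially the same route as the paper: both rely on the absolute continuity of $p(s)=L_v(s,\xi(s),\dot\xi(s),u_\xi(s))$ coming from the Herglotz equation together with the invertibility of $v\mapsto L_v$ granted by strict convexity, and then conclude $u_\xi\in C^1$ from the Carath\'eodory equation. The only cosmetic difference is that the paper argues by sequences (showing any subsequential limit of $\dot\xi(t_k)$ is pinned down by injectivity of $L_v$, then invoking \cite[Lemma~6.2.6]{Cannarsa_Sinestrari_book}), whereas you pass directly through the continuous inverse $H_p$; these are two phrasings of the same step.
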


\begin{proof}
Let $N$ be the set of zero Lebesgue measure where $\dot{\xi}$ does not exist.  For $\bar{t}\in[a,b]$, choose a sequence $\{t_k\}\in [a,b]\setminus N$ such that $t_k\to\bar{t}$. Then $\dot{\xi}(t_k)\to\bar{v}$ for some $\bar{v}\in\R^n$ (up to subsequences) and
	\begin{align*}
		&\,L_v(\bar{t},\xi(\bar{t}),\dot{\xi}(\bar{t}),u_{\xi}(\bar{t}))-L_v(t_1,\xi(t_1),\dot{\xi}(t_1),u_{\xi}(t_1))\\
		=&\,\lim_{k\to\infty}L_v(t_k,\xi(t_k),\dot{\xi}(t_k),u_{\xi}(t_k))-L_v(t_1,\xi(t_1),\dot{\xi}(t_1),u_{\xi}(t_1))\\
		=&\,\int^{\bar{t}}_{t_1}\{L_x(s,\xi(s),\dot{\xi}(s),u_{\xi}(s))+L_u(s,\xi(s),\dot{\xi}(s),u_{\xi}(s))L_v(s,\xi(s),\dot{\xi}(s),u_{\xi}(s))\} ds
	\end{align*}
	by Herglotz equation \eqref{eq:Herglotz}. From the strict convexity of $L$ it follows that the map $v\mapsto L_v(s,\xi(s),v,u_{\xi}(s))$ is a diffeomorphism. This implies that $\bar{v}$ is uniquely determined, i.e.,
	\begin{align*}
		\lim_{[0,t]\setminus N\ni s\to\bar{t}}\dot{\xi}(s)=\bar{v}.
	\end{align*}
	Now, by Lemma 6.2.6 in \cite{Cannarsa_Sinestrari_book}, $\dot{\xi}(\bar{t})$ exists and $\lim_{[0,t]\setminus N\ni s\to\bar{t}}\dot{\xi}(s)=\dot{\xi}(\bar{t})$. It follows that $\xi$ is of class $C^1$. In view of \eqref{eq:app_caratheodory_L}, $u_{\xi}$ is also of class $C^1$.
\end{proof}

The following improvement of the main results in this section is very similar to that in \cite{CCWY2018}. We omit the proof.

\begin{Pro}\label{Herglotz_Lie}
	Suppose $L$ is of class $C^2$ and satisfies conditions {\rm (L1)-(L3)} together with {\rm (L4)} or {\rm (L4')}. For any fixed $x,y\in\R^n$, $b>a$ and $u\in\R$, the functional $J$ defined in  \eqref{eq:app_fundamental_solution} admits a minimizer. Moreover,
\begin{enumerate}[\rm (a)]
  \item both $\xi$ and $u_{\xi}$ are of class $C^2$ and $\xi$ satisfies Herglotz' equation \eqref{eq:Hergotz_intro} for all $s\in[a,b]$ where $u_{\xi}$ is the unique solution of \eqref{eq:app_caratheodory_L};
  \item the dual arc $p$ defined by $p(s)=L_v(s,\xi(s),\dot{\xi}(s),u_{\xi}(s))$ is also of class $C^2$ and $(\xi,p,u_{\xi})$ satisfies Lie equation \eqref{eq:Lie} for all $s\in[a,b]$.
\end{enumerate}
\end{Pro}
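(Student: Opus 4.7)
The plan is to bootstrap the regularity of a minimizer from $C^1$ to $C^2$ by combining the Herglotz equation, the Carath\'eodory ODE, and the Legendre transform, and then to translate Herglotz' equation into the Lie system via duality.

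First, existence is already available: Proposition \ref{existence_intro} yields a minimizer $\xi\in\mathcal{A}^{a,b,u}_{x,y}$ under (L1)-(L3) alone. Under either (L1)-(L4) or (L1)-(L3)+(L4'), Theorem \ref{Lip} then gives an $L^\infty$ bound on $\dot\xi$, and the corollaries preceding Proposition \ref{Herglotz_Lie} ensure that $\xi$ is $C^1$, that $u_\xi$ (being the solution of \eqref{eq:app_caratheodory_L} with a continuous right-hand side) is $C^1$, and that Herglotz' equation \eqref{eq:Herglotz} holds pointwise on $[a,b]$.

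Next I would upgrade to $C^2$ regularity. Define the dual arc
\begin{equation*}
p(s):=L_v(s,\xi(s),\dot\xi(s),u_\xi(s)),\qquad s\in[a,b].
\end{equation*}
Since $L\in C^2$ and $(\xi,\dot\xi,u_\xi)$ is continuous, $p$ is continuous on $[a,b]$. The Herglotz equation \eqref{eq:Herglotz} now reads
\begin{equation*}
\dot p(s)=L_x(s,\xi(s),\dot\xi(s),u_\xi(s))+L_u(s,\xi(s),\dot\xi(s),u_\xi(s))\,p(s),
\end{equation*}
and the right-hand side is continuous, so $p\in C^1([a,b];\R^n)$. By (L1) and $L\in C^2$, the map $v\mapsto L_v(s,x,v,r)$ is a $C^1$-diffeomorphism and its inverse is exactly $p\mapsto H_p(s,x,p,r)$, with $H\in C^2$. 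Consequently
\begin{equation*}
\dot\xi(s)=H_p(s,\xi(s),p(s),u_\xi(s))
\end{equation*}
expresses $\dot\xi$ as a $C^1$ function of $s$, hence $\xi\in C^2$. Plugging back into \eqref{eq:app_caratheodory_L} shows $\dot u_\xi=L(s,\xi,\dot\xi,u_\xi)$ is $C^1$, so $u_\xi\in C^2$. Finally, the right-hand side of the equation for $\dot p$ above is then $C^1$ in $s$, so $p\in C^2$.

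For part (b) it remains to rewrite the system in Lie form. Legendre duality at $(s,\xi,p,u_\xi)$ gives $L_x=-H_x$, $L_u=-H_u$, and $L=p\cdot\dot\xi-H$ at $v=H_p(s,\xi,p,u_\xi)$. Substituting these identities into Herglotz' equation and into \eqref{eq:app_caratheodory_L} yields exactly
\begin{equation*}
\dot\xi=H_p,\qquad \dot p=-H_x-H_u\,p,\qquad \dot u_\xi=p\cdot\dot\xi-H,
\end{equation*}
which is \eqref{eq:Lie}. I expect no essential difficulty: the only subtle point is making sure the Legendre inversion is applied in the correct variables and at the correct base point, so that the identities $L_x=-H_x$ and $L_u=-H_u$ can be used to convert \eqref{eq:Herglotz} into the second line of \eqref{eq:Lie}; the rest is the standard bootstrap already used for the Tonelli regularity theorem.
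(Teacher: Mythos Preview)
Your argument is correct and supplies exactly the bootstrap that the paper omits (the paper does not give a proof of this proposition, stating only that it is ``very similar to that in \cite{CCWY2018}''). The chain you use---existence, Lipschitz bound, $C^1$ regularity and pointwise Herglotz equation from the preceding corollaries, then $p=L_v\in C^1$ via \eqref{eq:Herglotz}, $\dot\xi=H_p\in C^1$ via Legendre inversion, hence $\xi,u_\xi\in C^2$, and finally $p\in C^2$---is the standard Tonelli-type regularity upgrade adapted to the contact setting, and the duality computation for \eqref{eq:Lie} is correct.

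One minor caveat worth making explicit: the step ``$v\mapsto L_v(s,x,v,r)$ is a $C^1$-diffeomorphism with inverse $H_p$'' uses not just strict convexity (L1) but the positive definiteness of $L_{vv}$, which is how one obtains $H\in C^2$ from $L\in C^2$. The paper itself asserts $H\in C^2$ without comment, so you are operating at the same level of detail, but if you want the argument to be self-contained you should add the hypothesis $L_{vv}>0$ (or note that this is implicit in the paper's claim that $H$ is $C^2$).
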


\section{Applications to Hamilton-Jacobi equations in the contact type}

In this section, we want to explain the relations between Herglotz' variational principle and the Hamilton-Jacobi equation \eqref{eq:HJe_t}. Throughout this section, we suppose that $L$ satisfies condition (L1)-(L3), together with (L4) or (L4'). Therefore Proposition \ref{Herglotz_Lie} holds.

\subsection{Fundamental solutions and Lax-Oleinik evolution}

Fix $x,y\in\R^n$, $t_2>t_1$ and $u\in\R$. Let $\xi\in\mathcal{A}_1:=\Gamma^{t_1,t_2}_{x,y}\cap C^2([t_1,t_2],\R^n)$ and let $u_{\xi}$ be the unique $C^2$ solution of the ODE
\begin{equation}\label{eq:Caratheodory_ODE}
	\begin{cases}
		\dot{u}_{\xi}(s)=L(s,\xi(s),\dot{\xi}(s),u_{\xi}(s)),\quad s\in[t_1,t_2],&\\
		u_{\xi}(t_1)=u.&
	\end{cases}
\end{equation}
We define
\begin{equation}\label{eq:fundamental_solution}
	h_L(t_1,t_2,x,y,u):=\inf_{\xi\in\mathcal{A}_1}\int^{t_2}_{t_1}L(s,\xi(s),\dot{\xi}(s),u_{\xi}(s))\ ds=\inf_{\xi\in\mathcal{A}_1}u_{\xi}(t_2)-u.
\end{equation}

An associated variational problem of Herglotz' type is as follows:
\begin{equation}\label{eq:fundamental_solution_2}
	\breve{h}_L(t_1,t_2,x,y,u):=\inf_{\xi}\int^{t_2}_{t_1}L(s,\xi(s),\dot{\xi}(s),w_{\xi}(s))\ ds
\end{equation}
where the infimum is taken over all $\xi\in\mathcal{A}_1$ such that a terminal condition problem of Carath\'eodory equation
\begin{equation}\label{eq:caratheodory_L_2}
	\begin{cases}
		\dot{w}_{\xi}(s)=L(s,\xi(s),\dot{\xi}(s),w_{\xi}(s)),\quad s\in[t_1,t_2],&\\
		w_{\xi}(t_2)=u,&
	\end{cases}
\end{equation}
admits a (unique) solution. Invoking Proposition \ref{Herglotz_Lie}, the infimum in the definition of $h_L(t_1,t_2,x,y,u)$ and $\breve{h}_L(t_1,t_2,x,y,u)$ can be achieved.

\begin{defn}
	Fix $x,y\in\R^n$, $t_2>t_1$ and $u\in\R$. We call the function $h_L(t_1,t_2,x,y,u)$ (resp. $\breve{h}_L(t_1,t_2,x,y,u)$) the {\em negative} (resp. {\em positive}) {\em type fundamental solution} for \eqref{eq:HJe_t}.
\end{defn}

\begin{defn}[$t$-dependent case]\label{defn:Lax_Oleinik}
	For any function $\phi:\R^n\to[-\infty,+\infty]$, we define
	\begin{align*}
	\begin{split}
		(\mathbf{T}^{t_2}_{t_1}\phi)(x)=\inf_{y\in\R^n}\{\phi(y)+h_L(t_1,t_2,y,x,\phi(y))\},\\
		(\breve{\mathbf{T}}^{t_2}_{t_1}\phi)(x)=\sup_{y\in\R^n}\{\phi(y)-\breve{h}_L(t_1,t_2,x,y,\phi(y))\},
	\end{split}
		\quad t_2>t_1, x\in\R^n.
	\end{align*}
	The operators $\mathbf{T}^{t_2}_{t_1}$ and $\breve{\mathbf{T}}^{t_2}_{t_1}$ are called the {\em negative} and {\em positive type Lax-Oleinik operators}, respectively, and $\mathbf{T}^{t_2}_{t_1}\phi$ and $\breve{\mathbf{T}}^{t_2}_{t_1}\phi$ are called the {\em negative} and {\em positive type Lax-Oleinik evolution of $\phi$}, respectively.
\end{defn}

\begin{defn}\label{defn:Lip_in_large}
Let $(x,d)$ be a metric space. A function $\phi:X\to\R$ is called $(\kappa_1,\kappa_2)$-Lipschitz in the large if there exists $\kappa_1,\kappa_2\geqslant 0$ such that
\begin{align*}
	|\phi(y)-\phi(x)|\leqslant\kappa_1+\kappa_2d(x,y),\quad\forall x,y\in X.
\end{align*}	
\end{defn}

\begin{ex}\label{example:Lip_in_large}
Given $\phi:X\to\R$. We have that
\begin{enumerate}[(i)]
	\item If $X$ is compact, it is obvious that $\phi$ is $(\kappa_1,\kappa_2)$-Lipschitz in the large if and only if $\phi$ is bounded.
	\item If $X=\R^n$ or any complete Riemannian manifold and $\phi$ is uniformly continuous, then for any $\varepsilon>0$ there exists $K_{\varepsilon}>0$ such that $\phi$ is $(\varepsilon,K_{\varepsilon})$-Lipschitz in the large (see Proposition \ref{UC_Lip}).
	\item If $\phi$ is Lipschitz with constant $\mbox{\rm Lip}\,(\phi)$, then $\phi$ is $(0,\mbox{\rm Lip}\,(\phi))$-Lipschitz in the large.
\end{enumerate}
\end{ex}

\begin{Rem}
We have some remarks on the operators $\mathbf{T}^{t_2}_{t_1}\phi$ and $\breve{\mathbf{T}}^{t_2}_{t_1}\phi$.
\begin{itemize}[--]
  \item Notice that there is no extra assumption on the function $\phi$ in Definition \ref{defn:Lax_Oleinik}. But, to ensure that $\mathbf{T}^{t_2}_{t_1}\phi$ and $\breve{\mathbf{T}}^{t_2}_{t_1}\phi$ are finite-valued and the infimum and supremum in Definition \ref{defn:Lax_Oleinik} can be achieved, we need more conditions.
  \item In \cite{Bernard2012}, the author pointed out that if $\phi$ is {\em continuous and Lipschitz in the large}, then $u(t,x)=(\mathbf{T}^t_0\phi)(x)$ is finite-valued for any classical time-dependent Lagrangian $L(t,x,v)$. For more informations on functions that are Lipschitz in the large and applications to Lax-Oleinik evolution in classical case, see \cite{Fathi2019,Cannarsa_Cheng_Fathi2018}.
  \item Using an idea from the proof of Lemma 3.1 in \cite{Cannarsa_Cheng3} (see also \cite{Zhao_Cheng2018} when the Lagrangian has the form $L(x,v,r)$), we can show that the infimum and supremum in Definition \ref{defn:Lax_Oleinik} can be achieved if $\phi$ is lower and upper semi-continuous respectively, and $(\kappa_1,\kappa_2)$-Lipschitz in the large. See Lemma \ref{inf_min} below.
  \item Moreover, if $\phi$ is lower and upper semi-continuous respectively, and $(\kappa_1,\kappa_2)$-Lipschitz in the large, then $\mathbf{T}^{t_2}_{t_1}\phi$ and $\breve{\mathbf{T}}^{t_2}_{t_1}\phi$ satisfies the following {\em Markov property}:
  \begin{align*}
  	\mathbf{T}^{t_3}_{t_2}\circ\mathbf{T}^{t_2}_{t_1}=\mathbf{T}^{t_3}_{t_1},\quad \breve{\mathbf{T}}^{t_3}_{t_2}\circ\breve{\mathbf{T}}^{t_2}_{t_1}=\breve{\mathbf{T}}^{t_3}_{t_1},
  \end{align*}
  whenever $t_1<t_2<t_3$. We can also have that $\lim_{t\to0^+}\mathbf{T}^t_0\phi=\phi$ and $\lim_{t\to0^+}\breve{\mathbf{T}}^t_0\phi=\phi$ if $\phi$ is lower and upper semi-continuous respectively, and $(\varepsilon,K_{\varepsilon})$-Lipschitz in the large for any $\varepsilon>0$. Therefore, it is natural to set both $\mathbf{T}^t_t$ and $\breve{\mathbf{T}}^t_t$ ($t\geqslant0$) to be the identity.
  \item It is useful to regard the definition of $\mathbf{T}^{t_2}_{t_1}$ or $\breve{\mathbf{T}}^{t_2}_{t_1}$ as a representation of {\em marginal functions}. More precisely, set $F_{\phi}(t_1,t_2,\cdot,x,\phi(\cdot))=\phi(\cdot)+h_L(t_1,t_2,y,x,\phi(\cdot))$, If the infimum in the definition of $(\mathbf{T}^{t_2}_{t_1})\phi$ can be achieved in a compact subset $S\subset\R^n$, i.e.,
  \begin{align*}
  	(\mathbf{T}^{t_2}_{t_1})\phi(x)=\inf_{y\in S}F_{\phi}(t_1,t_2,y,x,\phi(y))
  \end{align*}
  then the Lipschitz and semiconcavity estimates can be obtained directly from the {\em uniform} Lipschitz and semiconcavity estimates for $h_L$ (see, for instance, \cite[Theorem 3.4.4]{Cannarsa_Sinestrari_book}). This is also a key point of our program for the study of the propagation of singularities of viscosity solutions (see, for instance, \cite{Cannarsa_Cheng3,CCMW2018,Cannarsa_Cheng_Fathi2017,Cannarsa_Chen_Cheng2019}).
\end{itemize}
\end{Rem}


\begin{Lem}\label{inf_min}
Let $t_2>t_1$ and $x\in\R^n$. If the function $\phi:\R^n\to\R$ is lower semi-continuous and $(\kappa_1,\kappa_2)$-Lipschitz in the large, then there exists $y\in\R^n$ such that $(\mathbf{T}^{t_2}_{t_1}\phi)(x)=\phi(y)+h_L(t_1,t_2,y,x,\phi(y))$. Moreover, for such a minimizer $y$ we have
\begin{equation}\label{eq:r_min}
	|y-x|\leqslant\kappa_1+\{c_0+\overline{\theta}_0(0)+\theta^*(\kappa_2+e^{2K(t_2-t_1)})+|\phi(x)|C\}(t_2-t_1)
\end{equation}
where $C=\sup_{t>0}(1-e^{-2Kt})/t$.
\end{Lem}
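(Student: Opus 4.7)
The lemma has two parts: showing attainment of the infimum, and deriving the quantitative bound \eqref{eq:r_min} on any minimizer $y$. I would handle both at once by first deriving the distance bound for near-minimizers, which confines the infimum to a compact set, and then concluding attainment from lower semi-continuity. The argument rests on comparing an explicit upper bound for $(\mathbf{T}^{t_2}_{t_1}\phi)(x)$ (obtained by testing with $y = x$ and the constant curve $\xi \equiv x$) with a superlinearly growing lower bound for $\phi(y) + h_L(t_1,t_2,y,x,\phi(y))$.

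\textbf{Upper bound.} Test the infimum defining $(\mathbf{T}^{t_2}_{t_1}\phi)(x)$ with $y = x$ and the constant curve $\xi \equiv x$ on $[t_1,t_2]$. The associated $u_\xi$ solves $\dot u_\xi = L(s,x,0,u_\xi)$ with $u_\xi(t_1) = \phi(x)$. By (L2) with $c_1 = 0$ (Remark \ref{rem:constants}) and (L3),
\begin{equation*}
	-c_0 - K|u_\xi| \leq L(s,x,0,u_\xi) \leq \overline{\theta}_0(0) + K|u_\xi|,
\end{equation*}
so a Gronwall estimate applied to $|u_\xi|$, followed by integration of the upper bound on $L$, yields an inequality of the form
\begin{equation*}
	(\mathbf{T}^{t_2}_{t_1}\phi)(x) \leq \phi(x) + \bigl(\overline{\theta}_0(0) + c_0 + |\phi(x)| C\bigr)(t_2-t_1),
\end{equation*}
with $C = \sup_{t>0}(1-e^{-2Kt})/t$ appearing through the double-integration of $\int |u_\xi|\,ds$.

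\textbf{Lower bound for large $|y-x|$.} For arbitrary $y \in \R^n$, invoke Proposition \ref{existence_intro} to choose a minimizer $\xi \in \mathcal{A}^{t_1,t_2}_{y,x}$ so that $u_\xi(t_2) = \phi(y) + h_L(t_1,t_2,y,x,\phi(y))$. From (L2)-(L3),
\begin{equation*}
	\dot u_\xi \geq \theta_0(|\dot\xi|) - c_0 - K|u_\xi|.
\end{equation*}
Multiplying by the integrating factor $e^{K(s-t_1)}$ (splitting according to the sign of $u_\xi$ and using $|u_\xi| = u_\xi^+ + u_\xi^-$), integrating over $[t_1,t_2]$, and applying Jensen's inequality to the convex $\theta_0$ produces a bound
\begin{equation*}
	u_\xi(t_2) \geq e^{-K(t_2-t_1)}\phi(y) + (t_2-t_1)\,\theta_0\!\left(\frac{|y-x|}{t_2-t_1}\right) e^{-K(t_2-t_1)} - c_0(t_2-t_1).
\end{equation*}
Combined with the Lipschitz-in-large bound $\phi(y) \geq \phi(x) - \kappa_1 - \kappa_2|y-x|$, this gives a lower bound on $\phi(y) + h_L(t_1,t_2,y,x,\phi(y))$ that is superlinear in $|y-x|$.

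\textbf{Extraction of \eqref{eq:r_min} and attainment.} Comparing the upper and lower bounds for any near-minimizer $y$ and applying Fenchel-Young in the form
\begin{equation*}
	\bigl(\kappa_2 + e^{2K(t_2-t_1)}\bigr)\frac{|y-x|}{t_2-t_1} \leq \theta_0\!\left(\frac{|y-x|}{t_2-t_1}\right) + \theta^*\!\bigl(\kappa_2 + e^{2K(t_2-t_1)}\bigr),
\end{equation*}
where the factor $e^{2K(t_2-t_1)}$ emerges from combining the Gronwall exponentials appearing in the two previous steps, one obtains \eqref{eq:r_min}. This confines the infimum in the definition of $(\mathbf{T}^{t_2}_{t_1}\phi)(x)$ to a compact ball. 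Since $\phi$ is lower semi-continuous and $(y,u) \mapsto h_L(t_1,t_2,y,x,u)$ is continuous (by continuous dependence in the Carath\'eodory ODE, Proposition \ref{caratheodory}, combined with the uniform Lipschitz estimate for minimizers of Theorem \ref{Lip} and a standard stability argument), the map $y \mapsto \phi(y) + h_L(t_1,t_2,y,x,\phi(y))$ is lower semi-continuous, hence attains its infimum on the compact ball; the bound then transfers to the minimizer.

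\textbf{Main obstacle.} The delicate point is producing the precise argument $\kappa_2 + e^{2K(t_2-t_1)}$ of $\theta^*$ and the coefficient $|\phi(x)|C$ in \eqref{eq:r_min}. This requires careful bookkeeping of the Gronwall factors, in particular a reabsorption of the $-K\int|u_\xi|\,ds$ term in the lower bound by using the upper-bound argument one more time, which is what upgrades the exponent from $e^{K(t_2-t_1)}$ to $e^{2K(t_2-t_1)}$. Sign changes of $u_\xi$ (since neither $\phi(x)$ nor $\phi(y)$ has a definite sign) force one to work with $u_\xi^\pm$ and invoke a comparison principle. Verifying continuity of $h_L$ jointly in $(y,u)$ at fixed $x$ is the other technical step; it uses stability of the Carath\'eodory ODE and of the Tonelli minimizer under perturbations of the endpoints and of the initial datum.
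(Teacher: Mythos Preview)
Your overall strategy matches the paper's: compare the value at $y=x$ (constant curve, initial datum $\phi(x)$) with the value at arbitrary $y$ (minimizing curve, initial datum $\phi(y)$), use Fenchel--Young for $\theta_0$ to extract a bound on $|y-x|$, and conclude attainment by lower semi-continuity on the resulting compact set $\Lambda_x$. The technical execution differs in one point that dissolves the ``main obstacle'' you flag. Instead of the differential inequality $\dot u_\xi \geq \theta_0(|\dot\xi|)-c_0-K|u_\xi|$ and a sign-splitting Gronwall argument, the paper rewrites the Carath\'eodory equation \emph{exactly} as a linear ODE in $u_\xi$,
\[
\dot u_{\xi}(s)=L(s,\xi,\dot\xi,0)+\widehat{L_u}(s)\,u_\xi(s),\qquad \widehat{L_u}(s)=\int_0^1 L_u(s,\xi,\dot\xi,\lambda u_\xi)\,d\lambda,
\]
with $|\widehat{L_u}|\leq K$ by (L3). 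The variation-of-constants formula then gives $u_\xi(t_2)$ exactly, and bounding $e^{\int\widehat{L_u}}$ by $e^{\pm K(t_2-t_1)}$ handles both signs of $u_\xi$ at once---no comparison principle or $u_\xi^\pm$ bookkeeping is needed. The same device is applied to the constant curve $\eta\equiv x$, and the key algebraic step is
\[
e^{\int\widehat{L_u}}\phi(y)-e^{\int\widetilde{L_u}}\phi(x)\geq -e^{K(t_2-t_1)}|\phi(y)-\phi(x)|-(e^{K(t_2-t_1)}-e^{-K(t_2-t_1)})|\phi(x)|,
\]
which is where the $|\phi(x)|C$ term and (after dividing through by $e^{-K(t_2-t_1)}$ and choosing $a=\kappa_2+e^{2K(t_2-t_1)}$ in Fenchel--Young) the argument $\kappa_2+e^{2K(t_2-t_1)}$ of $\theta^*$ come from. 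Your route would reach the same conclusion, but the linearization via $\widehat{L_u}$ is cleaner and makes the constants fall out without the reabsorption step you anticipate.
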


\begin{proof}
Fix $t_2>t_1$ and $x\in\R^n$. For any $y\in\R^n$, let $\xi_y$ be a minimizer for $h_L(t_1,t_2,y,x,\phi(y))$ and $u_{\xi_y}$ is determined by
\begin{align*}
	\begin{cases}
		\dot{u}_{\xi_y}(s)=L(s,\xi_y(s),\dot{\xi}_y(s),u_{\xi_y}(s)),\quad s\in[t_1,t_2],&\\
		u_{\xi_y}(t_1)=\phi(y).&
	\end{cases}
\end{align*}
It follows that
\begin{equation}\label{eq:Cara_ODE}
	\dot{u}_{\xi_y}(s)=L(s,\xi_y(s),\dot{\xi}_y(s),0)+\widehat{L_u}(s)\cdot u_{\xi_y}(s),
\end{equation}
where $\widehat{L_u}(s)=\int^1_0L_u(s,\xi_y(s),\dot{\xi}_y(s),\lambda u_{\xi_y}(s))\ d\lambda$. Solving \eqref{eq:Cara_ODE}, we obtain that
\begin{equation}\label{eq:solving1}
	\begin{split}
		u_{\xi_y}(t_2)=&\,e^{\int^{t_2}_{t_1}\widehat{L_u}\ ds}\phi(y)+e^{\int^{t_2}_{t_1}\widehat{L_u}\ ds}\int^{t_2}_{t_1}e^{-\int^s_{t_1}\widehat{L_u}\ d\tau}L(s,\xi_y(s),\dot{\xi}_y(s),0)\ ds\\
		\geqslant&\,e^{\int^{t_2}_{t_1}\widehat{L_u}\ ds}\phi(y)+\int^{t_2}_{t_1}e^{\int^{t_2}_s\widehat{L_u}\ d\tau}(\theta_0(|\dot{\xi}_y(s)|)-c_0)\ ds\\
		\geqslant&\,e^{\int^{t_2}_{t_1}\widehat{L_u}\ ds}\phi(y)+e^{-K(t_2-t_1)}\int^{t_2}_{t_1}\theta_0(|\dot{\xi}_y(s)|)\ ds-c_0(t_2-t_1)e^{K(t_2-t_1)}.
	\end{split}
\end{equation}
Let $\eta(s)\equiv x$ for $s\in[0,t]$ and $u_{\eta}$ satisfies
\begin{align*}
	\begin{cases}
		\dot{u}_{\eta}(s)=L(s,\eta(s),\dot{\eta}(s),u_{\eta}(s))=L(s,x,0,u_{\eta}(s)),\quad s\in[t_1,t_2],&\\
		u_{\eta}(t_1)=\phi(x).&
	\end{cases}
\end{align*}
Similarly, we have that
\begin{equation}\label{eq:solving2}
	\begin{split}
		u_{\eta}(t_2)=&\,e^{\int^{t_2}_{t_1}\widetilde{L_u}\ ds}\phi(x)+e^{\int^{t_2}_{t_1}\widetilde{L_u}\ ds}\int^{t_2}_{t_1}e^{-\int^s_{t_1}\widetilde{L_u}\ d\tau}L(s,x,0,0)\ ds\\
		\leqslant&\,e^{\int^{t_2}_{t_1}\widetilde{L_u}\ ds}\phi(x)+\int^{t_2}_{t_1}e^{\int^{t_2}_s\widetilde{L_u}\ d\tau}\overline{\theta}_0(0)\ ds\\
		\leqslant&\,e^{\int^{t_2}_{t_1}\widetilde{L_u}\ ds}\phi(x)+\overline{\theta}_0(0)(t_2-t_1)e^{K(t_2-t_1)},
	\end{split}
\end{equation}
where $\widetilde{L_u}(s)=\int^1_0L_u(s,x,0,\lambda u_{\eta}(s))\ d\lambda$. Combining \eqref{eq:solving1} and \eqref{eq:solving2} we obtain that
\begin{align*}
	&\,(\phi(y)+h_L(t_1,t_2,y,x,\phi(y)))-(\phi(x)+h_L(t_1,t_2,x,x,\phi(x)))=u_{\xi_y}(t_2)-u_{\eta}(t_2)\\
	\geqslant&\,e^{\int^{t_2}_{t_1}\widehat{L_u}\ ds}\phi(y)-e^{\int^{t_2}_{t_1}\widetilde{L_u}\ ds}\phi(x)+e^{-K(t_2-t_1)}\int^{t_2}_{t_1}\theta_0(|\dot{\xi}_y(s)|)\ ds\\
	&\,-(c_0+\overline{\theta}_0(0))(t_2-t_1)e^{K(t_2-t_1)}\\
	\geqslant&\,-e^{K(t_2-t_1)}|\phi(y)-\phi(x)|-(e^{K(t_2-t_1)}-e^{-K(t_2-t_1)})|\phi(x)|\\
	&\,+e^{-K(t_2-t_1)}\int^{t_2}_{t_1}\theta_0(|\dot{\xi}_y(s)|)\ ds-(c_0+\overline{\theta}_0(0))(t_2-t_1)e^{K(t_2-t_1)}.
\end{align*}
Set $\Lambda_x=\{y\in\R^n: (\phi(y)+h_L(t_1,t_2,y,x,\phi(y)))-(\phi(x)+h_L(t_1,t_2,x,x,\phi(x)))\leqslant0\}$. Notice $\Lambda_x$ is closed since $\phi$ is lower semi-continuous. Recalling that $\phi$ is $(\kappa_1,\kappa_2)$-Lipschitz in the large, for any $y\in\Lambda_x$ and any $a>0$ we have that
\begin{align*}
	0\geqslant&\,-e^{-K(t_2-t_1)}(\kappa_1+\kappa_2|y-x|)-(e^{K(t_2-t_1)}-e^{-K(t_2-t_1)})|\phi(x)|\\
	&\,+e^{-K(t_2-t_1)}\inf_{\xi\in\Gamma^{t_1,t_2}_{y,x}}\int^{t_2}_{t_1}\theta_0(|\dot{\xi}(s)|)\ ds-(c_0+\overline{\theta}_0(0))(t_2-t_1)e^{K(t_2-t_1)}\\
	\geqslant&\,-e^{-K(t_2-t_1)}(\kappa_1+\kappa_2|y-x|)-(e^{K(t_2-t_1)}-e^{-K(t_2-t_1)})|\phi(x)|\\
	&\,+e^{-K(t_2-t_1)}\inf_{\xi\in\Gamma^{t_1,t_2}_{y,x}}\int^{t_2}_{t_1}(a|\dot{\xi}(s)|-\theta^*(a))\ ds-(c_0+\overline{\theta}_0(0))(t_2-t_1)e^{K(t_2-t_1)}\\
	\geqslant&\,-e^{-K(t_2-t_1)}(\kappa_1+\kappa_2|y-x|)-(e^{K(t_2-t_1)}-e^{-K(t_2-t_1)})|\phi(x)|+ae^{-K(t_2-t_1)}|y-x|\\
	&\,-(c_0+\overline{\theta}_0(0))(t_2-t_1)e^{K(t_2-t_1)}-\theta^*(a)(t_2-t_1)e^{-K(t_2-t_1)}.
\end{align*}
It follows
\begin{align*}
	&\,e^{-2K(t_2-t_1)}(a-\kappa_2)|y-x|\\
	\leqslant&\,\kappa_1e^{-2K(t_2-t_1)}+|\phi(x)|C(t_2-t_1)+(c_0+\overline{\theta}_0(0)+\theta^*(a))(t_2-t_1).
\end{align*}
Taking $a=\kappa_2+e^{2K(t_2-t_1)}$, then \eqref{eq:r_min} follows. Thus the set $\Lambda_x$ is compact and the proof is complete.
\end{proof}

\subsection{Representation formula}

In this section, we want to give a representation formula for the viscosity solution of \eqref{eq:HJe_t} in the form of Lax-Oleinik evolution $u(t,x)$ defined as follows: for any $\phi:\R^n\to[-\infty,+\infty]$, set
\begin{equation}\label{eq:value_function}
\begin{split}
	u(t,x)=&\,(\mathbf{T}^{t}_{0}\phi)(x)=\inf_{y\in\R^n}\{\phi(y)+h_L(0,t,y,x,\phi(y))\}\\
	=&\,\inf_{\xi}\left\{\int^t_0L(s,\xi,\dot{\xi},u_{\xi})\ ds+\phi(\xi(0))\right\},
\end{split}
\end{equation}
where the infimum is taken over the set
\begin{align*}
	\mathcal{A}_{t,x}=\{\xi\in W^{1,1}([0,t],\R^n): \xi(t)=x\},
\end{align*}
and $u_\xi$ satisfies the Carath\'eodory equation
\begin{equation}\label{eq:Cara_1}
	\begin{cases}
		\dot{u}_{\xi}(s)=L(s,\xi(s),\dot{\xi}(s),u_{\xi}(s)),\quad a.e.\ s\in[0,t],&\\
		u_{\xi}(0)=\phi(\xi(0)).&
	\end{cases}
\end{equation}

The following principle of dynamic programming is analogous to the classical one.

\begin{Pro}[dynamic programming]\label{dynamic_programming_principle}
Let $(t,x)\in(0,+\infty)\times\R^n$ and $\xi\in\mathcal{A}_{t,x}$. Then for any $t'\in[0,t]$ we have that
\begin{equation}\label{eq:dynamical_programming}
	u(t,x)\leqslant\int^t_{t'}L(s,\xi(s),\dot{\xi}(s),u_{\xi}(s))\ ds+u(t',\xi(t')),
\end{equation}
where $u_{\xi}$ is determined by
\begin{equation}\label{eq:Cara_2}
	\begin{cases}
		\dot{u}_{\xi}(s)=L(s,\xi(s),\dot{\xi}(s),u_{\xi}(s)),\quad a.e.\ s\in[t',t],&\\
		u_{\xi}(t')=u(t',\xi(t')).&
	\end{cases}
\end{equation}
In addition, $\xi\in\mathcal{A}_{t,x}$ is a minimizer for \eqref{eq:value_function} if and only if the equality holds in \eqref{eq:dynamical_programming} for all $t'\in[0.t]$.
\end{Pro}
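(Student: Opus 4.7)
My plan is to establish \eqref{eq:dynamical_programming} by a concatenation argument, then to characterize minimizers by combining it with a sign-preservation property of the Carath\'eodory equation. Fix $\xi \in \mathcal{A}_{t,x}$, $t' \in [0, t]$ and $\varepsilon > 0$. From the definition of $u(t', \xi(t'))$ as an infimum I would select a curve $\eta \in \mathcal{A}_{t', \xi(t')}$ with
\[\phi(\eta(0)) + \int_0^{t'} L(s, \eta(s), \dot\eta(s), u_\eta(s))\, ds < u(t', \xi(t')) + \varepsilon,\]
where $u_\eta$ solves \eqref{eq:Cara_1} on $[0, t']$. Gluing $\eta|_{[0, t']}$ to $\xi|_{[t', t]}$ produces a competitor $\tilde\xi \in \mathcal{A}_{t, x}$ (absolute continuity is preserved because $\eta(t') = \xi(t')$). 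By uniqueness of Carath\'eodory solutions the associated $u_{\tilde\xi}$ coincides with $u_\eta$ on $[0, t']$, so $u_{\tilde\xi}(t') < u(t', \xi(t')) + \varepsilon$; and by definition, $u(t, x) \leqslant u_{\tilde\xi}(t)$.

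The main obstacle is to reconcile the $u_\xi$ prescribed in \eqref{eq:Cara_2} with the trajectory $u_{\tilde\xi}$: on $[t', t]$ both satisfy $\dot u = L(s, \xi, \dot\xi, u)$, yet their initial data at $s = t'$ differ by less than $\varepsilon$. Setting $w := u_{\tilde\xi} - u_\xi$, a direct computation yields
\[\dot w(s) = \widehat{L_u}(s)\, w(s), \qquad \widehat{L_u}(s) := \int_0^1 L_u\bigl(s, \xi(s), \dot\xi(s), u_\xi(s) + \lambda w(s)\bigr)\, d\lambda,\]
with $|\widehat{L_u}| \leqslant K$ by condition (L3). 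Solving explicitly, $w(s) = w(t')\exp\!\bigl(\int_{t'}^s \widehat{L_u}\, d\tau\bigr)$, which furnishes simultaneously (i) the bound $|w(t)| \leqslant \varepsilon\, e^{K(t - t')}$ and (ii) the sign-preservation property that $w(s)$ has the same sign as $w(t')$. Combining (i) with the identity $u_\xi(t) = u(t', \xi(t')) + \int_{t'}^t L(s, \xi, \dot\xi, u_\xi)\, ds$ and letting $\varepsilon \to 0^+$ yields \eqref{eq:dynamical_programming}.

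For the characterization of minimizers, the ``if'' direction is immediate upon taking $t' = 0$: since the only admissible curve in $\mathcal{A}^{0, 0}_{y, y}$ is the constant one, $h_L(0, 0, \cdot, \cdot, \cdot) \equiv 0$ and hence $u(0, \cdot) = \phi$; equality in \eqref{eq:dynamical_programming} at $t' = 0$ then reads exactly as the statement that $\xi$ minimizes \eqref{eq:value_function}. For the converse, suppose $\xi$ is a minimizer and let $\tilde u_\xi$ solve \eqref{eq:Cara_1} on $[0, t]$ with $\tilde u_\xi(0) = \phi(\xi(0))$, so $\tilde u_\xi(t) = u(t, x)$. For any $t' \in [0, t]$, the restriction $\xi|_{[0, t']}$ is a competitor in $u(t', \xi(t'))$ with action $\tilde u_\xi(t')$, giving $u(t', \xi(t')) \leqslant \tilde u_\xi(t')$. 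Were this inequality strict, the sign-preservation (ii) applied to the two solutions $u_\xi$ and $\tilde u_\xi$ of $\dot u = L(s, \xi, \dot\xi, u)$ on $[t', t]$ would yield $u_\xi(t) < \tilde u_\xi(t) = u(t, x)$, contradicting the dynamic programming inequality $u(t, x) \leqslant u_\xi(t)$ already proved. Hence equality must hold at every $t'$.
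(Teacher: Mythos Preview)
Your proof is correct and follows the same concatenation strategy as the paper: glue an (almost) optimal curve on $[0,t']$ to $\xi|_{[t',t]}$, use it as a competitor for $u(t,x)$, and for the converse exploit that $\xi|_{[0,t']}$ is a competitor for $u(t',\xi(t'))$.

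The one substantive difference is that you make explicit the monotone dependence of solutions of the Carath\'eodory equation on initial data (your sign-preservation identity $w(s)=w(t')\exp\bigl(\int_{t'}^{s}\widehat{L_u}\,d\tau\bigr)$), whereas the paper's proof takes the infimum over $\eta$ and tacitly identifies the $u_\xi$ of \eqref{eq:Cara_2} with the restriction of $u_\gamma$ without discussing the mismatch in initial values. Your explicit treatment is an improvement: it is precisely what justifies passing from the concatenated competitor to the $u_\xi$ prescribed in the statement, and it cleanly closes the converse by contradiction. A minor quibble: rather than invoking $h_L(0,0,\cdot,\cdot,\cdot)\equiv 0$ (which is not defined for $t_1=t_2$), just cite the convention $\mathbf T^0_0=\mathrm{id}$ to get $u(0,\cdot)=\phi$.
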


\begin{proof}
Fix $t>0$ and $x\in\R^n$. Let $t'\in[0,t]$ and $\eta:[0,t]\to\R^n$ be any absolutely continuous function on $[0.t']$ such that $\eta(t')=\xi(t')$. Set
\begin{align*}
\gamma(s)=
\begin{cases}
	\eta(s),& s\in[0,t'];\\
		\xi(s),& s\in[t',t],
\end{cases}
\end{align*}
and
\begin{align*}
	\begin{cases}
		\dot{u}_{\gamma}(s)=L(s,\gamma(s),\dot{\gamma}(s),u_{\gamma}(s)),\quad a.e.\ s\in[0,t],&\\
		u_{\gamma}(0)=\phi(\gamma(0))=\phi(\eta(0)).&
	\end{cases}
\end{align*}
It follows that
\begin{align*}
	u_{\gamma}(t')=&\,\int^{t'}_0L(s,\gamma,\dot{\gamma},u_{\gamma})\ ds+\phi(\eta(0))\\
	u_{\gamma}(t)=&\,\int^{t}_{t'}L(s,\gamma,\dot{\gamma},u_{\gamma})\ ds+u_{\gamma}(t')\\
	=&\,\int^{t}_{t'}L(s,\gamma,\dot{\gamma},u_{\gamma})\ ds+\int^{t'}_0L(s,\gamma,\dot{\gamma},u_{\gamma})\ ds+\phi(\eta(0))
\end{align*}
Therefore
\begin{align*}
	u(t,x)\leqslant&\,u_{\gamma}(t)\leqslant\int^t_{t'}L(s,\xi,\dot{\xi},u_{\xi})\ ds+\int^{t'}_0L(s,\eta,\dot{\eta},u_{\eta})\ ds+\phi(\eta(0)),
\end{align*}
where $u_{\xi}$ and $u_{\eta}$ are the restriction of $u_{\gamma}$ on $[t,t']$ and $[0.t']$ respectively. Taking the infimum over all $\eta$ and recalling that $\xi(t')=\eta(t')$ we obtain \eqref{eq:dynamical_programming}

Now we turn to the proof of the last assertion. If the equality holds in \eqref{eq:dynamical_programming} for all $t'\in[0,t]$, then choosing $t'=0$ yielding that $\xi$ is a minimizer for \eqref{eq:value_function}. Conversely, if $\xi$ is a minimizer for \eqref{eq:value_function}, by \eqref{eq:dynamical_programming} we obtain that for all $t'\in[0,t]$
\begin{equation}\label{eq:dp}
	\begin{split}
		\int^t_0L(s,\xi,\dot{\xi},u_{\xi})\ ds+\phi(\xi(0))=&\,u(t,x)\\
	\leqslant&\,\int^t_{t'}L(s,\xi,\dot{\xi},u_{\xi})\ ds+u(t',\xi(t')),
	\end{split}
\end{equation}
where $u_{\xi}$ is determined by \eqref{eq:Cara_2}. Invoking the definition of $u(t',\xi(t'))$, this implies the inequality in \eqref{eq:dp} is indeed an equality. It follows that the restriction of $\xi$ on $[0,t']$ is a minimizer for $u(t',\xi(t'))$.
\end{proof}

\begin{Pro}
Let $\phi:\R^n\to\R$ be lower semi-continuous and $(\kappa_1,\kappa_2)$-Lipschitz in the large, and let $t>0$. Then the following holds true.
\begin{enumerate}[\rm (1)]
  \item $u(t,x)$ is finite-valued for all $t>0$ and $x\in\R^n$. Moreover the infimum in the definition of $u(t,x)$ is achieved by some $y_{t,x}\in\R^n$.
  \item Suppose that for any $\varepsilon>0$ there exists $K_{\varepsilon}>0$ such that $\phi$ is $(\varepsilon,K_{\varepsilon})$-Lipschitz in the large\footnote{As mentioned in Example \ref{example:Lip_in_large}, a uniformly continuous function on $\R^n$ is $(\varepsilon,K_{\varepsilon})$ Lipschitz in the large.}. Then $\lim_{t\to0^+}|y_{t,x}-x|=0$.
  \item If $\phi$ is bounded and Lipschitz with constant $\mbox{\rm Lip}\,(\phi)$, then there exists $\mu(t)>0$ such that $|y_{t,x}-x|\leqslant\mu(t)t$ for all $t>0$. Moreover, one can take $\mu(t)=c_0+\overline{\theta}_0(0)+\theta^*(\mbox{\rm Lip}\,(\phi)+e^{2Kt})+C\|\phi\|_{\infty}$ for some constant $C>0$.
\end{enumerate}
\end{Pro}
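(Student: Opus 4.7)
The plan is to derive all three assertions as direct specializations of Lemma \ref{inf_min} to $t_1 = 0$, $t_2 = t$. That lemma already packages the key quantitative ingredient, so what remains is essentially bookkeeping with three different choices of the Lipschitz-in-the-large pair $(\kappa_1,\kappa_2)$.

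For part (1), I would invoke Lemma \ref{inf_min} with the given Lipschitz-in-the-large constants of $\phi$; this immediately produces a minimizer $y_{t,x}\in\R^n$, and in particular rules out $u(t,x)=-\infty$. For the upper bound $u(t,x)<+\infty$, the plan is to test \eqref{eq:value_function} against the constant curve $\eta\equiv x\in\mathcal{A}_{t,x}$: the associated Carath\'eodory ODE $\dot u_\eta = L(s,x,0,u_\eta)$, $u_\eta(0)=\phi(x)$, has a right-hand side bounded by $\overline{\theta}_0(0)+K|u_\eta|$ thanks to (L2) and (L3), so Gronwall's inequality yields a finite value $u_\eta(t)$ and hence $u(t,x)\leq u_\eta(t)<+\infty$.

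For part (2), I would substitute $(\kappa_1,\kappa_2)=(\varepsilon,K_\varepsilon)$ into the estimate \eqref{eq:r_min}, obtaining
\begin{align*}
|y_{t,x}-x|\leq \varepsilon+\bigl\{c_0+\overline{\theta}_0(0)+\theta^*(K_\varepsilon+e^{2Kt})+C|\phi(x)|\bigr\}\,t.
\end{align*}
Letting $t\to 0^+$ with $\varepsilon$ (and the corresponding $K_\varepsilon$) held fixed gives $\limsup_{t\to 0^+}|y_{t,x}-x|\leq \varepsilon$, and then letting $\varepsilon\to 0^+$ concludes. For part (3), the analogous substitution $(\kappa_1,\kappa_2)=(0,\mathrm{Lip}(\phi))$ combined with the pointwise bound $|\phi(x)|\leq\|\phi\|_\infty$ produces exactly
\begin{align*}
|y_{t,x}-x|\leq\bigl\{c_0+\overline{\theta}_0(0)+\theta^*(\mathrm{Lip}(\phi)+e^{2Kt})+C\|\phi\|_\infty\bigr\}\,t,
\end{align*}
so $\mu(t):=c_0+\overline{\theta}_0(0)+\theta^*(\mathrm{Lip}(\phi)+e^{2Kt})+C\|\phi\|_\infty$ does the job.

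There is no real obstacle here: the heavy lifting, namely existence of the minimizer and its quantitative localization via the coercive estimate from (L2) and the Fenchel inequality $\theta_0(|v|)\geq a|v|-\theta_0^*(a)$ used to confine $\Lambda_x$ to a compact set, has already been carried out inside Lemma \ref{inf_min}. The only place to be careful is checking finiteness of $u(t,x)$ from above in part (1), which the constant-curve test handles in one line via Gronwall. Everything else is a transparent reduction.
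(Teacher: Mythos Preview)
Your proposal is correct and follows essentially the same route as the paper: all three parts are read off from Lemma \ref{inf_min} with the appropriate choice of $(\kappa_1,\kappa_2)$. The extra Gronwall argument you add for the upper bound in (1) is harmless but not needed, since that estimate is already contained in the proof of Lemma \ref{inf_min} (the computation \eqref{eq:solving2} for the constant curve $\eta\equiv x$).
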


\begin{proof}
Assertion (1) is a reformulation of Lemma \ref{inf_min}. For the proof of (2), set $r_{\varepsilon}=\overline{c}_0+\overline{\theta}_0(0)+\overline{c}_1+\theta^*(K_{\varepsilon}+e^{2Kt})+|\phi(x)|C$. By Lemma \ref{inf_min} we conclude
\begin{align*}
	|y_{t,x}-x|\leqslant\varepsilon+r_{\varepsilon}t.
\end{align*}
This implies $\lim_{t\to0^+}|y_{t,x}-x|=0$. The last assertion (3) is obvious since $\phi$ is  $(0,\mbox{\rm Lip}\,(\phi))$-Lipschitz in the large.
\end{proof}

\begin{Pro}
If $\phi$ is lower semi-continuous and $(\kappa_1,\kappa_2)$-Lipschitz in the large, then $u(t,x)$ defined in \eqref{eq:value_function} is a viscosity solution of \eqref{eq:HJe_t}. 	
\end{Pro}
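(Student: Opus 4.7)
Both viscosity inequalities will be read off from the dynamic programming principle of Proposition~\ref{dynamic_programming_principle}, in the standard calibrated-comparison style. As a preliminary step I would first verify enough continuity of $u$ on $(0, \infty) \times \R^n$ so that all the passages to the limit below are legitimate. This follows by combining Lemma~\ref{inf_min} (which confines the optimal initial point $y_{t,x}$ to a compact set) with the uniform Lipschitz estimate for minimizers (Theorem~\ref{Lip}) and a Grönwall control on the Carathéodory ODE using (L3). Together these imply that $h_L(t_1, t_2, \cdot, \cdot, \cdot)$ is jointly continuous in its spatial and energy arguments, uniformly for $(t_1, t_2)$ in compact subsets of $\{t_1 < t_2\}$, and hence that $u$ is continuous on $(0, \infty) \times \R^n$.

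For the subsolution inequality, let $\psi \in C^1$ with $u - \psi$ attaining a local maximum at $(t_0, x_0)$, $t_0 > 0$. Fix $v \in \R^n$ and $h > 0$ small, set $\xi(s) := x_0 + (s - t_0) v$ on $[t_0 - h, t_0]$, and let $u_\xi$ solve \eqref{eq:Cara_2} with $u_\xi(t_0 - h) = u(t_0 - h, \xi(t_0 - h))$. Dynamic programming and the local-maximum property give
\begin{align*}
\psi(t_0, x_0) - \psi(t_0 - h, \xi(t_0 - h)) \leq \int_{t_0 - h}^{t_0} L(s, \xi(s), v, u_\xi(s))\, ds.
\end{align*}
Dividing by $h$ and letting $h \to 0^+$, the preliminary continuity ensures $u_\xi(s) \to u(t_0, x_0)$, yielding $D_t\psi(t_0, x_0) + D_x\psi(t_0, x_0) \cdot v \leq L(t_0, x_0, v, u(t_0, x_0))$. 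Taking the supremum over $v$ and invoking the definition of $H$ delivers the subsolution inequality.

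For the supersolution inequality, let $u - \psi$ attain a local minimum at $(t_0, x_0)$. By Lemma~\ref{inf_min} pick $y \in \R^n$ realizing the infimum in the definition of $u(t_0, x_0)$, and by Proposition~\ref{Herglotz_Lie} together with the Corollary following Theorem~\ref{thm:EL} choose a $C^1$ minimizing pair $(\xi^*, u_{\xi^*})$ on $[0, t_0]$ connecting $y$ to $x_0$. The equality case of Proposition~\ref{dynamic_programming_principle} gives, for all $h \in (0, t_0]$,
\begin{align*}
u(t_0, x_0) - u(t_0 - h, \xi^*(t_0 - h)) = \int_{t_0 - h}^{t_0} L(s, \xi^*(s), \dot\xi^*(s), u_{\xi^*}(s))\, ds.
\end{align*}
Combining with the local-minimum property, dividing by $h$, and letting $h \to 0^+$, with $v^* := \dot\xi^*(t_0)$, yields
\begin{align*}
D_t\psi(t_0, x_0) + D_x\psi(t_0, x_0) \cdot v^* \geq L(t_0, x_0, v^*, u(t_0, x_0)).
\end{align*}
Since $H$ is the Legendre dual of $L$ in $v$, the right-hand side bounds $D_x\psi(t_0, x_0) \cdot v^* - H(t_0, x_0, D_x\psi(t_0, x_0), u(t_0, x_0))$ from above, and the supersolution inequality follows.

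The main obstacle is the preliminary continuity of $u$ on $(0, \infty) \times \R^n$, especially the convergence $u_\xi(t_0 - h) \to u(t_0, x_0)$ and $u(t_0 - h, \xi(t_0 - h)) \to u(t_0, x_0)$ along the specific curves used above. This requires showing joint continuity of $h_L(t_1, t_2, x, y, u)$ in its five arguments, with uniform constants over compacta; the $u$-dependence is handled by Grönwall and (L3), while the spatial and temporal regularity rest on the Lipschitz estimates of Theorem~\ref{Lip} together with the localization of minimizers provided by Lemma~\ref{inf_min}. Once these tools are assembled, the remainder is a routine adaptation of the classical viscosity-solution method to the implicit action $u_\xi$ determined by the Carathéodory ODE, and the initial condition $u(0, \cdot) = \phi(\cdot)$ (in the lower-semicontinuous sense appropriate here) follows from the short-time bound $|y_{t,x} - x| \to 0$ established in the preceding proposition together with the lower semicontinuity of $\phi$.
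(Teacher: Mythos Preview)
Your proposal is correct and follows essentially the same route as the paper: both read off the sub- and supersolution inequalities from the dynamic programming principle (Proposition~\ref{dynamic_programming_principle}), using an arbitrary test curve for the subsolution and a minimizing curve supplied by Lemma~\ref{inf_min} together with Proposition~\ref{Herglotz_Lie} for the supersolution. Your version is in fact more scrupulous, since you explicitly isolate the continuity of $u$ needed for the limit $u_\xi(s)\to u(t_0,x_0)$, a point the paper passes over silently.
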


\begin{Rem}
Uniqueness results for \eqref{eq:HJe_t} hold under further regularity assumptions. See, for instance, \cite[Theorem 5.2]{Barles2013}).
\end{Rem}

\begin{proof}
Fix $t>0$ and $x\in\R^n$. Suppose that $\varphi$ is a $C^1$-function on $(0,\infty)\times\R^n$ such that $u-\varphi$ attains a local maximum at $(t,x)\in U$, a neighborhood of $(t,x)$ in $(0,\infty)\times\R^n$. For any $(t',x')\in U$ ($t'<t$) and any $C^1$ curve $\xi\in\Gamma^{t',t}_{x',x}$, we conclude that
\begin{align*}
	u(t',\xi(t'))-\varphi(t',\xi(t'))\leqslant u(t,\xi(t))-\varphi(t,\xi(t)).
\end{align*}
Invoking dynamic programming principle (Proposition \ref{dynamic_programming_principle}) we obtain that
\begin{align*}
	\frac{\varphi(t,\xi(t))-\varphi(t',\xi(t'))}{t-t'}\leqslant\frac{u(t,x)-u(t',x')}{t-t'}\leqslant\frac 1{t-t'}\int^{t}_{t'}L(s,\xi,\dot{\xi},u_{\xi})\ ds
\end{align*}
where $u_{\xi}$ is determined by
\begin{align*}
	\begin{cases}
		\dot{u}_{\xi}(s)=L(s,\xi(s),\dot{\xi}(s),u_{\xi}(s)),\quad a.e.\ s\in[t',t],&\\
		u_{\xi}(t')=u(t',\xi(t')).&
	\end{cases}
\end{align*}
Taking the limit as $t'\to t$,
\begin{align*}
	D_t\varphi(t,x)+D_x\varphi(t,x)\cdot\dot{\xi}(t)-L(t,x,\dot{\xi}(t),u(t,x))\leqslant 0.
\end{align*}
Since $\xi$ is arbitrary, we conclude
\begin{align*}
	D_t\varphi(t,x)+H(t,x,D_x\varphi(t,x),u(t,x))\leqslant0.
\end{align*}
This implies $u$ is viscosity subsolution of \eqref{eq:HJe_t}.

On the other hand, since $\phi$ is lower semi-continuous and $(\kappa_1,\kappa_2)$-Lipschitz in the large, by Proposition \ref{inf_min}, there exists $y\in\R^n$ such that $u(t,x)=\phi(y)+h_L(0,t,y,x,\phi(y))$. Equivalently, there exists a $C^2$ curve $\xi:[0,t]\to\R^n$, $\xi(t)=x$, such that
\begin{align*}
	u(t,x)=\phi(\xi(0))+\int^t_0L(s,\xi(s),\dot{\xi}(s),u_{\xi}(s))\ ds.
\end{align*}
By the dynamic programming principle, we conclude that
\begin{align*}
	u(t,x)=\int^t_{t'}L(s,\xi(s),\dot{\xi}(s),u_{\xi}(s))\ ds+u(t',\xi(t')),\quad\forall t'\in[0,t].
\end{align*}
In a similar way, one can show that $u$ is viscosity supersolution of \eqref{eq:HJe_t}. This completes the proof.
\end{proof}

\appendix

\section{Some facts from analysis and differential equations}

\subsection{Carath\'eodory equations}

Let $\Omega\subset\R^{n+1}$ be an open set. A function $f:\Omega\subset\R\times\R^n\to\R^n$ is said to satisfy {\em Carath\'eodory condition} if
\begin{itemize}[-]
  \item for any $x\in\R^n$, $f(\cdot,x)$ is measurable;
  \item for any $t\in\R$, $f(t,\cdot)$ is continuous;
  \item for each compact subset $U$ of $\Omega$, there is an integrable function $m_U(t)$ such that
  $$
  |f(t,x)|\leqslant m_U(t),\quad (t,x)\in U.
  $$
\end{itemize}
A classical problem is to find an absolutely continuous function $x$ defined on a real interval $I$ such that $(t,x(t))\in\Omega$ for $t\in I$ and satisfies the following Carath\'eodory equation
\begin{equation}\label{eq:caratheodory_system}
	\dot{x}(t)=f(t,x(t)),\quad a.e., t\in I.
\end{equation}

\begin{Pro}[Carath\'eodory]\label{caratheodory}
	If $\Omega$ is an open set in $\R^{n+1}$ and $f$ satisfies the Carath\'eodory conditions on $\Omega$, then, for any $(t_0, x_0)$ in $\Omega$, there is a solution of \eqref{eq:caratheodory_system} through $(t_0, x_0)$. Moreover, if the function $f(t,x)$ is also locally Lipschitzian in $x$ with a measurable Lipschitz function, then the solution is unique.
\end{Pro}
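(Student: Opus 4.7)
The plan is to prove existence by a Tonelli-style delayed-iteration scheme and uniqueness by a Grönwall estimate. First I would localize: since $\Omega$ is open there exist $a,b>0$ such that the closed cylinder $R=[t_0,t_0+a]\times\overline{B(x_0,b)}$ lies in $\Omega$; Carathéodory's third condition then gives $m\in L^1([t_0,t_0+a])$ with $|f(t,x)|\leqslant m(t)$ on $R$. Choose $\alpha\in(0,a]$ small enough that $M(\alpha):=\int_{t_0}^{t_0+\alpha}m(s)\,ds\leqslant b$. It is enough to produce a forward solution on $[t_0,t_0+\alpha]$; the backward half is symmetric.

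For existence, define $x_n:[t_0,t_0+\alpha]\to\R^n$ by induction on intervals of length $\alpha/n$:
\begin{equation*}
x_n(t)=x_0\quad\text{on }[t_0,t_0+\alpha/n],\qquad x_n(t)=x_0+\int_{t_0}^{t-\alpha/n}f(s,x_n(s))\,ds\quad\text{on }[t_0+\alpha/n,t_0+\alpha].
\end{equation*}
Because the integrand on $[t_0,t]$ depends only on $x_n$ on the earlier interval $[t_0,t-\alpha/n]$, this is a well-defined inductive construction, and $|x_n(t)-x_0|\leqslant M(\alpha)\leqslant b$ keeps $x_n$ in the cylinder. The same estimate shows the family $\{x_n\}$ is equicontinuous (indeed $|x_n(t)-x_n(s)|\leqslant\int_s^t m(\tau)\,d\tau$), and clearly uniformly bounded. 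Ascoli--Arzelà extracts a subsequence $x_{n_k}\to x$ uniformly.

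The crux is passing to the limit in the integral equation. Using continuity of $f(s,\cdot)$, $f(s,x_{n_k}(s))\to f(s,x(s))$ for a.e. $s$, while $|f(s,x_{n_k}(s))|\leqslant m(s)$; measurability of $s\mapsto f(s,x_{n_k}(s))$ follows from the Carathéodory conditions (approximate $x_{n_k}$ by simple functions and use the separate measurability in $t$ and continuity in $x$, e.g.\ via Scorza Dragoni). Dominated convergence then yields
\begin{equation*}
x(t)=x_0+\int_{t_0}^t f(s,x(s))\,ds,\qquad t\in[t_0,t_0+\alpha],
\end{equation*}
so $x$ is absolutely continuous and satisfies \eqref{eq:caratheodory_system} almost everywhere. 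I expect this measurability/dominated-convergence step to be the main technical obstacle, since the Carathéodory structure does not give joint continuity; the Scorza Dragoni-type argument is what makes the composition manageable.

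For uniqueness, suppose $x,y$ are two solutions through $(t_0,x_0)$ defined on a common interval $I$ and staying in a compact set $K\subset\Omega$ on which the local Lipschitz hypothesis gives a measurable $\ell\in L^1_{\mathrm{loc}}$ with $|f(t,u)-f(t,v)|\leqslant\ell(t)|u-v|$ for $u,v$ in the appropriate slice. Writing both $x$ and $y$ as integral equations and subtracting,
\begin{equation*}
|x(t)-y(t)|\leqslant\left|\int_{t_0}^t\ell(s)|x(s)-y(s)|\,ds\right|,
\end{equation*}
so the integral form of Grönwall's inequality with $L^1$ coefficient forces $x\equiv y$ on $I$.
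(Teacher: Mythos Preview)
Your proof is correct and follows the classical Tonelli approximation scheme for Carath\'eodory existence, together with the standard Gr\"onwall argument for uniqueness. The paper itself does not give a proof of this proposition: it simply refers the reader to the textbooks of Coddington--Levinson and Filippov, and the argument you outline is precisely the one found there (in particular, Coddington--Levinson, Chapter~2, uses exactly this delayed-integral construction). One minor remark: for the measurability of $s\mapsto f(s,x_n(s))$ you do not really need Scorza Dragoni; since each $x_n$ is continuous, it is a uniform limit of step functions, and the Carath\'eodory conditions (measurability in $t$, continuity in $x$) already make the composition with a step function measurable, hence the limit is measurable as well.
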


\noindent For the proof of Proposition \ref{caratheodory} and more results related to Carath\'eodory equation \eqref{eq:caratheodory_system}, the readers can refer to \cite{Coddington_Levinson_book,Filippov_book}.

\subsection{Convexity}

The following facts on the convexity is essentially known (see \cite{CCWY2018}) when the Lagrangian is independent of $t$.

\begin{Lem}\label{convexity}
Let $L$ satisfy conditions \mbox{\rm (L1)-(L3)} and $s\in[a,b]$. We conclude that
\begin{enumerate}[\rm (a)]
  \item The function
  \begin{equation}\label{eq:f_epsilon}
	f(\varepsilon):=L_v(s,x,v/(1+\varepsilon),r)\cdot v/(1+\varepsilon)-L(s,x,v/(1+\varepsilon),r)
  \end{equation}
  is decreasing for $\varepsilon>-1$. In particular,
  \begin{align*}
  	f(\varepsilon)\geqslant f(+\infty)=-L(s,x,0,r)\geqslant-\overline{\theta}_0(0)-K|r|.
  \end{align*}
  \item If $\varepsilon_1,\varepsilon_2>-1$ and $\varepsilon_1<\varepsilon_2$, then we have
  \begin{align*}
  	&\,L(s,x,r,v/(1+\varepsilon_2))\\
  	\leqslant&\,(\kappa+1)^{-1}L(s,x,r,v/(1+\varepsilon_1))+\kappa\cdot(\kappa+1)^{-1}(\overline{\theta}_0(0)+K|r|)
  \end{align*}
  and
  \begin{align*}
  	f(\varepsilon_2)\leqslant\kappa^{-1}L(s,x,r,v/(1+\varepsilon_1))-(\kappa^{-1}+1)L(s,x,r,v/(1+\varepsilon_2))
  \end{align*}
  where $\kappa=(\varepsilon_2-\varepsilon_1)/(1+\varepsilon_1)>0$.
\end{enumerate}
\end{Lem}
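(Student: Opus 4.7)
The plan is to reduce both parts to two standard consequences of the convexity of $L$ in $v$: the supporting hyperplane (subgradient) inequality and a Jensen-type convex combination inequality. No second-order information on $L$ will be used, since we only have $L\in C^1$.

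For part (a), I would fix $(s,x,v,r)$ and reparametrize by $t:=1/(1+\varepsilon)\in(0,\infty)$, so $w=v/(1+\varepsilon)=tv$. Set $\phi(t):=L(s,x,tv,r)$. By (L1), $\phi$ is convex and $C^1$ on $(0,\infty)$, with $\phi'(t)=L_v(s,x,tv,r)\cdot v$, so a direct computation rewrites $f(\varepsilon)=t\phi'(t)-\phi(t)=:h(t)$. The statement ``$f$ is decreasing in $\varepsilon>-1$'' is equivalent to ``$h$ is nondecreasing in $t>0$''. To prove the latter, take $0<t_1<t_2$ and apply the subgradient inequality of convex $\phi$ at $t_2$, namely $\phi(t_1)\geqslant \phi(t_2)+\phi'(t_2)(t_1-t_2)$. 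A short rearrangement gives
\begin{align*}
h(t_2)-h(t_1)\geqslant t_1\bigl(\phi'(t_2)-\phi'(t_1)\bigr)\geqslant 0,
\end{align*}
where the last inequality uses that $\phi'$ is nondecreasing (convex plus $C^1$). Passing to the limit $t\to 0^+$ and using continuity of $L_v$ gives $f(+\infty)=-L(s,x,0,r)$. Finally I bound $L(s,x,0,r)\leqslant L(s,x,0,0)+K|r|\leqslant\overline{\theta}_0(0)+K|r|$ by combining (L3) and (L2) (with $c_1=0$ by Remark \ref{rem:constants}).

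For part (b), I would first do the algebraic identification $w_i:=v/(1+\varepsilon_i)$, $i=1,2$, and check from the definition of $\kappa$ that
\begin{align*}
w_2=\frac{1}{1+\kappa}\,w_1,\qquad w_1-w_2=\kappa\,w_2.
\end{align*}
In particular $w_2$ is the convex combination $\tfrac{1}{1+\kappa}w_1+\tfrac{\kappa}{1+\kappa}\cdot 0$. Convexity in $v$ from (L1) then gives
\begin{align*}
L(s,x,w_2,r)\leqslant\tfrac{1}{1+\kappa}L(s,x,w_1,r)+\tfrac{\kappa}{1+\kappa}L(s,x,0,r),
\end{align*}
and substituting the bound $L(s,x,0,r)\leqslant\overline{\theta}_0(0)+K|r|$ from part (a) produces the first inequality of (b). For the second inequality, I would apply the supporting hyperplane inequality at $w_2$,
\begin{align*}
L(s,x,w_1,r)\geqslant L(s,x,w_2,r)+L_v(s,x,w_2,r)\cdot(w_1-w_2),
\end{align*}
and substitute $w_1-w_2=\kappa w_2$. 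Dividing by $\kappa>0$ and rearranging, together with the definition $f(\varepsilon_2)=L_v(s,x,w_2,r)\cdot w_2-L(s,x,w_2,r)$, yields exactly $f(\varepsilon_2)\leqslant\kappa^{-1}L(s,x,w_1,r)-(\kappa^{-1}+1)L(s,x,w_2,r)$.

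There is no serious obstacle: the whole lemma is a careful packaging of two forms of the $C^1$-convexity inequality applied along the ray $\{tv:t>0\}$ in the velocity variable. The only points of vigilance are (i) not to invoke $L_{vv}$, since $L$ is only $C^1$, and (ii) to keep the argument order $(s,x,\cdot,r)$ straight in view of the mild notational typos in the statement of (b), where the displayed formulas should be read as $L(s,x,v/(1+\varepsilon_i),r)$ throughout.
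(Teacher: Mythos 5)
Your proof is correct and complete. Note that the paper itself offers no proof of this lemma --- it is stated as ``essentially known'' with a pointer to \cite{CCWY2018} for the autonomous case --- and your argument (rewriting $f(\varepsilon)$ as $h(t)=t\phi'(t)-\phi(t)$ with $t=1/(1+\varepsilon)$ and proving monotonicity of $h$ from the subgradient inequality, then using the convex-combination and supporting-hyperplane inequalities along the ray $\{tv: t>0\}$ for part (b)) is exactly the standard route; you also correctly identify that the displayed formulas in (b) should read $L(s,x,v/(1+\varepsilon_i),r)$ and that ``decreasing'' must be read as nonincreasing (with equality when $v=0$).
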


\subsection{Uniformly continuous functions}

\begin{Pro}\label{UC_Lip}
	Let $f$ be uniformly continuous function on $\R^n$, then for any $\varepsilon>0$ there exists  $K>0$ such that
	$$
	    |f(x)-f(y)|\leqslant K|x-y|+\varepsilon,\quad \forall x,y\in \R^n.
	$$
\end{Pro}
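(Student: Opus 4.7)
The plan is to extract a uniform modulus of continuity and then chain it along straight segments. Fix $\varepsilon>0$. By the uniform continuity of $f$, there exists $\delta=\delta(\varepsilon)>0$ such that $|u-v|\leqslant\delta$ implies $|f(u)-f(v)|\leqslant\varepsilon$ for all $u,v\in\R^n$.

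Now for arbitrary $x,y\in\R^n$ with $x\neq y$, set $N=\lceil|x-y|/\delta\rceil\geqslant 1$ and subdivide the segment $[x,y]$ into $N$ equal parts, choosing points
\begin{align*}
x_i=x+\tfrac{i}{N}(y-x),\qquad i=0,1,\dots,N,
\end{align*}
so that $x_0=x$, $x_N=y$, and $|x_{i+1}-x_i|=|x-y|/N\leqslant\delta$. Consequently $|f(x_{i+1})-f(x_i)|\leqslant\varepsilon$ for every $i$, and the triangle inequality yields
\begin{align*}
|f(x)-f(y)|\leqslant\sum_{i=0}^{N-1}|f(x_{i+1})-f(x_i)|\leqslant N\varepsilon\leqslant\Big(\tfrac{|x-y|}{\delta}+1\Big)\varepsilon=\tfrac{\varepsilon}{\delta}\,|x-y|+\varepsilon.
\end{align*}
Setting $K=\varepsilon/\delta$ gives the desired inequality (the case $x=y$ being trivial).

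The argument has no real obstacle; the only thing to keep an eye on is ensuring that the chain of intermediate points stays controlled by the modulus $\delta$, which is achieved by choosing $N$ as the ceiling of $|x-y|/\delta$ rather than some smaller integer. Note that the constant $K$ produced in this way depends on $\varepsilon$ (through $\delta(\varepsilon)$) and generally blows up as $\varepsilon\to 0^+$, which is consistent with the fact that uniformly continuous functions need not be Lipschitz.
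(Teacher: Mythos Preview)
Your proof is correct and follows essentially the same approach as the paper's: both pick $\delta$ from uniform continuity, subdivide the segment $[x,y]$ into pieces of length at most $\delta$, and telescope via the triangle inequality to obtain $K=\varepsilon/\delta$. Your bookkeeping with $N=\lceil |x-y|/\delta\rceil$ and equal subintervals is in fact slightly cleaner than the paper's version.
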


\begin{proof}
	Suppose that $f$ is uniformly continuous on $\R^n$ and fix $\varepsilon>0$. Then there exists $\delta>0$ such that $|f(z)-f(z')|\leqslant\varepsilon$ whenever $|z-z'|\leqslant\delta$. For any $x,y\in\R^n$, let $\gamma:[0,1]\to\R^n$ be the straight line segment connecting $x$ to $y$, and let $\tau>0$ such that $|x-\gamma(\tau)|=\delta$. Define $z_k=\gamma(k\tau)$, $k=0,\ldots,N$, where $N=[\frac 1{\tau}]$, the integer part of $\frac 1{\tau}$. Then it is clear that
	$$
	|z_k-z_{k+1}|=\delta, \ k=0,\ldots, N-1,\quad\text{and}\quad |z_N-y|\leqslant\delta
	$$
	and
	$$
	|x-y|=\sum^{N-1}_{k=0}|z_k-z_{k+1}|+|z_N-y|\geqslant (N-1)\delta.
	$$
	Therefore,
	\begin{align*}
	|f(x)-f(y)|\leqslant\sum^{N-1}_{k=0}|f(z_{k+1})-f(z_k)|+|f(z_N)-f(y)|\leqslant N\varepsilon\leqslant\frac{\varepsilon}{\delta}|x-y|+\varepsilon.
	\end{align*}
	Picking $K=\frac{\varepsilon}{\delta}$, we complete the proof.
\end{proof}

\subsection{A priori estimates and existence of minimizers}
In this section, fixing real numbers $a<b$, $u\in\R$, $R>0$ and two points $x,y\in\R^n$ such that $|x-y|\leq R$. For convenience, we collect some a priori estimate on the minimizer $\xi$ for \eqref{eq:app_fundamental_solution} and related solution $u_{\xi}$ of \eqref{eq:app_caratheodory_L}. The details of the estimates can be found in Appendix \ref{sec:existence}.

We suppose $\xi$ is a minimizer for \eqref{eq:app_fundamental_solution} and $u_{\xi}$ is determined by \eqref{eq:app_caratheodory_L}.

\begin{Pro}\label{a_priori}
There exists a continuous function $F:[0,+\infty)\times[0,+\infty)\to[0,+\infty)$ depending on $R$ and $u$ continuously, with $F(r_1,\cdot)$ being nondecreasing and superlinear and $F(\cdot,r_2)$ being nondecreasing, such that
\begin{gather*}
	|u_{\xi}(s)|\leqslant F(b-a,R/(b-a)),\quad s\in[a,b],\\
	\int^b_a|L(s,\xi(s),\dot{\xi}(s),u_{\xi}(s))|\ ds\leqslant F(b-a,R/(b-a)),\\
	\operatorname*{ess\ inf}_{s\in[a,b]}|\dot{\xi}(s)|\leqslant F(b-a,R/(b-a)),\\
	\sup_{s\in[a,b]}|\xi(s)-x|\leqslant (b-a)F(b-a,R/(b-a)).
\end{gather*}
\end{Pro}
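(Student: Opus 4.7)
The plan is to run a comparison argument with an explicit test curve, then cascade the resulting bound through the Carath\'eodory equation. Fix $a<b$, $R>0$, $u\in\R$ and $|x-y|\leqslant R$. I introduce the affine competitor $\eta(s)=x+\frac{s-a}{b-a}(y-x)$, so that $|\dot\eta(s)|\leqslant R/(b-a)$. Let $u_\eta$ solve the Carath\'eodory equation \eqref{eq:app_caratheodory_L} with $\eta$ in place of $\xi$; rewriting the ODE as
\begin{equation*}
\dot u_\eta(s)=L(s,\eta(s),\dot\eta(s),0)+\widehat{L_u^\eta}(s)\,u_\eta(s),\qquad \widehat{L_u^\eta}(s):=\int_0^1 L_u(s,\eta,\dot\eta,\lambda u_\eta)\,d\lambda,
\end{equation*}
I use (L2) to bound the non-homogeneous term by $\overline\theta_0(R/(b-a))+c_1$ and (L3) to bound $|\widehat{L_u^\eta}|\leqslant K$. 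Solving this linear-in-$u_\eta$ ODE and applying Gronwall gives an explicit bound $|u_\eta(s)|\leqslant F_1(b-a,R/(b-a))$ depending continuously on $u$, and therefore
\begin{equation*}
J(\xi)\leqslant J(\eta)=u_\eta(b)-u\leqslant M_1(b-a,R/(b-a)).
\end{equation*}

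Next I transfer this bound to the minimizer itself. The same linear-ODE representation
\begin{equation*}
u_\xi(s)=e^{I(s)}u+\int_a^s e^{I(s)-I(\tau)}L(\tau,\xi,\dot\xi,0)\,d\tau,\qquad I(s)=\int_a^s\widehat{L_u^\xi}\,d\tau,
\end{equation*}
together with $|I(s)-I(\tau)|\leqslant K(b-a)$, reduces the bound on $|u_\xi|$ to an estimate on $\int_a^b|L(\tau,\xi,\dot\xi,0)|\,d\tau$. Using the one-sided bound $L(\tau,\xi,\dot\xi,0)\geqslant-c_0$ from (L2) and the identity $L(\tau,\xi,\dot\xi,0)=L(\tau,\xi,\dot\xi,u_\xi)-\widehat{L_u^\xi}u_\xi$, the integrand is controlled by
\begin{equation*}
|L(\tau,\xi,\dot\xi,0)|\leqslant L(\tau,\xi,\dot\xi,u_\xi)+2c_0+3K|u_\xi|.
\end{equation*}
Combined with $\int_a^b L(\tau,\xi,\dot\xi,u_\xi)d\tau=J(\xi)\leqslant M_1$, this yields an estimate of the form $\|u_\xi\|_\infty\leqslant A+B\int_a^b|u_\xi|$ with $A,B$ depending on $(b-a,R/(b-a),u)$, from which Gronwall's lemma (applied to the forward or backward integrated inequality $\dot u_\xi\geqslant -c_0-K|u_\xi|$) produces the desired sup bound, and hence the sought bound on $\int_a^b|L|\,ds$.

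The remaining two estimates are obtained from the superlinear side of (L2). Since $L(\tau,\xi,\dot\xi,0)\geqslant\theta_0(|\dot\xi|)-c_0$, the previous step gives $\int_a^b\theta_0(|\dot\xi(\tau)|)\,d\tau\leqslant M_2$ with $M_2$ depending on $(b-a,R/(b-a),u)$. By superlinearity, the Fenchel conjugate $\theta_0^*$ is everywhere finite, so for any $p>0$
\begin{equation*}
|\dot\xi(\tau)|\leqslant p^{-1}(\theta_0(|\dot\xi(\tau)|)+\theta_0^*(p));
\end{equation*}
taking $p=1$ and integrating gives $\int_a^b|\dot\xi|\,d\tau\leqslant M_2+(b-a)\theta_0^*(1)$, which yields the bound on $\sup_{s\in[a,b]}|\xi(s)-x|\leqslant\int_a^b|\dot\xi|$ of the required form $(b-a)F(b-a,R/(b-a))$ after factoring out $(b-a)$. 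For the essential infimum, since the superlinear envelope of $\theta_0$ is increasing past some threshold, one has $\theta_0(\operatorname*{ess\,inf}|\dot\xi|)\cdot(b-a)\leqslant\int_a^b\theta_0(|\dot\xi|)\leqslant M_2$ and inverting via superlinearity produces a bound on $\operatorname*{ess\,inf}|\dot\xi|$ in terms of $M_2/(b-a)$.

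The main technical hurdle is the \emph{coupling} between $u_\xi$ and the integrand in the Carath\'eodory ODE: bounds on $|u_\xi|$ require integral bounds on $L$, which in turn require bounds on $|u_\xi|$. Breaking this circular dependency is why I pass through the one-sided inequality $\dot u_\xi\geqslant -c_0-K|u_\xi|$ (which follows from $\theta_0\geqslant 0$ and (L3)) before invoking Gronwall: this one-sided control, together with the \emph{terminal} upper bound $u_\xi(b)\leqslant u+M_1$ inherited from minimality, brackets $u_\xi$ from both sides without ever needing an a priori upper bound on $L$ pointwise. Once the $L^\infty$ bound on $u_\xi$ is in hand, the remaining estimates are straightforward. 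All constants are continuous, nondecreasing in both arguments, and the superlinearity in $R/(b-a)$ is inherited from $\overline\theta_0$ through $M_1$.
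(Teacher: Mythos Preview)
Your approach is correct and essentially the same as the paper's: use the affine competitor to bound $J(\xi)$, then the explicit ODE representation together with the one-sided bound $L(s,\xi,\dot\xi,0)\geqslant-c_0$ to break the $u_\xi$--$L$ circularity, and finally (L2) superlinearity for the velocity estimates. The paper only simplifies your last step by observing $\operatorname*{ess\,inf}_{s}|\dot\xi(s)|\leqslant\frac{1}{b-a}\int_a^b|\dot\xi|\,ds$ directly from the $L^1$ bound you already derived, which bypasses any monotonicity issue for $\theta_0$.
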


\section{Existence result and a priori estimates}\label{sec:existence}

In this section, fixing real numbers $a<b$, $u\in\R$, $R>0$ and two points $x,y\in\R^n$ such that $|x-y|\leqslant R$, we shall give some a priori estimate for solutions of the Carath\'eodory equation \eqref{eq:app_caratheodory_L}. Then we show that the action functional $J(\xi)$ defined by \eqref{eq:app_fundamental_solution} attains its minimum on some element in $\Gamma^{a,b}_{x,y}$. For convenience, we set
\begin{align*}
	u_{\xi}(a)=u,\qquad t=b-a.
\end{align*}
Recalling Remark \ref{rem:constants} we can take nonnegative constants $c_0,c_1,K,C_1,C_2$ instead of functions $c_0(\cdot),c_1(\cdot),K(\cdot),C_1(\cdot),C_2(\cdot)$ in our assumptions. For $\varepsilon>0$,
\begin{equation}
\mathcal{A}_{\varepsilon}=\{\xi\in\mathcal{A}:\inf_{\eta\in\mathcal{A}}J(\eta)+\varepsilon \geq u_{\xi}(b)-u\}.
\end{equation}
We denote $L_0(s,x,v)=L(s,x,v,0)$ which is a Lagrangian satisfies the standard conditions in \cite{Fathi_Maderna2007}.

Let $\xi\in\mathcal{A}_{\varepsilon}$ and let $u_{\xi}$ be determined by
\begin{equation}\label{eq:app_Cara}
	\dot{u}_{\xi}(s)=L(s,\xi(s),\dot{\xi}(s),u_{\xi}(s)),\quad s\in[a,b],
\end{equation}
with $u_{\xi}(a)=u$. Solving \eqref{eq:app_Cara} we obtain
\begin{equation}\label{app:cara_2}
	u_{\xi}(s)-u=(e^{\int^s_a\widehat{L_u^{\xi}}\ dr}-1)u+\int^s_ae^{\int^s_\tau\widehat{L_u^{\xi}}\ dr}L_0(\tau,\xi(\tau),\dot{\xi}(\tau))\ d\tau,
\end{equation}
where
\begin{align*}
	\widehat{L_u^{\xi}}(s)=\int^1_0L_u(s,\xi(s),\dot{\xi}(s),\lambda u_{\xi}(s))\ d\lambda.
\end{align*}

\begin{Lem}\label{u_bound}
Let $\xi\in\mathcal{A}_{\varepsilon}$ with $u_{\xi}$ being determined by the associated Carath\'eodory equation \eqref{eq:app_caratheodory_L} and $\varepsilon>0$. Then there exist two continuous functions $F_1, F_2:[0,+\infty)\times[0,+\infty)\to[0,+\infty)$ depending on $u$, with $F_i(r_1,\cdot)$ being nondecreasing and superlinear and $F_i(\cdot,r_2)$ being nondecreasing for any $r_1,r_2\geqslant0$, $i=1,2$, such that
\begin{equation}\label{eq:bound_u}
	\begin{split}
		|u_{\xi}(s)-u|\leqslant&\,tF_1(t,R/t)+2e^{Kt}\varepsilon,\quad s\in[a,b],\\
	\int^b_a|L(\tau,\xi(\tau),\dot{\xi}(\tau),u_{\xi}(\tau))|d\tau\leqslant&\,F_2(t,R/t)+2e^{Kt}(1+Kt)\varepsilon.
	\end{split}
\end{equation}
Moreover, one can take
\begin{align*}
	F_1(r_1,r_2)=&\,3c_{r_1}e^{Kr_1}|u|+2e^{2Kr_1}(\overline{\theta}_0(r_2)+c_0),\\
	F_2(r_1,r_2)=&\,2c_0+(1+Kr_1)F_1(r_1,r_2).
\end{align*}
where $C_t=\sup_{s\in(0,t]}\frac{e^{Ks}-1}{s}<\infty$.
\end{Lem}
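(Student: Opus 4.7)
The plan is to exploit the explicit representation (B.3) together with (L2)--(L3), in three steps: compare $u_\xi(b)$ to a test curve to bound the endpoint value, propagate this bound to all of $[a,b]$ via an algebraic identity inside (B.3), and extract $\int_a^b|L|\,d\tau$ from $\int_a^b L\,d\tau=u_\xi(b)-u$ via the sign-split $|L|=L+2L^-$.

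First I would fix the straight-line test curve $\eta(s)=x+(s-a)(y-x)/t$, so that $|\dot\eta|\leq R/t$, and let $u_\eta$ be the unique absolutely continuous solution of the Carath\'eodory equation associated to $\eta$ (existence via Proposition A.1, uniqueness from the $K$-Lipschitz continuity in $r$ given by (L3)). Using (L2) with $c_1=0$ (Remark 1.1) to get $-c_0\leq L_0(\tau,\eta,\dot\eta)\leq\overline\theta_0(R/t)$, and (L3) to bound the exponential weights $e^{\int\widehat{L_u^\eta}\,dr}$ in (B.3) by $e^{Kt}$, a direct estimate yields $u_\eta(b)-u\leq(e^{Kt}-1)|u|+te^{Kt}(\overline\theta_0(R/t)+c_0)$. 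Because $\xi\in\mathcal{A}_\varepsilon$, the same bound extends to $u_\xi(b)-u$ up to an additional $\varepsilon$, while a symmetric use of (B.3) for $\xi$ with $L_0(\tau,\xi,\dot\xi)\geq-c_0$ gives the matching lower estimate $u_\xi(b)-u\geq-(e^{Kt}-1)|u|-e^{Kt}c_0 t$. Combined with $(e^{Kt}-1)\leq C_t\, t$, this produces
\begin{equation*}
    |u_\xi(b)-u|\;\leq\;C_t\, t\,|u|+t e^{Kt}\bigl(\overline\theta_0(R/t)+c_0\bigr)+\varepsilon.
\end{equation*}

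Next I would promote this endpoint estimate to a uniform one on $[a,b]$. Set $P(\tau)=\exp(\int_a^\tau\widehat{L_u^\xi}\,dr)$ and $I(\tau)=\int_a^\tau e^{\int_\sigma^\tau\widehat{L_u^\xi}\,dr}L_0\,d\sigma$, so (B.3) reads $u_\xi(\tau)-u=(P(\tau)-1)u+I(\tau)$. The algebraic identity
\begin{equation*}
    I(s)\;=\;\frac{P(s)}{P(b)}\,I(b)\;-\;P(s)\int_s^b P(\tau)^{-1}L_0(\tau,\xi,\dot\xi)\,d\tau
\end{equation*}
reduces $I(s)$ to $I(b)$ plus a tail which, thanks to $L_0\geq-c_0$ and the bounds $P(s),\,P(\tau)^{-1}\leq e^{Kt}$, is majorized by $c_0 t e^{2Kt}$. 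Since $|I(b)|\leq|u_\xi(b)-u|+(e^{Kt}-1)|u|$, the first step yields $|I(s)|\leq e^{Kt}|I(b)|+c_0 t e^{2Kt}$ and hence $|u_\xi(s)-u|\leq tF_1(t,R/t)+2e^{Kt}\varepsilon$ after collecting the exponential factors into $F_1=3C_t e^{Kt}|u|+2e^{2Kt}(\overline\theta_0(R/t)+c_0)$.

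Finally I would bound $\int_a^b|L|\,d\tau$ using $L\geq L_0-K|u_\xi|\geq-c_0-K|u_\xi|$ from (L2)--(L3), so $L^-\leq c_0+K|u_\xi|$ pointwise. Writing $|L|=L+2L^-$ and integrating,
\begin{equation*}
    \int_a^b|L|\,d\tau\;\leq\;(u_\xi(b)-u)+2c_0 t+2Kt\sup_{s\in[a,b]}|u_\xi(s)|.
\end{equation*}
Inserting the pointwise bound $\sup_s|u_\xi(s)|\leq|u|+tF_1(t,R/t)+2e^{Kt}\varepsilon$ from the previous step and rearranging produces the announced $F_2=2c_0+(1+Kt)F_1$ with $\varepsilon$-coefficient $2e^{Kt}(1+Kt)$. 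The main obstacle, in my view, is purely bookkeeping: tracking the $e^{Kt}$, $e^{2Kt}$, and $C_t$ factors so that the collected constants match the precise formulas for $F_1$ and $F_2$ announced in the lemma; no new analytic idea is needed beyond (B.3), Proposition A.1, the uniform bound $|L_u|\leq K$ from (L3), and the one-sided bound $L_0\geq-c_0$ from (L2).
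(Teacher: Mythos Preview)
Your approach is essentially the paper's: test against the straight segment to bound $u_\xi(b)-u$, use $L_0\geq -c_0$ in (B.3) for the lower estimate, re-solve the Carath\'eodory equation from $s$ to $b$ to propagate the endpoint bound, and split $|L|=L+2L^-$ for the integral. One small slip: the claim $|I(s)|\leq e^{Kt}|I(b)|+c_0te^{2Kt}$ does not follow from $L_0\geq -c_0$ alone, since the tail $-P(s)\int_s^bP(\tau)^{-1}L_0\,d\tau$ is only bounded \emph{above} by $c_0te^{2Kt}$ (no upper bound on $L_0$ is available); for the lower side you must instead invoke the direct estimate $u_\xi(s)-u\geq -(e^{Kt}-1)|u|-c_0te^{Kt}$, which holds for \emph{all} $s\in[a,b]$ by the same use of (B.3) you carried out only at $s=b$. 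The paper makes this explicit as its first step; with that adjustment your argument goes through, and as you say the remainder is constant bookkeeping.
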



\begin{proof}
Let $\xi\in\mathcal{A}$. By \eqref{app:cara_2} and condition (L2) and (L3), we obtain that for all $s\in[a,b]$
\begin{align*}
	u_{\xi}(s)-u\geqslant&\,-(e^{Kt}-1)|u|+\int^s_ae^{\int^s_\tau\widehat{L_u^{\xi}}\ dr}(\theta_0(|\dot{\xi}|)-c_0)\ d\tau\\
	\geqslant&\,-(e^{Kt}-1)|u|-c_0\int^s_ae^{\int^s_\tau\widehat{L_u^{\xi}}\ dr}\ d\tau\\
	\geqslant&\,-(e^{Kt}-1)|u|-c_0te^{Kt}.
\end{align*}
This gives the lower bound of $u_{\xi}$.

Now we turn to the proof of \eqref{eq:bound_u}. Set $\xi_0(s)=x+s(y-x)/t$ for any $s\in[a,b]$. Then $\xi_0\in\mathcal{A}$. By solving the associated Carath\'eodory equation again, we have that
\begin{align*}
	u_{\xi_0}(b)-u=&\,(e^{\int^s_a\widehat{L_u^{\xi_0}}\ dr}-1)u+\int^s_ae^{\int^s_\tau\widehat{L_u^{\xi_0}}\ dr}L_0(\tau,\xi_0(\tau),\dot{\xi}_0(\tau))\ d\tau\\
	\leqslant&\,(e^{Kt}-1)|u|+te^{Kt}\overline{\theta}_0(R/t).
\end{align*}
Now, suppose $\xi\in\mathcal{A}_{\varepsilon}$. Then $u_{\xi}(b)\leqslant u_{\xi_0}(b)+\varepsilon$ and this lead to
\begin{equation}
	u_{\xi}(b)-u\leqslant(e^{Kt}-1)|u|+te^{Kt}\overline{\theta}_0(R/t)+\varepsilon.
\end{equation}
Combining the lower bound of $u_{\xi}$ above we obtain
\begin{equation}\label{eq:bound_u(b)}
	|u_{\xi}(b)-u|\leqslant(e^{Kt}-1)|u|+te^{Kt}(\overline{\theta}_0(R/t)+c_0)+\varepsilon.
\end{equation}
By \eqref{app:cara_2} at $s=b$ we obtain
\begin{align*}
	u_{\xi}(b)-u=(e^{\int^b_a\widehat{L_u^{\xi}}\ dr}-1)u+\int^b_ae^{\int^b_\tau\widehat{L_u^{\xi}}\ dr}L_0(\tau,\xi(\tau),\dot{\xi}(\tau))\ d\tau.
\end{align*}
In view of \eqref{eq:bound_u(b)} we have
\begin{equation}\label{eq:bound_integral1}
	\begin{split}
		&\,\int^b_ae^{\int^b_\tau\widehat{L_u^{\xi}}\ dr}|L_0(\tau,\xi(\tau),\dot{\xi}(\tau))|\ d\tau\leqslant|u_{\xi}(b)-u|+(e^{\int^b_a\widehat{L_u^{\xi}}\ dr}-1)|u|\\
	\leqslant&\,2(e^{Kt}-1)|u|+te^{Kt}(\overline{\theta}_0(R/t)+c_0)+\varepsilon.
	\end{split}
\end{equation}
By solving \eqref{eq:app_caratheodory_L} again we have that for all $s\in[a,b]$
\begin{align*}
	u_{\xi}(b)-u=(e^{\int^b_s\widehat{L_u^{\xi}}\ dr}-1)u_{\xi}(s)+\int^b_se^{\int^b_\tau\widehat{L_u^{\xi}}\ dr}L_0(\tau,\xi(\tau),\dot{\xi}(\tau))\ d\tau.
\end{align*}
Therefore, by \eqref{eq:bound_u(b)} and \eqref{eq:bound_integral1} we conclude that for all $s\in[a,b]$
\begin{align*}
	|u_{\xi}(s)-u_{\xi}(b)|\leqslant&\,(e^{-\int^b_s\widehat{L_u^{\xi}}\ dr}-1)|u_{\xi}(b)|+e^{-\int^b_s\widehat{L_u^{\xi}}\ dr}\cdot\int^b_se^{\int^b_\tau\widehat{L_u^{\xi}}\ dr}|L_0(\tau,\xi(\tau),\dot{\xi}(\tau))|\ d\tau\\
	\leqslant&\,(e^{Kt}-1)(e^{Kt}|u|+te^{Kt}(\overline{\theta}_0(R/t)+c_0)+\varepsilon)\\
	&\,\quad+e^{Kt}\cdot(2(e^{Kt}-1)|u|+te^{Kt}(\overline{\theta}_0(R/t)+c_0)+\varepsilon)\\
	=&\,3(e^{Kt}-1)e^{Kt}|u|+2te^{2Kt}(\overline{\theta}_0(R/t)+c_0)+2e^{Kt}\varepsilon
\end{align*}
which completes the proof of \eqref{eq:bound_u}.

In view of \eqref{eq:bound_u} and condition (L2) and (L3) we have that
\begin{align*}
	&\,\int^b_a|L_0(s,\xi,\dot{\xi})|ds\leqslant\int^b_a(L_0(s,\xi,\dot{\xi})+2c_0)\ ds\\
	\leqslant&\,2c_0t+u_{\xi}(b)-u+K\int^b_a|u_{\xi}|\ ds\\
	\leqslant&\,2c_0t+F_1(t,R/t)+2e^{Kt}\varepsilon+Kt(F_1(t,R/t)+2e^{Kt}\varepsilon)\\
	=&\,2c_0t+(1+Kt)F_1(t,R/t)+2e^{Kt}(1+Kt)\varepsilon.
\end{align*}
This completes the proof of the second inequality in \eqref{eq:bound_u}.
\end{proof}

\begin{Lem}\label{equi_integrable}
Let $\xi\in\mathcal{A}_{\varepsilon}$ and $\varepsilon>0$. Then there exists a continuous function $F:[0,+\infty)\times[0,+\infty)\to[0,+\infty)$ depending on $u$, with $F(r_1,\cdot)$ being nondecreasing and superlinear and $F(\cdot,r_2)$ being nondecreasing for any $r_1,r_2\geqslant0$, such that
\begin{align*}
\int^t_0|\dot{\xi}(s)|\ ds\leqslant tF(t.R/t)+2e^{Kt}(1+tK)\varepsilon.
\end{align*}
Moreover, the family $\{\dot{\xi}\}_{\xi\in\mathcal{A}_{\varepsilon}}$ is equi-integrable.
\end{Lem}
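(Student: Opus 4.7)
The plan is to deduce both the integral bound on $|\dot\xi|$ and the equi-integrability of $\{\dot\xi\}_{\xi\in\mathcal{A}_\varepsilon}$ from the superlinear lower bound on $L_0$ together with the $L^1$-estimate on $L_0(s,\xi,\dot\xi)$ already established in Lemma \ref{u_bound}. The key observation is that condition (L2) gives $\theta_0(|\dot\xi(s)|)\leqslant L_0(s,\xi(s),\dot\xi(s))+c_0$ for a.e.\ $s$, so integration yields
\begin{equation*}
	\int^b_a\theta_0(|\dot\xi(s)|)\,ds\leqslant \int^b_aL_0(s,\xi,\dot\xi)\,ds+c_0 t\leqslant F_2(t,R/t)+2e^{Kt}(1+Kt)\varepsilon+c_0 t,
\end{equation*}
using the second inequality of Lemma \ref{u_bound} (and $L_0\leqslant|L_0|$). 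This will serve as a uniform superlinear energy bound.

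From this, to obtain the linear bound I would apply Fenchel's inequality to $\theta_0$: for every $v$, $|v|\leqslant\theta_0(|v|)+\theta_0^*(1)$, where $\theta_0^*$ is the Legendre transform of $\theta_0$ (finite because $\theta_0$ is superlinear). Integrating this pointwise inequality and combining with the previous display yields
\begin{equation*}
	\int^b_a|\dot\xi(s)|\,ds\leqslant F_2(t,R/t)+c_0 t+\theta_0^*(1)t+2e^{Kt}(1+Kt)\varepsilon,
\end{equation*}
so the desired inequality holds with $F(r_1,r_2):=F_2(r_1,r_2)/r_1+c_0+\theta_0^*(1)$ (extended continuously at $r_1=0$), which is nondecreasing in $r_1$ and nondecreasing and superlinear in $r_2$ since $F_2$ inherits these properties from $F_1$ and $\overline{\theta}_0$.

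For equi-integrability, I would argue directly using the uniform bound on $\int^b_a\theta_0(|\dot\xi|)\,ds$. For any measurable $E\subset[a,b]$ and any $M>0$,
\begin{equation*}
	\int_E|\dot\xi|\,ds\leqslant M|E|+\int_{E\cap\{|\dot\xi|>M\}}|\dot\xi|\,ds\leqslant M|E|+\Bigl(\sup_{r>M}\frac{r}{\theta_0(r)}\Bigr)\int^b_a\theta_0(|\dot\xi|)\,ds.
\end{equation*}
The superlinearity of $\theta_0$ makes $\sup_{r>M}r/\theta_0(r)\to 0$ as $M\to\infty$, while the last integral is bounded uniformly in $\xi\in\mathcal{A}_\varepsilon$ by the first step. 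Given $\delta>0$, choose $M$ so large that the second term is $\leqslant\delta/2$, then require $|E|<\delta/(2M)$; this proves equi-integrability. The main bookkeeping obstacle—really the only nontrivial point—is checking that $F$ inherits the required monotonicity/superlinearity properties uniformly in $R/t$ once one divides $F_2(r_1,r_2)$ by $r_1$, but this follows from the explicit formula for $F_2$ given in Lemma \ref{u_bound}.
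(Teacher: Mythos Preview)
Your proof is correct and follows essentially the same approach as the paper: both use the superlinear lower bound $\theta_0(|\dot\xi|)\leqslant L_0+c_0$ from (L2), Fenchel's inequality $|v|\leqslant\theta_0(|v|)+\theta_0^*(1)$, and the estimates of Lemma~\ref{u_bound} to obtain the $L^1$-bound, and then the standard splitting $\int_E|\dot\xi|\leqslant M|E|+\frac{1}{\alpha}\int\theta_0(|\dot\xi|)$ for equi-integrability. The only cosmetic difference is that the paper starts from $u_\xi(b)-u=\int L\,ds$ and lower-bounds $L$ via (L3) and (L2), whereas you invoke the $L^1$-bound on $L$ from Lemma~\ref{u_bound} directly (note that its statement bounds $\int|L(\cdot,u_\xi)|$, so to reach $\int L_0$ you implicitly use $|L_0|\leqslant|L(\cdot,u_\xi)|+K|u_\xi|$ together with the first bound of that lemma).
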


\begin{proof}
By (L2) and (L3), we obtain
\begin{equation}\label{eq:lower1}
	\begin{split}
		&\,u_{\xi}(b)-u\\
		=&\,\int^b_aL(s,\xi(s),\dot{\xi}(s),u_{\xi}(s))\ ds\geqslant\int^b_a\{L_{0}(s,\xi(s),\dot{\xi}(s))-K|u_{\xi}(s)|\}\ ds\\
		\geqslant&\, \int^b_a\{\theta_0(|\dot{\xi}(s)|)-c_0-K|u_{\xi}(s)|\}\ ds\\
		\geqslant&\,\int^b_a\{|\dot{\xi}(s)|-K|u_{\xi}(s)|-(c_0+\theta^*_0(1))\}\ ds.
		\end{split}
\end{equation}
In view of Lemma \ref{u_bound} and \eqref{eq:lower1}, we obtain that
\begin{align*}
	\int^b_a|\dot{\xi}(s)|\ ds\leqslant&\,\int^b_aK|u_{\xi}(s)|\ ds+t(c_0+\theta^*_0(1))+u_{\xi}(b)-u\\
	\leqslant&\,tK(tF_1(t,R/t)+2e^{Kt}\varepsilon)+t(c_0+\theta^*_0(1))\\
	&\,+(tF_1(t,R/t)+2e^{Kt}\varepsilon)\\
    \leqslant&\,tF_2(t.R/t)+2e^{Kt}(1+tK)\varepsilon.
\end{align*}
	
Now we turn to proof of the equi-integrability of the family $\{\dot{\xi}\}_{\xi\in\mathcal{A}_{\varepsilon}}$. Since $\theta_0$ is a superlinear function, for any $\alpha>0$ there exists $C_{\alpha}>0$ such that $r\leqslant\theta_0(r)/\alpha$ for $r>C_{\alpha}$. Thus, for any measurable subset $E\subset[a,b]$, invoking (L2), (L3) and Lemma \ref{u_bound}, we have that
	\begin{align*}
		\int_{E\cap\{|\dot{\xi}|>C_{\alpha}\}}|\dot{\xi}|ds\leqslant&\,\frac 1{\alpha}\int_{E\cap\{|\dot{\xi}|>C_{\alpha}\}}\theta_0(|\dot{\xi}|)ds\leqslant\frac 1{\alpha}\int_{E\cap\{|\dot{\xi}|>C_{\alpha}\}}\{L_0(s,\xi,\dot{\xi})+c_0\}\ ds\\
		\leqslant&\,\frac 1{\alpha}\int_{E\cap\{|\dot{\xi}|>C_{\alpha}\}}\{L(s,\xi,\dot{\xi},u_{\xi})+K|u_{\xi}(s)|+c_0(s)\}\ ds\\
        \leqslant&\,\frac 1{\alpha}\int^{b}_{a}\{L(s,\xi,\dot{\xi},u_{\xi})+K|u_{\xi}(s)|+c_0(s)\}\ ds\\
        \leqslant&\,\frac 1{\alpha}\left\{(u_{\xi}(b)-u)+tK(tF_1(t,R/t)+2e^{Kt}\varepsilon+|u|)+c_0t\right\}\\
        \leqslant&\,\frac 1{\alpha}\left\{((tF_1(t,R/t)+2e^{Kt}\varepsilon))+tK(tF_1(t,R/t)+2e^{Kt}\varepsilon+|u|)+c_0t\right\}\\
		:=&\,\frac 1{\alpha}F_3(\varepsilon,t,R/t)
	\end{align*}
	Therefore, we conclude that
	\begin{align*}
		\int_E|\dot{\xi}|ds\leqslant\int_{E\cap\{|\dot{\xi}|>C_{\alpha}\}}|\dot{\xi}|ds+\int_{E\cap\{|\dot{\xi}|\leqslant C_{\alpha}\}}|\dot{\xi}|ds\leqslant\frac 1{\alpha}F_3(\varepsilon,t,R/t)+|E|C_{\alpha}.
	\end{align*}
	Then, the equi-integrability of the family $\{\dot{\xi}\}_{\xi\in\mathcal{A}_{\varepsilon}}$ follows since the right-hand side can be made arbitrarily small by choosing $\alpha$ large and $|E|$ small, and this proves our claim.
\end{proof}

\begin{Pro}\label{existence}
	The functional
	\begin{align*}
		\mathcal{A}\ni\xi\mapsto J(\xi)=\int^b_aL(s,\xi(s),\dot{\xi}(s),u_{\xi}(s))\ ds,
	\end{align*}
	where $u_{\xi}$ is determined by \eqref{eq:app_caratheodory_L}, admits a minimizer.
\end{Pro}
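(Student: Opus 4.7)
The plan is to apply the direct method of the calculus of variations, adapted to the coupling between $\xi$ and $u_\xi$ through the Carath\'eodory equation. Concretely, pick a sequence $\varepsilon_n\downarrow 0$ and minimizers $\xi_n\in\mathcal{A}_{\varepsilon_n}$, so that $J(\xi_n)\to\inf_{\mathcal{A}}J$. Lemma~\ref{equi_integrable} guarantees that $\{\dot\xi_n\}$ is equi-integrable in $L^1([a,b];\R^n)$, so by the Dunford--Pettis theorem we may pass to a subsequence with $\dot\xi_n\rightharpoonup v$ weakly in $L^1$. Setting $\xi(s)=x+\int_a^s v(r)\,dr$, equi-integrability also gives equi-continuity of the sequence $\xi_n$ and the uniform bound $\xi_n(a)=x$, so Arzel\`a--Ascoli yields $\xi_n\to\xi$ uniformly on $[a,b]$; hence $\xi(a)=x$, $\xi(b)=y$, and $\xi\in\Gamma^{a,b}_{x,y}$.

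Next, let $u_\xi$ be the unique absolutely continuous solution of \eqref{eq:app_caratheodory_L} associated with the limit curve $\xi$, so that $\xi\in\mathcal{A}$ and $J(\xi)=u_\xi(b)-u$; the task is to prove $J(\xi)\leqslant\liminf_n J(\xi_n)$. By Lemma~\ref{u_bound} the family $\{u_{\xi_n}\}$ is uniformly bounded in $L^\infty$, and since $\int_a^b|\dot u_{\xi_n}|\,ds=\int_a^b|L(s,\xi_n,\dot\xi_n,u_{\xi_n})|\,ds$ is also uniformly bounded, Helly's selection theorem allows a further extraction such that $u_{\xi_n}\to\tilde u$ pointwise a.e. on $[a,b]$ for some BV function $\tilde u$ with $\tilde u(a)=u$. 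Combining this pointwise convergence of the $r$-variable with $\xi_n\to\xi$ uniformly and $\dot\xi_n\rightharpoonup\dot\xi$ weakly in $L^1$, and using that $L$ is convex in $v$ and bounded below by $\theta_0(|v|)-c_0-K\|u_{\xi_n}\|_\infty$ (an integrable minorant thanks to (L2)--(L3) and Lemma~\ref{u_bound}), Ioffe's lower semi-continuity theorem yields, for every $s\in[a,b]$,
\begin{equation*}
\int_a^s L(r,\xi(r),\dot\xi(r),\tilde u(r))\,dr\;\leqslant\;\liminf_n\int_a^s L(r,\xi_n(r),\dot\xi_n(r),u_{\xi_n}(r))\,dr\;=\;\tilde u(s)-u.
\end{equation*}

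The final step is a comparison argument: $\tilde u$ is a supersolution of the Carath\'eodory ODE for $\xi$ in integrated form, while $u_\xi$ is its exact solution; using $|L_r|\leqslant K$ from (L3), the difference $w=\tilde u-u_\xi$ satisfies $w(a)=0$ together with $w(s)\geqslant\int_a^s\widehat{L_r}(r)w(r)\,dr+\phi(s)$, where $|\widehat{L_r}|\leqslant K$ and $\phi(s):=\tilde u(s)-u-\int_a^s L(r,\xi,\dot\xi,\tilde u)\,dr\geqslant 0$. A Gronwall-type argument (solving the Volterra inequality by introducing the integrating factor $\exp(-\int_a^s\widehat{L_r})$ and writing $w$ as a Stieltjes integral against $d\phi$) forces $w\geqslant 0$ pointwise. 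Evaluating at $s=b$ gives $u_\xi(b)\leqslant\tilde u(b)=\lim_n u_{\xi_n}(b)$, hence $J(\xi)\leqslant\inf J$, so $\xi$ is a minimizer.

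The main obstacle is clearly step~5: because $u_{\xi_n}$ is only BV with uniform total variation (the integrand $L(s,\xi_n,\dot\xi_n,u_{\xi_n})$ need not be equi-integrable since no superlinear upper bound on $L$ in $v$ is assumed), the pointwise limit $\tilde u$ may fail to be absolutely continuous, and one cannot simply differentiate the inequality obtained from Ioffe's theorem. What saves the day is that $(\mathrm{L}3)$ gives a \emph{bounded} $r$-Lipschitz constant for $L$, which is exactly what makes the integral comparison between a BV supersolution and the AC solution of the Carath\'eodory equation survive the passage through a possibly singular defect measure $d\phi$.
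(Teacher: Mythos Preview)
Your approach is genuinely different from the paper's and, with one correction, it works. The paper avoids the comparison step altogether: using the explicit solution formula \eqref{app:cara_2} it rewrites $J(\xi)=(\mu_\xi(a)-1)u+\int_a^b\mu_\xi(s)\,L_0(s,\xi,\dot\xi)\,ds$ with the weight $\mu_\xi(s)=\exp\bigl(\int_s^b\widehat{L_u^\xi}\,dr\bigr)$, which by (L3) is uniformly bounded and equi-Lipschitz along any minimizing sequence. Ascoli--Arzel\`a then gives uniform convergence of the weights, and a standard lower-semicontinuity theorem for integrands convex in the velocity (applied with $\alpha=(\mu_{\xi_k},\xi_k)$ strong and $\beta=\dot\xi_k$ weak) finishes the argument. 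In effect the paper absorbs the implicit coupling into a positive multiplicative weight and never needs to identify the limit of $u_{\xi_n}$ or compare it with $u_\xi$.

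Your route---Helly for $u_{\xi_n}$, Ioffe with the frozen limit $\tilde u$, then a sub/supersolution comparison---is more transparent about where the difficulty lies, but the final step has a gap. From the Volterra identity $w(s)=\phi(s)+\int_a^s\widehat{L_r}\,w\,dr$ (this is an equality, not an inequality) one gets, after integration by parts, $w(s)=\int_{[a,s]}e^{\int_r^s\widehat{L_r}}\,d\phi(r)$. To conclude $w\geqslant0$ you need $d\phi\geqslant0$, i.e.\ $\phi$ \emph{nondecreasing}, not merely $\phi\geqslant0$: since (L3) only gives $|\widehat{L_r}|\leqslant K$ with no sign control, a nonnegative $\phi$ that decreases on some subinterval can easily produce $w(b)<0$. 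The fix is immediate with the tools you already have: apply Ioffe's theorem on every subinterval $[s_1,s_2]$ (Helly's pointwise convergence at both endpoints turns the $\liminf$ into a genuine limit), obtaining $\tilde u(s_2)-\tilde u(s_1)\geqslant\int_{s_1}^{s_2}L(r,\xi,\dot\xi,\tilde u)\,dr$, hence $\phi$ is nondecreasing and the Stieltjes representation yields $w\geqslant0$ as desired. With this amendment your proof is complete; note also that the finiteness of $\int_a^b L(r,\xi,\dot\xi,\tilde u)\,dr$ (from Ioffe at $s=b$) together with (L3) is what guarantees $\xi\in\mathcal{A}$, so that $u_\xi$ is well defined in the first place.
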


\begin{Rem}\label{rem:J}
Notice that we can rewrite the functional $J$ as
\begin{equation}\label{eq:J1}
	J(\xi)=(e^{\int^b_a\widehat{L_u^{\xi}}\ dr}-1)u+\int^b_ae^{\int^b_\tau\widehat{L_u^{\xi}}\ dr}L_0(\tau,\xi(\tau),\dot{\xi}(\tau))\ d\tau
\end{equation}
in spirit of \eqref{app:cara_2} and the fact $J(\xi)=u_{\xi}(b)-u$. We set $\mu_{\xi}(s):=e^{\int^b_s\widehat{L_u^{\xi}}\ dr}$. Therefore $J(\xi)=J_1(\xi)+J_2(\xi)$ where
\begin{align*}
	J_1(\xi)=(\mu_{\xi}(a)-1)u,\quad J_2(\xi)=\int^b_a\mu_{\xi}(\tau)L_0(\tau,\xi(\tau),\dot{\xi}(\tau))\ d\tau.
\end{align*}
\end{Rem}

\begin{proof}
	Fix $x,y\in\R^n$, $b>a$ and $u\in\R$. Consider any minimizing sequence $\{\xi_k\}$ for $J$, that is, a sequence such that $J(\xi_k)\to\inf\{J(\xi):\xi\in\mathcal{A}\}$ as $k\to\infty$. We want to show that this sequence admits a cluster point which is the required minimizer. Notice there exists an associated sequence $\{u_{\xi_k}\}$ given by \eqref{eq:app_caratheodory_L} in the definition of $J(\xi_k)$. The idea of the proof is standard but a little bit different from the classical proof of Tonelli's existence theorem.
	
	First, notice that Lemma \ref{equi_integrable} implies that the sequence of derivatives $\{\dot{\xi}_k\}$ is equi-integrable. Since the sequence $\{\dot{\xi}_k\}$ is equi-integrable, by the Dunford-Pettis Theorem there exists a subsequence, which we still denote by $\{\dot{\xi}_k\}$, and a function $\eta^*\in L^1([a,b],\R^n)$ such that $\dot{\xi}_k\rightharpoonup\eta^*$ in the weak-$L^1$ topology. The equi-integrability of $\{\dot{\xi}_k\}$ implies that the sequence $\{\xi_k\}$ is equi-continuous and uniformly bounded. Invoking the Ascoli-Arzela theorem, we can also assume that the sequence $\{\xi_k\}$ converges uniformly to some absolutely continuous function $\xi_{\infty}\in\Gamma^{a,b}_{x,y}$. For any test function $\varphi\in C^1_0([a,b],\R^n)$,
	\begin{align*}
		\int^b_a\varphi\eta^* ds=\lim_{k\to\infty}\int^b_a\varphi\dot{\xi}_k ds=-\lim_{k\to\infty}\int^b_a\dot{\varphi}\xi_k ds=-\int^b_a\dot{\varphi}\xi_{\infty} ds.
	\end{align*}
	By the du Bois-Reymond lemma (see, for instance, \cite[Lemma 6.1.1]{Cannarsa_Sinestrari_book}), we conclude that $\dot{\xi}_{\infty}=\eta^*$ almost everywhere. In View of Remark \ref{rem:J} and condition (L3), we also have that the sequence $\{\mu_{\xi_k}\}$ is bounded and equi-continuous. Therefore, $\mu_{\xi_k}$ converges uniformly to $\mu_{\xi}$ as $k\to\infty$ by taking a subsequence if necessary.
	
	We recall a classical result (see, for instance, \cite[Theorem 3.6]{Buttazzo_Giaquinta_Hildebrandt_book} or \cite[Section 3.4]{Buttazzo_book}) on the sequentially lower semicontinuous property on the functional
	\begin{align*}
		L^1([a,b],\R^m)\times L^1([a,b],\R^n)\ni (\alpha,\beta)\mapsto\mathbf{F}(\alpha,\beta):=\int^b_a\mathbf{L}(\alpha(s),\beta(s))\ ds.
	\end{align*}
	One has that if
	\begin{inparaenum}[(i)]
	\item $\mathbf{L}$ is lower semicontinuous;
	\item $\mathbf{L}(\alpha,\cdot)$ is convex on $\R^n$,
	\end{inparaenum}
	then the functional $\mathbf{F}$ is sequentially lower semicontinuous on the space $L^1([a,b],\R^m)\times L^1([a,b],\R^n)$ endowed with the strong topology on $L^1([a,b],\R^m)$ and the weak topology on $L^1([a,b],\R^n)$.
	
	Now, let
	\begin{align*}
		\mathbf{L}(\mu_{\xi_k}(s),\xi_k(s),\dot{\xi_k}(s)):=\mu_{\xi_k}(s)L_0(s,\xi_k(s),\dot{\xi}_k(s))
	\end{align*}
	with $\alpha_{\xi_k}(s)=(\mu_{\xi_k}(s),\xi_k(s))$ and $\beta_{\xi_k}(s)=\dot{\xi_k}(s)$. Then $J_2$ is lower semi-continuous in the topology mentioned above. The lower semi-continuity of $J_1$ is obvious (in fact, $J_1$ is continuous).  Therefore, $\xi_{\infty}\in\mathcal{A}$ is a minimizer of $J$ and this completes the proof of the existence result.
\end{proof}

\begin{Cor}\label{bound_1}
	There exists  a continuous function $F:[0,+\infty)\times[0,+\infty)\to[0,+\infty)$ depending on $u$, with $F(r_1,r_2)$ nondecreasing in both variables and superlinear with respect to $r_2$, such that every minimizer $\xi\in\mathcal{A}$ for \eqref{eq:app_fundamental_solution} satisfies
	\begin{align*}
		\int^b_a|\dot{\xi}(s)|\ ds\leqslant tF(t,R/t)
	\end{align*}
and
	\begin{align*}
		\operatorname*{ess\ inf}_{s\in[a,b]}|\dot{\xi}(s)|\leqslant F(t,R/t),\quad\sup_{s\in[a,b]}|\xi(s)-x|\leqslant tF(t,R/t).
	\end{align*}
\end{Cor}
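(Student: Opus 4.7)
The plan is to obtain all three estimates as direct consequences of Lemma \ref{equi_integrable} (combined with Lemma \ref{u_bound}), exploiting the fact that a minimizer $\xi$ of $J$ belongs to $\mathcal{A}_{\varepsilon}$ for every $\varepsilon>0$. More concretely, if $\xi$ is a minimizer then $J(\xi)=\inf_{\eta\in\mathcal{A}}J(\eta)$ so that $\xi\in\mathcal{A}_{\varepsilon}$ for any $\varepsilon>0$, and the estimates in Lemma \ref{equi_integrable} apply with the $\varepsilon$-term vanishing in the limit $\varepsilon\to 0^{+}$.

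First I would invoke Lemma \ref{equi_integrable} directly, which gives
\begin{align*}
    \int^{b}_{a}|\dot{\xi}(s)|\ ds\leqslant tF(t,R/t)+2e^{Kt}(1+tK)\varepsilon
\end{align*}
for every $\varepsilon>0$. Sending $\varepsilon\to 0^{+}$ yields the first bound with the same continuous function $F$, whose monotonicity and superlinearity in the second variable are exactly those inherited from Lemma \ref{equi_integrable}. I would simply relabel the resulting $F$ (possibly enlarged by an absolute constant) to package all three statements under one single function.

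Next, the essential infimum bound is immediate: by the mean value inequality,
\begin{align*}
    \operatorname*{ess\ inf}_{s\in[a,b]}|\dot{\xi}(s)|\cdot t\leqslant\int^{b}_{a}|\dot{\xi}(s)|\ ds\leqslant tF(t,R/t),
\end{align*}
hence $\operatorname*{ess\ inf}_{s\in[a,b]}|\dot{\xi}(s)|\leqslant F(t,R/t)$. For the sup estimate I would use $\xi(a)=x$ and the fundamental theorem of calculus for absolutely continuous functions:
\begin{align*}
    \sup_{s\in[a,b]}|\xi(s)-x|\leqslant\int^{b}_{a}|\dot{\xi}(r)|\ dr\leqslant tF(t,R/t).
\end{align*}

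There is no real obstacle here; the proof is essentially a packaging exercise. The only mild point to monitor is ensuring that one and the same $F$ (with the required monotonicity in both variables and superlinearity in the second) dominates all three quantities simultaneously — this can be arranged by taking the pointwise maximum of the various $F$'s produced by Lemmas \ref{u_bound} and \ref{equi_integrable} (all of which already have the correct qualitative dependence on $t$ and $R/t$).
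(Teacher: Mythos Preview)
Your proposal is correct and follows essentially the same approach as the paper: the paper derives the first bound directly from Lemma \ref{equi_integrable} (implicitly using that a minimizer lies in every $\mathcal{A}_{\varepsilon}$, as you make explicit), and then obtains the remaining two estimates from the elementary relations $\operatorname*{ess\ inf}_{s\in[a,b]}|\dot{\xi}(s)|\leqslant\frac{1}{t}\int^b_a|\dot{\xi}|\,ds$ and $|\xi(s)-x|\leqslant\int^b_a|\dot{\xi}|\,ds$.
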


\begin{proof}
	The first assertion is a direct consequence of Lemma \ref{equi_integrable}. The last two inequalities follow from the relations
	\begin{align*}
		\operatorname*{ess\ inf}_{s\in[a,b]}|\dot{\xi}(s)|\leqslant\frac 1t\int^b_a|\dot{\xi}(s)|\ ds,\quad\text{and}\quad |\xi(s)-x|\leqslant\int^b_a|\dot{\xi}(s)|\ ds,
	\end{align*}
	together with the first assertion.
\end{proof}

\section{Herglotz' variational principle on manifolds}

In this section, we try to explain, under the assumptions (L1)-(L4), how to move the Herglotz' generalized variational principle to a closed, connected $n$-dimensional smooth manifold $M$ without boundary. We continue to use the notations $u,t,K,c_{0}$ defined in Appendix B.

Once and for all, we fix a auxiliary Riemannian metric $g$ on $M$ and denote $d_{g}$ the distance induced by $g$. First, we notice that conditions (L1)-(L4) can be adapt to $L:\R\times TM\times\R\to\R$, only differences are:
\begin{itemize}[--]
  \item (L1) is restated as $L(t,x,\cdot,r)$ is strictly convex on $T_{x}M$ for any fixed $(t,x,r)$;
  \item the norms on $\R^{n}$ is replaced by $|\cdot|_{g}$ defined by $g$.
\end{itemize}

Let $\{(B_i,\Phi_i)\}$ be a $C^{2}$ atlas for $M$. Assume that $\{B_i\}_{i=1}^N$ is a finite open cover of $M$, where $\Phi_i:B_i\to\mathbb{D}^n$ is a $C^2$-diffeomorphism for each $i\in\{1,\ldots,N\}$ and $\mathbb{D}^{n}$ denotes the $n$-dimensional unit disc. Thus $\Phi_j^{-1}\circ\Phi_i:B_i\cap B_j\to B_i\cap B_j$ is a $C^2$-diffeomorphism for each pair $i,j$. Let $L(t,x,v,r)$ be a Lagrangian that satisfies (L1)-(L3) together with (L4) or (L4'), for fixed $i$, let $B=B_i$ and $\Phi=\Phi_i:B\to\mathbb{D}^{n}$ be a corresponding local coordinate, then
\begin{align*}
	(\Phi,d\Phi):TB\to\mathbb{D}^{n}\times\R^n
\end{align*}
defines a local trivialization of $TB$ and $L_{\Phi}:\R\times\mathbb{D}^{n}\times\R^n\times\R\to\R$ defined as
\begin{align*}
	L_{\Phi}(t,\bar{x},\bar{v},u)=L(t,\Phi^{-1}(\bar{x}),d\Phi^{-1}(\bar{x})\bar{v},u),\quad (\bar{x},\bar{v})\in \mathbb{D}^{n}\times\R^n,\ u\in\R
\end{align*}
is a representation of $L$ in $(B,\Phi)$. By the local representation performed above, Herglotz' generalized variational principle for $L$ restricted on some local chart $(B,\Phi)$ is equivalent to that for $L_{\Phi}$ on $[a,b]\times\mathbb{D}^{n}\times\R^n\times\R\to\R$ if $\Phi$ is a bi-Lipschitz homeomorphism and a $C^2$-diffeomorphism.

From now on, we fix $a,b\in\R$. Let $x,y\in M$ and $u\in\R$, choosing $\xi\in\Gamma^{a,b}_{x,y}(M)$, we consider the Carath\'eodory equation
\begin{equation}\label{eq:app_caratheodory_L2}
	\begin{cases}
		\dot{u}_{\xi}(s)=L(s,\xi(s),\dot{\xi}(s),u_{\xi}(s)),\quad a.e.\ s\in[a,b],&\\
		u_{\xi}(a)=u.&
	\end{cases}
\end{equation}
Similarly, we define the action functional
\begin{equation}\label{eq:app_fundamental_solution2}
	J(\xi):=\int^b_aL(s,\xi(s),\dot{\xi}(s),u_{\xi}(s))\ ds,
\end{equation}
where $\xi\in\Gamma^{a,b}_{x,y}(M)$ and $u_{\xi}$ is defined in \eqref{eq:app_caratheodory_L2}. Our purpose is to minimize $J(\xi)$ over
\begin{align*}
	\mathcal{A}(M)=\{\xi\in\Gamma^{a,b}_{x,y}(M): \text{\eqref{eq:app_caratheodory_L2} admits an absolutely continuous solution $u_{\xi}$}\}.
\end{align*}
Notice that $\mathcal{A}(M)\not=\varnothing$ because it contains all piecewise $C^1$ curves connecting $x$ to $y$.
Moreover, (L2) implies that any $\xi\in\mathcal{A}(M)$ is absolutely continuous, thus has finite length.

For a fixed $\kappa>0$, assume that $y\in B_{\kappa t}(x)$ and that $\eta\in\mathcal{A}(M)$ is a minimizer of the action functional $\eta\mapsto J(\eta)$. It is obvious that the estimates performed on $\R^{n}$ carry over to the manifold case, then there exist constants $C_1(\kappa,a,b)>0$, $C(u,a,b,\kappa)>0$ such that
\begin{equation}\label{bounds}
\eta(s)\in B_{C_1 t}(x)\ \text{ for }\ s\in[a,b],\quad|\dot{\eta}(s)|_{g}\leqslant C_1,\quad\sup_{s\in[a,b]}|u_{\eta}(s)|\leqslant C.
\end{equation}
The second inequality holds since we only use quantitative derivatives like $L_{u},L_{t}$ in the deduction of Erdmann condition and it can be carried over to the manifold case.

To begin the construction, we notice that there is $r>0$ such that for all $x\in M$, the geodesic ball $B_{r}(x)$ is bi-Lipschitz, $C^{2}$ diffeomorphic to $\mathbb{D}^{n}$ (some rescaling of $\Phi=\exp_{x}^{-1}$ shall give this diffeomorphism). We set $\kappa=\frac{\text{diam}(M)}{t}$, $C_1(\kappa,a,b)+1:=C_{2}(\kappa,a,b)$.

\medskip

\noindent\textbf{Local case}: Assume $2C_{2}t<r$ and $x,y\in B_{\frac{r}{2}}(x_{0})$ for some $x_{0}\in M$. By the discussion above, Herglotz' variational principle for $L$ restricted on the local chart $(B_{r}(x_{0}),\Phi)$ is equivalent to that for $L_{\Phi}$, satisfying all aforementioned assumptions, defined on $[a,b]\times\mathbb{D}^{n}\times\R^n\times\R\to\R$. Thus, by denoting
\begin{align*}
	\mathcal{B}(M)=\{\eta\in\mathcal{A}(M): \text{$\eta(s)\in B_{r}(x)$ for all $s\in[a,b]$}\},
\end{align*}
we can claim that
\begin{align*}
	\inf_{\mathcal{A}(M)}J(\xi)=\inf_{\mathcal{B}(M)}J(\xi)
\end{align*}
and they admit the same minimizers: by applying \eqref{bounds} for any minimizer $\eta$,
\begin{align*}
d_{g}(\eta(s),x_{0})\,&\leqslant d_{g}(\eta(s),x)+d_{g}(x,x_{0})\\
\,&\leqslant\int^{b}_{a}|\dot{\eta}(s)|_{g}\,ds+d_{g}(x,x_{0})\leqslant C_{1}\cdot t+\frac{r}{2}<r,
\end{align*}

We could formulate the conclusions from Section 2 and Appendix B into the following
\begin{Pro}\label{Main_Herglotz}
Assume $2C_{2}t<r$ and $x,y\in B_{\frac{r}{2}}(x_{0})$ for some $x_{0}\in M,\,\,\Phi:B:=B_{r}(x_{0})\rightarrow\mathbb{D}^{n}$ is a local chart at $x_{0}$, then
\begin{enumerate}[\rm (a)]
  \item The functional
	\begin{align*}
		\mathcal{A}(M)\ni\xi\mapsto J(\xi)=\int^b_aL(s,\xi(s),\dot{\xi}(s),u_{\xi}(s))\ ds,
	\end{align*}
	where $u_{\xi}$ is determined by \eqref{eq:app_caratheodory_L2} admits a minimizer on $\mathcal{A}(B)$.
	
  \vskip0.2cm
  \item Let $\xi\in\mathcal{A}(B)$ be a minimizer of $J$, then there is a function $F=F_{u,B}:[0,+\infty)\times[0,+\infty)\to[0,+\infty)$, with $F(\cdot,r)$ being nondecreasing for any $r\geqslant0$, such that
	\begin{equation*}
		|u_{\xi}(s)|\leqslant tF(a,b,\kappa)+G(t)|u|:=C(u,a,b,\kappa),\quad s\in[a,b]
	\end{equation*}
	where $G(t)>0$ is also nondecreasing in $t$.

  \vskip0.2cm
  \item Let $\xi\in\mathcal{A}(B)$ be a minimizer of $J$, then there is a function $F=F_{u,B}:[0,+\infty)\times[0,+\infty)\to[0,+\infty)$, with $F(\cdot,r)$ is nondecreasing for any $r\geqslant0$, such that
	\begin{align*}
		\operatorname*{ess\ sup}_{s\in[a,b]}|\dot{\xi}(s)|\leqslant F(a,b,\kappa):=C_{1}(a,b,\kappa).
	\end{align*}
	\item If $L$ is of class $C^2$, then for any minimizer $\xi$ for \eqref{eq:app_fundamental_solution2} we have
	\begin{enumerate}[\rm 1)]
	\item Both $\xi$ and $u_{\xi}$ are of class $C^2$ and $\xi$ satisfies Herglotz equation \eqref{eq:Herglotz} in local charts for all $s\in[a,b]$ where $u_{\xi}$ is the unique solution of \eqref{eq:app_caratheodory_L2};
	\item Let $p(s)=L_v(s,\xi(s),\dot{\xi}(s),u_{\xi}(s))$ be the dual arc, then $p$ is also of class $C^2$ and we conclude that $(\xi,p,u_{\xi})$ satisfies Lie equation \eqref{eq:Lie} in local charts for all $s\in[a,b]$.
\end{enumerate}
\end{enumerate}
\end{Pro}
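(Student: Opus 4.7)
The plan is to reduce everything to the Euclidean results of Section 2 and Appendix B via the local chart $\Phi:B=B_r(x_0)\to\mathbb{D}^n$, after first localizing the problem to $B$. To that end I first verify the key localization claim: under the hypothesis $2C_2 t<r$ and $x,y\in B_{r/2}(x_0)$, every minimizer $\eta$ of $J$ on $\mathcal{A}(M)$ is in fact an element of $\mathcal{A}(B)$, so that
\begin{align*}
\inf_{\mathcal{A}(M)}J(\xi)=\inf_{\mathcal{A}(B)}J(\xi),
\end{align*}
with the same minimizers. This is exactly the estimate displayed just before the statement: since $|\dot{\eta}|_g\leqslant C_1$ almost everywhere by \eqref{bounds}, one has $d_g(\eta(s),x_0)\leqslant C_1 t+r/2<r$. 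Once the problem has been confined to $B$, I push it through the trivialization $(\Phi,d\Phi)$, obtaining the Lagrangian $L_\Phi$ on $\R\times\mathbb{D}^n\times\R^n\times\R$. Because $\Phi$ is a bi-Lipschitz $C^2$-diffeomorphism onto a bounded set, $L_\Phi$ inherits (L1)--(L3) together with (L4) (or (L4')) from $L$: strict convexity in velocity is preserved under the linear isomorphism $d\Phi^{-1}(\bar{x})$, and the various growth/derivative bounds on $L_\Phi$ follow from those on $L$ because the Jacobian of $\Phi^{-1}$ and its derivatives are uniformly bounded on $\mathbb{D}^n$.

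Now each assertion drops out from the Euclidean theory applied to $L_\Phi$ on $\Phi(B)$, translated back by $\Phi^{-1}$:
\begin{itemize}[--]
\item For (a), I apply the existence result Proposition \ref{existence_intro} (equivalently Proposition \ref{existence}) to $L_\Phi$ with endpoints $\Phi(x)$ and $\Phi(y)$, then pull the minimizing curve back to $B$ via $\Phi^{-1}$. Since $\xi(s)=\Phi^{-1}(\bar{\xi}(s))$ remains in $B$ and $u_{\xi}$ is produced from the Carath\'eodory equation \eqref{eq:app_caratheodory_L2}, we obtain a minimizer of $J$ over $\mathcal{A}(B)$ (and hence, by the localization, over $\mathcal{A}(M)$).
\item For (b), Proposition \ref{a_priori} gives the bound on $|u_{\bar{\xi}}|$ in the chart in terms of $b-a$, the Euclidean distance $|\Phi(x)-\Phi(y)|$, and $u$. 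The bi-Lipschitz constant of $\Phi$ converts this diameter into $\kappa t$, yielding the required estimate $|u_\xi(s)|\leqslant tF(a,b,\kappa)+G(t)|u|$.
\item For (c), Theorem \ref{Lip} applied to $L_\Phi$ yields an essential sup bound on $|\dot{\bar{\xi}}|$, and multiplication by $\|d\Phi^{-1}\|_\infty$ on $\mathbb{D}^n$ (which is finite) produces the required bound on $|\dot{\xi}|_g$.
\item For (d), assuming $L\in C^2$, Proposition \ref{Herglotz_Lie} applied to $L_\Phi$ gives that $\bar{\xi}$ and $u_{\bar{\xi}}$ are of class $C^2$, satisfy the Herglotz equation \eqref{eq:Herglotz}, and that the dual arc satisfies the Lie equation \eqref{eq:Lie} in the chart. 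Pulling back by $\Phi^{-1}$, which is $C^2$, these regularity and equational statements transfer to $\xi$, $u_\xi$ and $p$ on $B$.
\end{itemize}

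The main obstacle I expect is bookkeeping rather than a conceptual hurdle: one must be sure that (L1)--(L4) (and (L4') when relevant) genuinely pass to $L_\Phi$ with controllable constants, and that the a priori estimates obtained on the chart side can be translated back to $g$-estimates on $M$ without losing the uniform dependence on $(a,b,\kappa)$. This is why the hypothesis $2C_2 t<r$ and the bi-Lipschitz nature of $\Phi$ are essential: they guarantee that the minimizer never reaches the boundary of the chart, so the Euclidean estimates on $\mathbb{D}^n$ really do bound the manifold quantities. Once this verification is set up, the four conclusions are direct translations of the results already established in the paper.
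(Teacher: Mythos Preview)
Your proposal is correct and follows essentially the same approach as the paper: the paper does not give a separate proof of Proposition~\ref{Main_Herglotz} but presents it as a summary of the Euclidean results of Section~2 and Appendix~B, transferred to the chart $(B,\Phi)$ after the localization $\inf_{\mathcal{A}(M)}J=\inf_{\mathcal{B}(M)}J$ has been justified (which is exactly the $d_g(\eta(s),x_0)\leqslant C_1t+r/2<r$ computation you reproduce). Your item-by-item invocation of Proposition~\ref{existence}, Proposition~\ref{a_priori}, Theorem~\ref{Lip}, and Proposition~\ref{Herglotz_Lie} for $L_\Phi$, followed by pullback along $\Phi^{-1}$, is precisely the mechanism the paper intends.
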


\medskip

\noindent\textbf{General case}: This is just the standard ``broken geodesic'' argument. Let $\{(B_i,\Phi_i)\}_{i=1}^N$  be an atlas of $M$ such that $B_{i}=B_{\frac r2}(x_{i})$ and $\{x_{i}\}_{1\leqslant i\leqslant N}$ forms a $\frac{r}{2}$-net on $M$. Without loss of generality, we assume that $x\in B_1$ and $y\in B_N$. Let $\xi\in\mathcal{A}(M)$ be a minimizer which is necessarily to be $C_1$-Lipschitz as the \emph{a priori} estimate shown. Then, there exists a partition $a=t_0<t_1<t_2<\cdots<t_{k-1}<t_k=b$ such that $0\leqslant t_{j+1}-t_{j}\leqslant\frac{r}{2C_{2}}$ and $z_j=\xi(t_j)$ and $z_{j+1}=\xi(t_{j+1})$ are contained in the same $B_i$.
Thus applying Proposition \ref{Main_Herglotz}, as a minimizer of $J$ on $\Gamma^{t_j,t_{j+1}}_{z_j,z_{j+1}}$, $\xi\vert_{[t_j,t_{j+1}]}$ falls in $B_{r}(x_i)$, which reduce the problem to the local case. For each $j$, we define
\begin{align*}
	h_L^j(t_j,t_{j+1},z_j,z_{j+1},u_j)=\inf_{\xi_j}\int^{t_{j+1}}_{t_j}L(s,\xi_j(s),\dot{\xi}_j(s),u_{\xi_j}(s))\ ds,
\end{align*}
where $\xi_j$ is an absolutely continuous curve constrained in $B_{r}(x_i)$ connecting $z_j$ to $z_{j+1}$ and $u_{\xi_j}$ is uniquely determined by \eqref{eq:app_caratheodory_L} with initial condition $u_j$. Now we consider the problem
\begin{equation}\label{eq:conjunction}
	g(a,b,x,y,u):=\inf\sum^k_{j=1}h_L^j(t_j,t_{j+1},z_j,z_{j+1},u_j),
\end{equation}
where the infimum is taken over partitions $a=t_0<t_1<t_2<\cdots<t_{k-1}<t_k=b$ with $t_{j+1}-t_{j}\in[0,\frac{r}{2C_{2}}]$, $z_j,z_{j+1}\in M$ contained in the some $B_i$ and $u_j\in\R$. Due to Proposition \ref{Main_Herglotz} (b), $\{u_j\}$ can be constrained in a compact subset of $\R$ depending only on $u$, $x,y$ and $t$. Therefore the infimum in \eqref{eq:conjunction} can be attained. Thanks to the local semiconcavity of the fundamental solution $h_L^j$, $h_L^j$ is differentiable at each minimizer which leads to the fact
\begin{align*}
	h_L(a,b,x,y,u)=g(a,b,x,y,u).
\end{align*}

\begin{Pro}\label{Main_Herglotz2}
The conclusion of Proposition \ref{Main_Herglotz} holds for any connected and closed $C^2$ manifold $M$ for all $a<b$.
\end{Pro}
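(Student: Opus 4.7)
The plan is to carry out the ``broken geodesic'' scheme sketched just before the statement, using Proposition \ref{Main_Herglotz} as a black box on each piece. First, I fix a finite atlas $\{(B_i,\Phi_i)\}_{i=1}^N$ with $B_i=B_{r/2}(x_i)$ and centers $\{x_i\}$ forming an $r/2$-net of $M$, where $r>0$ is chosen so that each $B_r(x_i)$ is bi-Lipschitz $C^2$-diffeomorphic to $\mathbb{D}^n$ via a suitable rescaling of $\exp_{x_i}^{-1}$. On each such ball, Herglotz' problem for $L$ is equivalent to that for the pushforward $L_{\Phi_i}$ on $\mathbb{D}^n$, so Proposition \ref{Main_Herglotz} applies directly in each chart.

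For part (a), I would repeat the Tonelli-style argument of Proposition \ref{existence} in the Riemannian setting: the a priori bounds \eqref{bounds}, which depend only on (L1)--(L3) and on $d_g$, show that any minimizing sequence $\{\xi_k\}\subset\mathcal{A}(M)$ has equi-integrable derivatives in any local trivialization; compactness of $M$ together with Ascoli--Arzel\`a then yields a uniformly convergent subsequence with limit $\xi_\infty\in\Gamma^{a,b}_{x,y}(M)$ whose derivative, read in coordinates, is the weak-$L^1$ limit of $\dot{\xi}_k$. The lower semicontinuity argument from Remark \ref{rem:J} and Proposition \ref{existence} is local, and, combined with the uniform convergence of the exponential weight $\mu_{\xi_k}$, transfers chart-by-chart to give $J(\xi_\infty)\leqslant\liminf_k J(\xi_k)$.

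For (b), (c), the strategy is that the crude bounds \eqref{bounds} already furnish a global Lipschitz estimate $|\dot{\xi}|_g\leqslant C_1$ for any minimizer $\xi$, so I may choose a partition $a=t_0<\cdots<t_k=b$ with $t_{j+1}-t_j\leqslant r/(2C_2)$, ensuring that each piece $\xi|_{[t_j,t_{j+1}]}$, together with its endpoints $z_j,z_{j+1}$, lies in a common ball $B_r(x_{i(j)})$. Each piece is itself a minimizer of the local Herglotz problem with Carath\'eodory initial datum $u_j=u_\xi(t_j)$; since Proposition \ref{Main_Herglotz} (b) bounds the $\{u_j\}$ uniformly in a compact subset of $\R$ depending only on $u$, $x$, $y$ and $t$, the sharp local estimates (b) and (c) of Proposition \ref{Main_Herglotz} apply to each subinterval and, being chart-independent in their scalar form, combine into the claimed global bounds. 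Part (d) then follows chart-by-chart from Proposition \ref{Main_Herglotz} (d); $C^2$-matching across the interior nodes $t_j$ is obtained by invoking uniqueness of the Cauchy problem for the Lie system \eqref{eq:Lie} with initial contact datum $(\xi(t_j),p(t_j),u_\xi(t_j))$, which glues the adjacent pieces into one $C^2$ solution.

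The main obstacle is to justify the identity $h_L(a,b,x,y,u)=g(a,b,x,y,u)$ from \eqref{eq:conjunction}, which is what licenses the whole broken-geodesic reduction. One inequality is trivial since any broken configuration yields an admissible curve. The converse requires that the auxiliary ``jump'' parameters $\{u_j\}$ (a priori free in the definition of $g$) be pinned by a true trajectory; this is where the local semiconcavity of each $h_L^j$ (hence differentiability of $h_L^j$ at each minimizer) is used, together with the a priori compactness of $\{u_j\}$ coming from Proposition \ref{Main_Herglotz} (b). Once this identification is in hand, all remaining issues are routine localization.
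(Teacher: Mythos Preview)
Your proposal is correct and follows the same broken-geodesic reduction as the paper's ``General case'' discussion. The one genuine difference is in how existence (part (a)) is obtained. You redo the Tonelli compactness argument directly on $M$, using the chart-free equi-integrability of $|\dot\xi_k|_g$ coming from (L2)--(L3) and compactness of $M$ for Ascoli--Arzel\`a, then pass to charts only for the weak-$L^1$ lower semicontinuity step. The paper instead bypasses a manifold Tonelli argument: it takes the finite-dimensional broken functional $g$ from \eqref{eq:conjunction}, shows its infimum is attained by compactness of the parameter space $(t_j,z_j,u_j)$ (the $u_j$ sitting in a compact set by Proposition \ref{Main_Herglotz} (b)), and then uses local semiconcavity of $h_L^j$ to identify $h_L=g$, which simultaneously yields existence and the broken decomposition of the minimizer. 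Your route is more direct and keeps existence chart-independent; the paper's route trades the infinite-dimensional compactness step for a finite-dimensional one at the cost of the $h_L=g$ identification. One remark: once you have existence by your Tonelli argument and the restriction/dynamic-programming principle making each $\xi|_{[t_j,t_{j+1}]}$ a local minimizer, the identity $h_L=g$ is no longer needed for any of (a)--(d); it becomes a corollary rather than, as you call it, ``the main obstacle.'' In your scheme the actual crux is the global Erdmann condition (which, as the paper notes, uses only the chart-free scalars $L_t$, $L_u$, $E_0$ and time reparametrisation), since that is what furnishes the Lipschitz bound $|\dot\xi|_g\leqslant C_1$ enabling the partition in the first place.
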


\bibliographystyle{abbrv}
\bibliography{mybib}

\end{document}